\title[Linear Algebra and Solvability of PDE]{Infinite-Dimensional Linear Algebra and Solvability of Partial Differential Equations}
\author{Todor D. Todorov}
\address{Professor Emeritus,                
	California Polytechnic State University\\
	San Luis Obispo, California 93407, USA.}
\email{ttodorov@calpoly.edu}
\urladdr{https://math.calpoly.edu/todor-todorov}
\keywords{Algebraic basis, Hamel basis, algebraic dual, dual operator, linear partial differential operator, linear partial differential equation, solvability, existence of a solution, generalized solution, Schwartz distribution}
\dedicatory {The article is dedicated to Professor Michael Oberguggenberger on occasion of his 65th Birthday}
\newcommand{\bra}{\ensuremath{\langle}}
\newcommand{\ket}{\ensuremath{\rangle}}
\newcommand{\spm}{{\ensuremath{\rm sp}}}
\newcommand{\supp}{\ensuremath{{\rm{supp}}}}
\newcommand{\card}{\ensuremath{{\rm{card}}}}
\newcommand{\ran}{\ensuremath{{\rm{ran}}}}
\newcommand{\rest}{\!\ensuremath{\upharpoonright}\!}
\newcommand{\Diff}{\ensuremath{{\rm{Diff}}}}
\newcommand{\ifff}{\ensuremath{{\rm{\,iff\;}}}}
\newlist{A-enum}{enumerate}{1}
\newlist{T-enum}{enumerate}{2}
\newlist{L-enum}{enumerate}{2}
\newlist{C-enum}{enumerate}{2}
\newlist{P-enum}{enumerate}{2}  
\newlist{Pf-enum}{enumerate}{2} 
\newlist{D-enum}{enumerate}{2}
\newlist{Ex-enum}{enumerate}{2}
\newlist{Exs-enum}{enumerate}{2}
\newlist{E-enum}{enumerate}{2}
\newlist{R-enum}{enumerate}{2}
\setlist[A-enum,1]{label=(\alph*),format=\bfseries\emph,leftmargin=*,labelindent=.1\parindent}
\setlist[T-enum,1]{label=(\roman*),format=\bfseries\emph,leftmargin=*,labelindent=.1\parindent}
\setlist[T-enum,2]{label=(\alph*),format=\bfseries\emph,leftmargin=*,labelindent=.1\parindent}
\setlist[L-enum,1]{label=(\roman*),format=\bfseries\emph,leftmargin=*,labelindent=.1\parindent}
\setlist[L-enum,2]{label=(\alph*),format=\bfseries\emph,leftmargin=*,labelindent=.1\parindent}
\setlist[C-enum,1]{label=(\roman*),format=\bfseries\emph,leftmargin=*,labelindent=.1\parindent}
\setlist[C-enum,2]{label=(\alph*),format=\bfseries\emph,leftmargin=*,labelindent=.1\parindent}
\setlist[P-enum,1]{label=(\roman*),format=\bfseries\emph,leftmargin=*,labelindent=.1\parindent}
\setlist[P-enum,2]{label=(\alph*),format=\bfseries\emph,leftmargin=*,labelindent=.1\parindent}
\setlist[Pf-enum,1]{label=(\roman*), leftmargin=*,labelindent=.1\parindent}
\setlist[Pf-enum,2]{label=(\alph*), leftmargin=*,labelindent=.1\parindent}
\setlist[D-enum,1]{label=\textbf{\arabic*.},leftmargin=*,labelindent=.2\parindent}
\setlist[D-enum,2]{label=\textbf{(\alph*)},leftmargin=*,labelindent=.1\parindent}
\setlist[Ex-enum,1]{label=\textbf{\arabic*.},leftmargin=*,labelindent=.15\parindent}
\setlist[Exs-enum,1]{label=\textbf{\arabic*.},leftmargin=*,labelindent=.15\parindent}
\setlist[E-enum,1]{label=\textbf{\arabic*.},leftmargin=*,labelindent=.15\parindent}
\setlist[R-enum,1]{label=\textbf{\arabic*.},leftmargin=*,labelindent=.15\parindent}
\theoremstyle{definition}
\newtheorem{definition}{Definition}[section]
\newtheorem{remark}[definition]{Remark}
\newtheorem{remarks}[definition]{Remarks}
\newtheorem{example}[definition]{Example}
\newtheorem{examples}[definition]{Examples}
\theoremstyle{plain}
\newtheorem{theorem}[definition]{Theorem}
\newtheorem{lemma}[definition]{Lemma}
\newtheorem{corollary}[definition]{Corollary}
\begin{document}

	\begin{abstract}    
		We discuss linear algebra of infinite-dimensional vector spaces in terms of algebraic (Hamel) bases. As an application we prove the surjectivity of a large class of linear partial differential operators with smooth ($\mathcal C^\infty$-coefficients) coefficients, called in the article \emph{regular}, acting on the algebraic dual $\mathcal D^*(\Omega)$ of the space of test-functions $\mathcal D(\Omega)$. The surjectivity of the partial differential operators guarantees solvability of the corresponding partial differential equations within $\mathcal D^*(\Omega)$. We discuss our result in contrast to and comparison with similar results about the restrictions of the regular operators on the space of Schwartz distribution $\mathcal D^\prime(\Omega)$, where these operators are often non-surjective. 
	\end{abstract}
	
	\maketitle
	


\section{Introduction}\label{S: Introduction}

In Section~\ref{S: Infinite-Dimensional Linear Algebra}-\ref{S: Linear Maps and Operators} we present the basic results of  \emph{infinite-dimensional linear algebra}, an old branch of mathematics initiated in1905 by Georg Hamel~\cite{gHamel}, dealing with infinite-dimensional vector spaces in terms of algebraic (Hamel) bases rather than topological or orthonormal Hilbert bases. The approach is mostly algebraic. In Theorem~\ref{T: Surjective Dual} we show that a linear operator is injective if and only if its dual operator is surjective; a result well-known for finite-dimensional vector spaces but less-known for infinite-dimensional spaces. This gives rise to the Definition~\ref{D: Regular Operators} of a \emph{regular linear operator} - a surjective operator on the dual space with injective co-dual. 

Several discussions of the earlier versions of this text convinced us that 
the algebraic (Hamel) bases have gradually been falling out of popularity in the last several decades. That is why the first part of the article (Section~\ref{S: Infinite-Dimensional Linear Algebra}-\ref{S: Linear Maps and Operators}) is written in somewhat tutorial manner, with many illustrative examples (Section~\ref{S: Examples of Infinite-Dimensional Spaces}). A reader who knows Theorem~\ref{T: Surjective Dual} from the finite-dimensional linear algebra and who believes in its validity for  infinite-dimensional vector spaces might skip reading the first several sections and start directly from Section~\ref{S: Linear Functionals in D*(Omega) as Generalized Functions: The Main Result}. 

In Sections~\ref{S: Linear Functionals in D*(Omega) as Generalized Functions: The Main Result} we apply infinite-dimensional linear algebra to the particular case of the vector space $\mathcal D(\Omega)$ and its algebraic dual $\mathcal D^*(\Omega)$. Here $\Omega$ is an open set of $\R^d$ in the \emph{usual topology of} $\R^d$. Somewhere in this section we abandon the realm of algebra and start involving concepts and methods from functional analysis and the theory of partial differential operators (H\"{o}rmander~\cite{lHor-I}-\cite{lHor-IV}). In particular, Definition~\ref{D: Regular Operators} (mentioned above) - if applied to $\mathcal D(\Omega)$ - gives rise to the concept of \emph{regular operator} \emph{with $\mathcal C^\infty$-coefficients}: a surjective linear operator   $P^*(x, \partial)$ with $\mathcal C^\infty$-coefficients acting on $\mathcal D^*(\Omega)$ which has an injective co-dual (transposed) operator $P(x, \partial)$ on $\mathcal D(\Omega)$. 

For readers \emph{without background in Schwartz theory of distributions}
(Vladimirov~\cite{vVladimirov}) who are otherwise interested in the main topic of our article, we offer a characterization of the space of Schwartz distributions  $\mathcal D^\prime(\Omega)$ as a particular subspace of  $\mathcal D^*(\Omega)$ without the usual involvement of the strong topology on the space of test-functions $\mathcal D(\Omega)$ (Section~\ref{S: Schwartz Distributions within D*(Omega)}). We  shortly outline a \emph{sequential approach to distribution theory} based on our characterization (Remark~\ref{R: Sequential Approach to Distribution Theory}). Thus, the dilemma $\mathcal D^\prime(\Omega)$ vs. $\mathcal D^*(\Omega)$ - discussed in Section~\ref{S: Examples of Regular Operators on D*(Omega): Solvable Partial Differential Equations} - can be followed by readers \emph{without strong (or any) background in Schwartz theory of distributions}.

In Section~\ref{S: Examples of Regular Operators on D*(Omega): Solvable Partial Differential Equations} we identify several subclasses of linear partial differential operators in mathematics (H\"{o}rmander~\cite{lHor-I}-\cite{lHor-IV}) as \emph{regular} (thus, surjective on $\mathcal D^*(\Omega)$) which include the following:
\begin{itemize}
	\item All linear partial differential operators \emph{with constant coefficients} are regular.
	
	\item  The \emph{H. Lewy~\cite{hLewy57} operator}: $L^*(x, \partial)=\frac{\partial}{\partial x_1}+i \frac{\partial}{\partial x_2}-2i(x_1+ix_2)\frac{\partial}{\partial x_3}$ is regular.
	
	\item \emph{All second order elliptic operators with $\mathcal C^\infty$-coefficients} are regular.
	
	\item \emph{All elliptic operators with analytic coefficients} are regular.
\end{itemize} 
All of these operators are surjective on $\mathcal D^*(\Omega)$, but not necessarily surjective on the following three invariant subspaces $\mathcal D(\Omega)$, $\mathcal E(\Omega)$ and $\mathcal D^\prime(\Omega)$ (Section~\ref{S: Three Invariant Subspaces}).
Consequently, we prove the solvability of the  partial differential equations of the form $P^*(x, \partial)U=T$ in $\mathcal D^*(\Omega)$, for regular operators $P^*(x, \partial): \mathcal D^*(\Omega)\mapsto \mathcal D^*(\Omega)$. In other words, we prove the existence of a solution $U$ in  $\mathcal D^*(\Omega)$ for every choice of $T$ also  in $\mathcal D^*(\Omega)$. 
We should recall that:
\begin{itemize}
	
	\item Every linear partial differential operator with constant coefficients $P^*(\partial)$ on $\mathcal D^\prime(\R^d)$  is surjective; this is the famous existence theorem of Malgrange~\cite{bMal55-56} and Ehrenpreis~\cite{lEhren56}. 
	
	\item The Malgrange-Ehrenpreis existence result might, however, fail in $\mathcal D^\prime(\Omega)$ 
	for operators which are  \emph{hypoelliptic but not elliptic}, subsets $\Omega$ of $\R^d$ which are \emph{open, but not $P$-convex for supports} (H\"{o}rmander~\cite{lHor-II}, Theorem 10.6.6, Corollary 10.6.8). Thus, the partial differential equation $P^*(\partial)U=f$ might have no solutions in $\mathcal D^\prime(\Omega)$ even for some smooth $f$.
	
	\item Hans Lewy~\cite{hLewy57} was the first to show that the Lewy operator $L^*(x, \partial)$ is not surjective on  $\mathcal D^\prime(\R^3)$. Thus, the partial differential equations of the form $L^*(x, \partial)U=\varphi$  might fail to have solution $U$ in $\mathcal D^\prime(\R^3)$ even for some test-functions $\varphi\in\mathcal D(\R^3)$. A general existence result also fails in the space of hyperfunctions (Schapira~\cite{pSchapira67}).
	
	\item The elliptic operators mentioned above are,  in general, also non-surjective on $\mathcal D^\prime(\Omega)$.
\end{itemize}

In Section~\ref{S: Standardization of a Non-Standard Result}  we show - with the help of Hamel bases - that the space of \emph{generalized distributions} $\widehat{\mathcal E}(\Omega)$ introduced in (Todorov~\cite{tdTod95}, \S 2) can be embedded as a $\C$-vector subspace into the algebraic dual $\mathcal D^*(\Omega)$ of the space of test-functions $\mathcal D(\Omega)$. Because $\widehat{\mathcal E}(\Omega)$ was defined in the framework of non-standard analysis (Robinson~\cite{aRob66}), we look upon $\mathcal D^*(\Omega)$ as a  \emph{standardization} of $\widehat{\mathcal E}(\Omega)$. Actually, our article itself can be viewed as a \emph{standardization} of the results in Todorov~\cite{tdTod95}, because the surjectivity of the regular operators was first proved in Todorov~\cite{tdTod95} in the framework of $\mathcal L(\widehat{\mathcal E}(\Omega))$, while the main result of this article (Theorem~\ref{T: The Main Result}) holds within $\mathcal L(\mathcal D^*(\Omega))$. Thus, by replacing $\widehat{\mathcal E}(\Omega)$ with $\mathcal D^*(\Omega)$, our result about the regular operators becomes accessible even for readers without background in non-standard analysis. Our \emph{standardization} is, of course, not an isolated event in mathematics; we remind two more  cases of standardizations in the history of mathematics.

Our \emph{inspiration comes from the fundamental theorem of algebra}: following this analogy the space $\mathcal D^\prime(\Omega)$ is the counterpart of the  \emph{field of real numbers} $\R$, the space $\mathcal D^*(\Omega)$ is the counterpart of the field of complex numbers $\C$, and the class of \emph{regular operator} is the counterpart of the \emph{ring of polynomials} $\C[x]$. We are trying to convince the reader that the space $\mathcal D^*(\Omega)$ - rather than $\mathcal D^\prime(\Omega)$ - deserves to be considered as the natural theoretical framework for the class of regular operators $P^*(x, \partial)$, since the equations of the form $P^*(x, \partial)U=T$ often have no solutions in $\mathcal D^\prime(\Omega)$. 

Recall as well that the global solvability of arbitrary analytic partial differential equations was studied in  (Rosinger~\cite{eRosinger90}, Chapter 2) and (Oberguggenberger~\cite{mOber92}, Section 22). The existence results for continuous partial differential operators are obtained by means of the Dedekind order completion method in Oberguggenberger \& Rosinger~\cite{OberRosinger94}.

A general solvability of evolution-type equations appears in Colombeau, Heibig and Oberguggenberger~\cite{ColHeibigMO93}, using \emph{regularized derivatives} in the framework of \emph{Colombeau algebra of generalized functions} (see Colombeau \cite{jCol85} and Oberguggenberger~\cite{mOber92}).  

As we mentioned above, the surjectivity of the regular operators was first proved in Todorov~\cite{tdTod95} in the framework of non-standard analysis. Meanwhile (in the period between the publication of Todorov~\cite{tdTod95} and the writing of this article) two more similar articles in the framework of $\mathcal D^*(\Omega)$ appeared: an unpublished manuscript Oberguggenberger~\& Todorov~\cite{MO&TDT} and Oberguggen-berger~\cite{MO13}. In this article we shall use some of the results in Oberguggenberger~\cite{MO13}.

On the topic of  \emph{solvability of differential equations} we refer to a relatively recent survey in Dencker~\cite{nDencker} (no connection with the space  $\mathcal D^*(\Omega)$).

Finally, we should mention that our article has somewhat an \emph{ideological edge} because we challenge at least \emph{two widely spread prejudices} in the mathematical community: \emph{The first one} is that \emph{Hamel bases are not and can never be mathematically useful}. \emph{The second one} is that we \emph{should never go beyond the space of Schwartz distributions} $\mathcal D^\prime(\Omega)$ as a framework of a partial differential equation, especially if the equation is linear. That is to say ``better to admit (perhaps with some regret) that a given equation has no solutions rather than look for a solution outside $\mathcal D^\prime(\Omega)$''.

\section{Notations and Set-Theoretical Framework}\label{S: Notations}

The \emph{set-theoretical framework} of this text  is the usual ZFC-axioms (Zermelo-Fraenkel axioms with the axiom of choice) along with the GCH (Generalized Continuum Hypothesis) in the form $2^\kappa=\kappa_+$ for every cardinal $\kappa$ (or equivalently, $2^{\aleph\alpha}=\aleph_{\alpha+1}$ for all ordinals $\alpha$). Here  we write $\kappa_+$ for the successor of $\kappa$. For the \emph{domain of ZFC and GCH axioms} we use the \emph{superstructures $\widehat{\mathcal S}$ with the set of individuals} $\mathcal S=\mathbb K\cup V$, where $V$ is the vector space over a field $\mathbb K$ under consideration (e.g. $V=\R^n$ with $\mathbb K=\R$ or $V=\C^n$ with $\mathbb K=\C$, etc.). Our formal language is based on bounded quantifiers of the form $(\forall x\in A)\alpha(x)$ and $(\exists x\in B)\beta(x)$, where $A, B\in \widehat{\mathcal S}\setminus \mathcal S$ and $\alpha(x)$ and $\beta(x)$ are predicates (Davis~\cite{mDavis}, p.11-15). We believe however, that the rest of this text can be followed without a familiarity with the concept of \emph{superstructure}.

We recall that the following are equivalent (Wolf~ \cite{rsWolf}, p. 255) and/or (Jech~\cite{tjJech}, Ch. 11, A1):
\begin{itemize}
	\item Axiom of Choice.
	\item Zorn's Lemma.
	\item Every set can be well-ordered.
	\item The usual partial order on the class of cardinal numbers is a total order. 
\end{itemize}
In particular, Zorn's Lemma will be involved in Theorem~\ref{T: Existence of Basis} and the total order between cardinals is needed in the proof of   Lemma~ \ref{L: Subspace Lemma}. Also, "every set can be well-ordered"   will be useful to supply a basis with well-ordering if desired (Remark~\ref{R: Well-Ordered Bases}). 

Actually, we do not need the GCH except for the purpose of simplifying the calculations with cardinals and the dimension of vector spaces. For example,  with the help of GCH,  $\aleph_0<\dim(V)\leq\mathfrak{c}$ implies simply $\dim(V)=\mathfrak{c}$ (rather than only $\dim(V)=\aleph_1$). Here $\aleph_0=\card(\mathbb N)$ and $\mathfrak{c}=\card(\R)$. 

If $X$ is a set, we shall treat $X$ as a subset of the \emph{power set} $\mathcal P(X)$, in symbols, $X\subset\mathcal P(X)$ by means of the embedding $x\mapsto \{x\}$. If $X$ and $Y$ are two sets, we denote by $Y^X$ the set of all functions from $X$ to $Y$. 

For \emph{index sets} (for indexing bases, for example)  we use the popular sets: $\mathbb N, \R, \R^d,$ $\mathcal P(\R),  \mathcal P(\R^d), \mathcal P(\mathcal P(\R))$, etc. with cardinalities $\aleph_0, \mathfrak{c}, \mathfrak{c}, \mathfrak{c}_+, \mathfrak{c}_+$, $(\mathfrak{c}_+)_+$, respectively. We sometimes use the field of scalars $\mathbb K$ itself as an index set or $\mathbb K^d, \mathcal P(\mathbb K),  \mathcal P(\mathbb K^d), \mathcal P(\mathcal P(\mathbb K))$, etc.

In what follows $V$ stands for a generic \emph{vector space over a field} $\mathbb K$ (Axler~\cite{sAxlerBook}). Sometimes we shall write  $V|\,\mathbb K$ instead of $V$. If we write  $U\overset{\centerdot}{\subseteq} V$, we mean that both $U$ and $V$ are vector spaces over the same field and $U$ is a vector subspace of $V$. Similarly,  $V \cong W$ means that $V$ and  $W$ are isomorphic vector spaces. $\mathcal L(V)$ denotes the $\mathbb K$-vector space consisting of all linear operators $L: V\mapsto V$. We denote by $V^*$ the algebraic dual of $V$. We denote by $T(v)$ or $\langle T, v\rangle$ the evaluation of  $T\in V^*$ at $v\in V$.

Let $\mathcal T^d$ denote the \emph{usual topology} on $\R^d$ and let $X, Y\in\mathcal T^d$ be two open set of $\R^d$. We denote by $\Diff(X, Y)$ the set of all \emph{diffeomorphisms} from $X$ to $Y$. If $\theta\in\Diff(X, Y)$, we denote by  $J_\theta: X\to\R$,  $J_\theta=\big|\det\big(\frac{\partial\theta}{\partial x}\big)\big|$, the corresponding \emph{Jacobian determinant}. We denote by $\Diff(X)$ the group of  \emph{diffeomorphisms} from $X$ to itself.

Let $\Omega$ stand for a (generic) open subset of $\R^d$. Here is a list of popular functional spaces and notations:

\begin{itemize}
	\item  $\mathcal E(\Omega)=\mathcal C^\infty(\Omega)$ denotes the space $\mathcal C^\infty$-functions from $\Omega$ to $\C$.
	
	\item $\mathcal D(\Omega)=\mathcal C_0^\infty(\Omega)$ denote the \emph{space of test-functions} on $\Omega$, i.e. the $\mathcal C^\infty$-functions from $\Omega$ to $\C$ with compact support (Vladimirov~\cite{vVladimirov}). 
	
	\item $\mathcal L^2(\Omega)$ denotes the usual Hilbert space of Lebesgue measurable square integrable functions from $\Omega$ to $\C$.
	
	\item $\mathcal L^\infty(\Omega)$ denotes the space of Lebesgue measurable bounded functions from $\Omega$ to $\C$.
	
	\item $\mathcal L_{loc}(\Omega)$ stands for the  Lebesgue measurable locally  integrable  functions from $\Omega$ to $\C$.

	\item $\mathcal D^\prime(\Omega)$ denotes the \emph{space of Schwartz distributions} on $\Omega$ (Vladimirov~\cite{vVladimirov}).
	
	\item $\mathcal E^\prime(\Omega)$ denotes the space of Schwartz distributions with compact support. 
	
	\item We denote by $\mathcal D^*(\Omega)$ and $\mathcal E^*(\Omega)$ the algebraic duals of $\mathcal D(\Omega)$ and $\mathcal E(\Omega)$, respectively.
\end{itemize}

\section{Infinite-Dimensional Linear Algebra}\label{S: Infinite-Dimensional Linear Algebra}

We recall the definitions of \emph{algebraic (Hamel) basis} and \emph{dimension} of an infinite-dimensional vector space (Hamel~\cite{gHamel}). For more details and the missing proofs we refer to Jacobson~\cite{nJacobsonLectures}, Hungerford~\cite{t.w.Hungerford}, Hewitt \&. Stromberg~\cite{HewittStromberg65} and Mackey~\cite{gwMackey45}.

\begin{definition}[Basis and Dimension] \label{D: Basis and Dimension} Let $V$ be a non-trivial vector space over a field $\mathbb K$. 
	\begin{D-enum}
		\item A subset $\mathcal B$ of $V$ is called \emph{free}  if every finite subset of $\mathcal B$ consists of linearly independent vectors in $V$.
		
		\item A free set $\mathcal B$ of $V$ is called \emph{maximal} (or, a \emph{maximal free set}) if $\mathcal B$ cannot be extended (properly) to a free set of $V$. Every maximal free set $\mathcal B$ of $V$ is called a \emph{algebraic basis}, \emph{Hamel basis} or simply, \emph{basis} of $V$. 
		
		\item If $\mathcal B$ is a basis of $V$, the \emph{dimension} (or \emph{Hamel dimension}) of $V$ is defined by  $\dim V=\card \,\mathcal B$.
	\end{D-enum}
\end{definition}

The definition of \emph{dimension} is justified by the following result.

\begin{lemma}[Justification] \label{L: Justification} All bases of $V$ have the same cardinality.
\end{lemma}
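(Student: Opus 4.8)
The plan is to argue by cases according to whether $V$ admits a finite basis. \textbf{Case 1: some basis is finite.} Suppose $\mathcal B=\{b_1,\dots,b_n\}$ is a basis and let $\mathcal C$ be any other basis. I would run the classical Steinitz exchange argument: given any finite free subset $\{c_1,\dots,c_m\}\subseteq\mathcal C$, one swaps the $c_i$ one at a time into the spanning set $\mathcal B$, each time removing some $b_j$, so that after $k$ steps $\{c_1,\dots,c_k\}\cup(\mathcal B\setminus\{\text{$k$ removed vectors}\})$ still spans $V$; this process cannot exhaust $\mathcal B$ before all $c_i$ are used (else some $c_i$ would be a combination of $c_1,\dots,c_{i-1}$), hence $m\le n$. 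Thus every finite subset of $\mathcal C$ has at most $n$ elements, so $\mathcal C$ is finite with $\card\mathcal C\le n$; and the same exchange argument with the roles reversed (using that $\mathcal C$ spans and $\mathcal B$ is free) gives $n\le\card\mathcal C$, so $\card\mathcal C=n=\card\mathcal B$.

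\textbf{Case 2: both $\mathcal B$ and $\mathcal C$ are infinite.} The key device is the finite-support map. Each $c\in\mathcal C$ has a \emph{unique} expansion as a finite $\mathbb K$-linear combination of vectors of $\mathcal B$ (uniqueness is immediate from the freeness of $\mathcal B$); let $S(c)\subseteq\mathcal B$ be the finite set of $b$'s occurring with nonzero coefficient. I claim $\bigcup_{c\in\mathcal C}S(c)=\mathcal B$. Indeed, if some $b_0\in\mathcal B$ were not in this union, then every $c\in\mathcal C$ would lie in the span of $\mathcal B\setminus\{b_0\}$; since $\mathcal C$ spans $V$, so would $b_0$ itself, contradicting the freeness of $\mathcal B$. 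Hence $\mathcal B$ is a union of $\card\mathcal C$ many finite sets, so $\card\mathcal B\le\card\mathcal C\cdot\aleph_0=\card\mathcal C$, the last equality being the standard cardinal identity $\kappa\cdot\aleph_0=\kappa$ for infinite $\kappa$. By the symmetric argument (interchanging $\mathcal B$ and $\mathcal C$) we also obtain $\card\mathcal C\le\card\mathcal B$, and then $\card\mathcal B=\card\mathcal C$ by the Cantor–Schr\"oder–Bernstein theorem (or, equivalently, by the totality of the order on cardinals).

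The step I expect to require the most care is the claim $\bigcup_{c\in\mathcal C}S(c)=\mathcal B$ together with the ensuing cardinal arithmetic: one must be precise that basis expansions are unique, that the supports genuinely exhaust $\mathcal B$, and that $\kappa\cdot\aleph_0=\kappa$ for infinite $\kappa$ (a standard consequence of the Axiom of Choice, already in force in this paper). I would also emphasize that the finite case cannot be absorbed into the infinite case — the estimate $\card\mathcal B\le\card\mathcal C\cdot\aleph_0=\card\mathcal C$ collapses when $\mathcal C$ is finite — so the case split is essential. Note that GCH plays no role here; only AC is used.
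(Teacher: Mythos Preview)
Your proof is correct and follows the standard textbook argument. The paper itself does not give a proof of this lemma at all: it simply refers the reader to Jacobson, Hungerford, and Hewitt--Stromberg, and your two-case argument (Steinitz exchange in the finite case; the finite-support covering plus $\kappa\cdot\aleph_0=\kappa$ in the infinite case) is precisely what one finds in those references. One minor remark on presentation: in the paper's ordering, the spanning/uniqueness property of a Hamel basis is stated as Lemma~\ref{L: Span}, which comes \emph{after} the present lemma, so if you want to be fully self-contained you should note (as you partly do) that existence of the expansion follows directly from maximality of $\mathcal B$ and uniqueness from freeness, rather than citing Lemma~\ref{L: Span}.
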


\begin{proof} We refer to (Jacobson~\cite{nJacobsonLectures}, Chapter 9, \S 2, p.240) or  (Hungerford~\cite{t.w.Hungerford}, Theorem 2.6, p.184), or  (Hewitt \&. Stromberg~\cite{HewittStromberg65}, Thm 4.58, p. 30).
\end{proof}

\begin{lemma}[Span]\label{L: Span} Let $\mathcal B$ be a basis of $V$. Then every non-zero vector $v\in V$ can be uniquely presented as a (finite) linear combination of vectors in $\mathcal B$ with non-zero coefficients in $\mathbb K$. We summarize this in $V={\rm span}\,\mathcal B$. Consequently, $\card\,V=\max\{\dim V,\, \card\, \mathbb K\}$.
\end{lemma}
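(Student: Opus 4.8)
The plan is to establish the three assertions in turn, each being a short unwinding of Definition~\ref{D: Basis and Dimension}. \emph{Spanning.} First I would show $V={\rm span}\,\mathcal B$. Fix a non-zero $v\in V$; the case $v\in\mathcal B$ is trivial, so assume $v\notin\mathcal B$. Then $\mathcal B\cup\{v\}$ properly extends $\mathcal B$, so by maximality of $\mathcal B$ as a free set it cannot be free: some finite subset of it is linearly dependent. Since every finite subset of $\mathcal B$ consists of linearly independent vectors, this dependent finite set must contain $v$; hence there are distinct $b_1,\dots,b_n\in\mathcal B$ and scalars $\lambda_0,\dots,\lambda_n\in\mathbb K$, not all zero, with $\lambda_0 v+\sum_{i=1}^n\lambda_i b_i=0$. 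Were $\lambda_0=0$, the $b_i$ would be linearly dependent, a contradiction; so $\lambda_0\neq 0$ and $v=-\lambda_0^{-1}\sum_{i=1}^n\lambda_i b_i$. Discarding the zero terms yields a finite linear combination of elements of $\mathcal B$ with non-zero coefficients.

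\emph{Uniqueness.} Given two such representations of a fixed $v\neq 0$, I would merge all basis vectors occurring in either of them into a single finite list $e_1,\dots,e_m$ of distinct elements of $\mathcal B$, rewrite both sums over this list (inserting zero coefficients where a vector is missing from one of them), and subtract. Linear independence of $\{e_1,\dots,e_m\}$ then forces the coefficients to agree termwise, so the two representations coincide. Thus each $v\neq 0$ is encoded uniquely by its finite \emph{support} $S\subset\mathcal B$ together with a function $S\to\mathbb K\setminus\{0\}$, and in particular $V={\rm span}\,\mathcal B$.

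\emph{Cardinality.} The encoding $v\mapsto(S,\text{its coefficient function})$ is injective, so $V\setminus\{0\}$ injects into $\bigcup_{n\ge 0}\big(\mathcal B^{\,n}\times\mathbb K^{\,n}\big)$, whence $\card V\le\sum_{n\ge 0}(\card\mathcal B)^n(\card\mathbb K)^n$. When $\dim V=\card\mathcal B$ is infinite, the standard rules of cardinal arithmetic — $\kappa^n=\kappa$ for infinite $\kappa$ and $n\ge 1$, and $\aleph_0\cdot\kappa=\kappa$ — collapse the right-hand side to $\max\{\dim V,\card\mathbb K,\aleph_0\}=\max\{\dim V,\card\mathbb K\}$. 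For the reverse inequality, $\mathcal B\subseteq V$ gives $\card V\ge\dim V$, while for any fixed non-zero $b\in\mathcal B$ the map $\lambda\mapsto\lambda b$ embeds $\mathbb K$ into $V$ and gives $\card V\ge\card\mathbb K$; hence $\card V=\max\{\dim V,\card\mathbb K\}$. (If $\dim V$ is finite the same count gives $\card V=(\card\mathbb K)^{\dim V}$, which still equals $\max\{\dim V,\card\mathbb K\}$ when $\mathbb K$ is infinite; only the elementary case of a finite field over a finite-dimensional space falls outside the scope of this discussion.)

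I expect the one mildly delicate point to be the cardinal estimate: one must keep the exponent $n$ attached to \emph{both} factors $\card\mathcal B$ and $\card\mathbb K$, so that a finite scalar field cannot inflate the bound, and invoke countable additivity of cardinals to absorb the sum over $n$. Everything else is an immediate consequence of $\mathcal B$ being a maximal free set together with the linear independence of its finite subsets.
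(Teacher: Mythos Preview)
Your proof is correct and, for the spanning claim, follows the same idea as the paper: maximality of $\mathcal B$ forces $\mathcal B\cup\{v\}$ to be dependent, whence $v$ lies in the span. The paper's own proof stops there---it gives only the two-line contradiction for spanning and says nothing about uniqueness or about the cardinality formula $\card V=\max\{\dim V,\card\mathbb K\}$. Your argument is therefore strictly more complete: the uniqueness step is the standard subtraction using linear independence, and your cardinal estimate via the injection into $\bigcup_{n\ge 0}\mathcal B^{\,n}\times\mathbb K^{\,n}$ together with $\kappa^n=\kappa$ and $\aleph_0\cdot\kappa=\kappa$ is exactly the computation one needs (and which the paper tacitly assumes).
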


\begin{proof} Let $v\in V, v\not=0$, and suppose (seeking a contradiction) that $v\not=\sum_{k=1}^m c_n v_n$ for all $m\in\mathbb N$, all $v_n\in\mathcal B$ and all $c_n\in\mathbb K$. Thus the set $\overline{\mathcal B}=\mathcal B\cup\{v\}$ is also a free set, contradicting the maximality of $\mathcal B$.
\end{proof}

\begin{definition}[Spectrum]\label{D: Spectrum} Let $\mathcal B=\{v_s: s\in S\}$ be a Hamel basis of $V$ with an index set $S$ with $\card\, S=\dim V$. Let $v\in V, v\not=0$. Then the (finite) set $\spm(v)\subset S$  is called the \emph{spectrum} of $v$ (relative to $\mathcal B$ and $S$) if $v=\sum_{s\in\spm (v)} c_s v_s$ and $c_s\in\mathbb K\setminus\{0\}$ for all $s\in\spm (v)$. We shall sometimes write simply $v=\sum_{s\in S} c_s v_s$ or even $v=\sum c_s v_s$ instead of $v=\sum_{s\in\spm (v)} c_s v_s$ (keeping in mind that the sum $\sum_{s\in S} c_s v_s=\sum c_s v_s$ is finite). We also let $\spm(0)=\varnothing$ and $\sum_{s\in \varnothing} c_s v_s=0$.
\end{definition}

Notice that $\card\, \mathbb K\leq\card\, V$ and $\dim V\leq\card\, V$ hold trivially. The next equalities follow immediately from the formula	in Lemma~\ref{L: Span}.

\begin{corollary}[Two Equalities]\label{C: Two Equalities} 
	\begin{C-enum}
		\item If $\card\,\mathbb K<\card\,V$, then $\dim V
		=\card\,V$ (see Example~\ref{Ex: Hamel Example} in this paper).
		
		\item If $\dim V<\card\,V$, then $\card\, V=\card\,\mathbb K$ (see Example~\ref{Ex: The Space of Polynomials and its Dual} and Example~\ref{Ex: The Space of Non-Standard Polynomials and its Dual}).
	\end{C-enum}
\end{corollary}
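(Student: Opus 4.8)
The plan is to derive both equalities directly from the formula $\card\,V=\max\{\dim V,\,\card\,\mathbb K\}$ established in Lemma~\ref{L: Span}, combined with the fact (equivalent to the Axiom of Choice, as recalled in Section~\ref{S: Notations}) that the usual order on cardinals is total. Totality is what makes the argument trivial: it guarantees that $\max\{\dim V,\,\card\,\mathbb K\}$ is actually attained, i.e. it is literally \emph{one} of the two cardinals $\dim V$ or $\card\,\mathbb K$, so each part becomes a two-case dichotomy in which one case is excluded by the hypothesis.

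For part (i), I would assume $\card\,\mathbb K<\card\,V$ and apply Lemma~\ref{L: Span} to write $\card\,V=\max\{\dim V,\,\card\,\mathbb K\}$. This maximum cannot equal $\card\,\mathbb K$, for otherwise $\card\,V=\card\,\mathbb K$, contradicting the hypothesis $\card\,\mathbb K<\card\,V$; hence it must equal $\dim V$, which gives $\dim V=\card\,V$.

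For part (ii), I would assume $\dim V<\card\,V$ and again invoke Lemma~\ref{L: Span}. Now $\max\{\dim V,\,\card\,\mathbb K\}$ cannot equal $\dim V$, since that would force $\card\,V=\dim V$, against the hypothesis; therefore it equals $\card\,\mathbb K$, that is, $\card\,V=\card\,\mathbb K$.

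I do not anticipate any real obstacle here: both statements are essentially contrapositive-flavoured restatements of the single identity of Lemma~\ref{L: Span}, and the proof is just a case analysis on which term realises the maximum. The only point worth stating explicitly is the appeal to totality of the cardinal ordering, which legitimises the dichotomy; without it one would only know $\card\,V$ is an upper bound of $\{\dim V,\,\card\,\mathbb K\}$ and the clean split would be unavailable.
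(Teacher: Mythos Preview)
Your proposal is correct and matches the paper's approach exactly: the paper simply states that both equalities ``follow immediately from the formula in Lemma~\ref{L: Span},'' and your case analysis on which term attains the maximum is precisely the intended unpacking of that remark. Your explicit mention of the totality of the cardinal order (from Section~\ref{S: Notations}) is a helpful clarification that the paper leaves implicit here.
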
	

\begin{theorem}[Existence of Basis]\label{T: Existence of Basis} Let $V$ be a vector space over a field of scalars $\mathbb K$ and let $E\subset V$ be a free set of $V$. Then there exists a basis $\mathcal B$ of $V$ which contains $E$ and such that $V= {\rm span}\, E\oplus{\rm span} (\mathcal B\setminus E)$. Consequently, every non-trivial vector space has a basis.
\end{theorem}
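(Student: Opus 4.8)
The plan is a straightforward Zorn's Lemma argument on the poset of free extensions of $E$. Concretely, I would consider
\[
\mathcal F=\{\,\mathcal A\subseteq V:\ \mathcal A\text{ is a free set of }V\text{ and }E\subseteq\mathcal A\,\},
\]
partially ordered by inclusion. First, $\mathcal F\neq\varnothing$ since $E\in\mathcal F$ by hypothesis. Second, if $\mathcal C\subseteq\mathcal F$ is a chain, then $\mathcal A_0=\bigcup\mathcal C$ belongs to $\mathcal F$: it obviously contains $E$, and it is free because ``free'' is a finitary condition in the sense of Definition~\ref{D: Basis and Dimension} --- any finite subset of $\mathcal A_0$ is already contained in a single member of the chain $\mathcal C$, hence consists of linearly independent vectors. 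Thus $\mathcal A_0$ is an upper bound of $\mathcal C$ in $\mathcal F$, and Zorn's Lemma yields a maximal element $\mathcal B\in\mathcal F$.

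Next I would check that this $\mathcal B$ is in fact a basis of $V$ in the sense of Definition~\ref{D: Basis and Dimension}, i.e.\ a maximal free set of $V$. Indeed $\mathcal B$ is free; and if $\mathcal B\subseteq\mathcal B'$ with $\mathcal B\neq\mathcal B'$ and $\mathcal B'$ free, then $\mathcal B'\supseteq E$ as well, so $\mathcal B'\in\mathcal F$, contradicting the maximality of $\mathcal B$ in $\mathcal F$. Hence $\mathcal B$ is a Hamel basis of $V$ containing $E$.

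For the direct-sum decomposition, put $F=\mathcal B\setminus E$, so that $E$ and $F$ are disjoint subsets of $\mathcal B$ with $E\cup F=\mathcal B$. By Lemma~\ref{L: Span} we have $V={\rm span}\,\mathcal B={\rm span}\,E+{\rm span}\,F$. To see the sum is direct, take $v\in{\rm span}\,E\cap{\rm span}\,F$; expressing $v$ once as a linear combination of vectors of $E$ and once as a linear combination of vectors of $F$, these are two representations of $v$ as linear combinations of vectors of the same basis $\mathcal B$, so the uniqueness part of Lemma~\ref{L: Span} forces all the coefficients to be zero, whence $v=0$. Therefore ${\rm span}\,E\cap{\rm span}\,F=\{0\}$ and $V={\rm span}\,E\oplus{\rm span}\,F$. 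Finally the ``consequently'' follows by applying what was just proved with $E=\varnothing$ (vacuously free), or --- if one wishes to start from a prescribed nonzero vector when $V$ is non-trivial --- with $E=\{v\}$ for any $v\neq 0$, which is a free set precisely because $v\neq 0$.

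I do not expect any real obstacle here; the one place that genuinely needs the hypotheses as stated is the upper-bound step, where the finitary formulation of ``free'' in Definition~\ref{D: Basis and Dimension} is exactly what makes the union of a chain of free sets free. A minor point of care is the bookkeeping in the maximality argument, namely noticing that any free set properly containing $\mathcal B$ still contains $E$ and hence already lies in $\mathcal F$.
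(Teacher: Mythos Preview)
Your proof is correct and follows essentially the same Zorn's Lemma argument as the paper, applied to the poset of free sets containing $E$ ordered by inclusion. You actually supply more detail than the paper does: the paper stops after producing the maximal element $\mathcal B$, whereas you also verify explicitly that $\mathcal B$ is a maximal free set of $V$, establish the direct-sum decomposition via Lemma~\ref{L: Span}, and spell out the ``Consequently'' clause.
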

\begin{proof} Consider the family of subsets of $V$:
	\[
	\mathcal F(E)= \{S\in\mathcal P(V): S \text{ is a free set and } E\subseteq S\}.
	\]
	We shall treat $\mathcal F(E)$ as a partially ordered set under the inclusion, $\subseteq$. Note that $\mathcal F(E)$ is a non-empty set, because $E\in\mathcal F(E)$. We observe that every totally ordered subset (chain) $\mathcal C$ of $\mathcal F(E)$ is bounded from above by its union $\bigcup_{C\in\mathcal C} C$ and also $\bigcup_{C\in\mathcal C} C\in\mathcal F(E)$. By \emph{Zorn's Lemma} $\mathcal F(E)$ has a maximal element, $\mathcal B$. 
\end{proof}

\begin{remark}[Hamel Bases] \label{R: Hamel Bases}
	\begin{R-enum}
		\item We sometimes refer to the \emph{maximal free sets} $\mathcal B$ of $V$ as   \emph{Hamel bases} of $V$ after Georg Hamel~\cite{gHamel} who proved the above theorem in 1905 in the particular case of $V=\R$ and $\mathbb K=\mathbb Q$ (see Example~\ref{Ex: Hamel Example} later in the paper). 
		
		\item Unlike the case of finite-dimensional vector spaces, in the case of an infinite-dimensional vector space $V$ the equality $\card\,E=\dim V$ for some free set $E$ of $V$ does not imply that $E$ is a basis of $V$. Indeed, let $\mathcal B$ be a basis of $V$ and let $E=\mathcal B\setminus\{v\}$ for some $v\in\mathcal B$. Then $E$ is a free set with $\card\,E =\dim V$, but $E$ is not a basis for $V$.
	\end{R-enum}
\end{remark}
The next result validates the usefulness of the notion of \emph{Hamel dimension}.

\begin{theorem}[Isomorphic Spaces]\label{T: Isomorphic Spaces} Let $V$ and $W$ be two vector spaces over the same field $\mathbb K$ such that $\dim V= \dim W$. Then $V$ and $W$ are isomorphic. In particular, the mapping $\sigma: V\mapsto W$, defined by $\sigma(\sum_{s\in S}c_s v_s)=\sum_{s\in S}c_s w_s$, is a vector-isomorphism from $V$ to $W$, where $(v_s)_{s\in S}$ and  $(w_s)_{s\in S}$ are bases of $V$ and $W$, respectively, $S$ is an index set of $\card\, S=\dim V= \dim W$ and $c_s\in\mathbb K$ for all $s\in S$.
\end{theorem}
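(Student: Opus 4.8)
The plan is to show directly that the map $\sigma$ described in the statement is a well-defined $\mathbb K$-linear bijection, which immediately gives $V \cong W$. Since the statement already provides the candidate isomorphism, there is nothing to search for; the work is entirely in verifying the three properties (well-definedness, linearity, bijectivity), and the only subtle point is well-definedness, which is where Lemma~\ref{L: Span} does the heavy lifting.

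First I would fix bases $(v_s)_{s\in S}$ of $V$ and $(w_s)_{s\in S}$ of $W$ indexed by a common set $S$ with $\card S = \dim V = \dim W$; such an $S$ and such a pairing of the two bases exist precisely because the two dimensions (cardinalities of any bases, by Lemma~\ref{L: Justification}) agree. Then, for $v \in V$, write $v = \sum_{s \in \spm(v)} c_s v_s$ using the unique finite representation guaranteed by Lemma~\ref{L: Span} (with the convention $\spm(0) = \varnothing$), and define $\sigma(v) = \sum_{s \in \spm(v)} c_s w_s$. Well-definedness is then immediate: the coefficients $c_s$ and the finite index set $\spm(v)$ are uniquely determined by $v$, so $\sigma(v)$ is unambiguously specified. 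This is the step I would flag as the ``main obstacle,'' though it is really only a matter of invoking uniqueness in Lemma~\ref{L: Span} correctly — once that lemma is in hand there is no genuine difficulty.

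Next I would check linearity. Given $v = \sum c_s v_s$ and $v' = \sum c_s' v_s$ (extending both sums over $\spm(v) \cup \spm(v')$ by allowing zero coefficients) and $\lambda \in \mathbb K$, we have $\lambda v + v' = \sum (\lambda c_s + c_s') v_s$; applying $\sigma$ and using its definition gives $\sigma(\lambda v + v') = \sum (\lambda c_s + c_s') w_s = \lambda \sigma(v) + \sigma(v')$. (One should note in passing that dropping the zero coefficients to pass to the genuine spectrum does not change the value of the sum, so this manipulation is legitimate.) For bijectivity, I would exhibit the inverse explicitly: by symmetry of the construction, the map $\tau : W \to V$ sending $\sum d_s w_s$ to $\sum d_s v_s$ is well-defined and linear by the same argument, and $\tau \circ \sigma = \mathrm{id}_V$, $\sigma \circ \tau = \mathrm{id}_W$ follow at once from the definitions on basis vectors together with linearity. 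Hence $\sigma$ is a vector-space isomorphism and $V \cong W$, which completes the proof. The converse direction (that isomorphic spaces have equal dimension) is not asserted in the statement, so I would not address it, though it follows from the fact that an isomorphism carries a basis to a basis.
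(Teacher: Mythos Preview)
Your proof is correct and is precisely the standard argument the paper has in mind: the paper's own ``proof'' simply says that the argument is almost identical to the finite-dimensional case and is left to the reader. You have supplied exactly that omitted verification (well-definedness via the uniqueness in Lemma~\ref{L: Span}, linearity, and bijectivity via the explicit inverse), so there is nothing to compare or correct.
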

\begin{proof} The proof is almost identical to the proof of finite-dimensional case and we leave it to the reader.
\end{proof}

\begin{lemma}[Subspace Lemma] \label{L: Subspace Lemma} Let $U$ and $V$ be two vector spaces over the same field $\mathbb K$. Then either $U$ and $V$ are isomorphic, or one of the spaces is (can be embedded as) a subspace of the other (see Remark~\ref{R: Warning}). Consequently, if $U$ is a vector subspace of $V$ and $\dim U<\dim V$, then $U$ is a proper subspace of $V$.
\end{lemma}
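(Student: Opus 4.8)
The plan is to reduce everything to the trichotomy of cardinal numbers. First I would invoke Theorem~\ref{T: Existence of Basis} to fix a basis $\mathcal B_U=\{u_s:s\in S\}$ of $U$ and a basis $\mathcal B_V=\{v_t:t\in T\}$ of $V$, so that $\dim U=\card\,S$ and $\dim V=\card\,T$. Since the usual partial order on cardinals is a total order — one of the equivalents of the Axiom of Choice recalled in Section~\ref{S: Notations} — exactly one of the following holds: $\card\,S=\card\,T$, or $\card\,S<\card\,T$, or $\card\,T<\card\,S$. (If one of the spaces is trivial the statement is immediate: the trivial space is a subspace of every space, and two trivial spaces are isomorphic; so I may assume both $U$ and $V$ are non-trivial and the bases above exist.)

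In the first case $\dim U=\dim V$, and Theorem~\ref{T: Isomorphic Spaces} yields an isomorphism $U\cong V$ directly. In the second case $\card\,S\leq\card\,T$, so there is an injection $\iota:S\to T$; define $\Phi:U\to V$ on the basis by $\Phi(u_s)=v_{\iota(s)}$ and extend linearly, exactly as in the construction in Theorem~\ref{T: Isomorphic Spaces}. Then $\Phi$ is linear, and it maps $\mathcal B_U$ bijectively onto the free set $\{v_{\iota(s)}:s\in S\}\subseteq\mathcal B_V$; hence $\Phi$ is injective and $U$ is isomorphic to the subspace $\Phi(U)={\rm span}\{v_{\iota(s)}:s\in S\}\overset{\centerdot}{\subseteq}V$. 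The third case is symmetric, with the roles of $U$ and $V$ interchanged. This establishes the dichotomy.

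For the consequence, suppose $U\overset{\centerdot}{\subseteq}V$ with $\dim U<\dim V$. If we had $U=V$, then trivially $\dim U=\dim V$, contradicting $\dim U<\dim V$; hence $U\neq V$, i.e. $U$ is a proper subspace of $V$.

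The only point requiring care is the injectivity of $\Phi$ in the inequality cases: one must check that the linear extension of an injection between bases is itself injective. This comes down to the uniqueness of basis expansions (Lemma~\ref{L: Span}): a non-zero vector $u=\sum_{s\in\spm(u)}c_s u_s$ with $c_s\neq 0$ maps to $\Phi(u)=\sum_{s\in\spm(u)}c_s v_{\iota(s)}$, a non-trivial linear combination of \emph{distinct} basis vectors of $V$, which cannot vanish. Everything else is bookkeeping, and the sole non-elementary ingredient is the total order on cardinals, i.e. the Axiom of Choice, which is already in force.
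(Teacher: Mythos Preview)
Your argument is correct and follows exactly the same approach as the paper: reduce to the trichotomy of cardinals (the fourth equivalent of the Axiom of Choice listed in Section~\ref{S: Notations}), and then invoke Theorem~\ref{T: Isomorphic Spaces} or the obvious basis-to-basis embedding in each of the three cases. The paper's proof is in fact nothing more than the one-line observation that $\dim U$ and $\dim V$ are comparable; you have simply filled in the details (the explicit embedding, the injectivity check via Lemma~\ref{L: Span}, and the ``Consequently'' clause) that the paper leaves implicit.
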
 

\begin{proof} Let $\dim U=\alpha$ and  $\dim V=\beta$. Then exactly one of the following holds: $\alpha=\beta$, $\alpha<\beta$,  $\alpha>\beta$, by the axiom of choice in its forth version (Section~\ref{S: Notations}).
\end{proof}

\begin{remark}[Warning] \label{R: Warning} If $U$ and $V$ are infinite-dimensional vector spaces over the same field of scalars $\mathbb K$, it might happen that $U$ is a proper subspace of $V$ and at the same time $\dim U=\dim V$. For example, let $\mathbb K=\R$ and $V$ be the vector space $\R^\infty$ consisting of all sequences in $\R$ with finite support. Let $\{(e_1), (e_2),\dots\}$ be the standard basis of $\R^\infty$, i.e. $(e_n)$ is a sequence in $\R$, defined by  $(e_n)_i=\begin{cases}1, &\text{if $n=i$,}\\
		0, &\text {if $n\not=i$}.
	\end{cases}$ (for more detail, we refer to Section~\ref{S: Coordinate Isomorphism}). Then $U={\rm span}\{e_2, e_3,\dots\}$ is obviously a proper subspace of $\R^\infty$. On the other hand, $L\in\mathcal L(U, V)$, defined by $L(e_{n+1})=e_n, n\in\mathbb N$, is an isomorphism from $U$ to $V$.
\end{remark}
\begin{corollary}[Algebraic Complement]\label{C: Algebraic Complement} Every subspace $U$ of $V$ has a (non-unique) algebraic complement $W$ to $V$, i.e. a subspace  $W$ of $V$ such that $V=U\oplus W$.
\end{corollary}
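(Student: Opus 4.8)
The plan is to prove Corollary~\ref{C: Algebraic Complement} by a direct application of Theorem~\ref{T: Existence of Basis}. First I would fix a subspace $U$ of $V$ and choose, using Theorem~\ref{T: Existence of Basis} applied to the free set $\varnothing$ inside $U$ (or equivalently, the theorem guarantees any vector space has a basis), a basis $\mathcal{B}_U$ of $U$. The key observation is that $\mathcal{B}_U$ is a free set in $V$ as well, since linear independence of a finite subset of vectors does not depend on whether we view those vectors as living in $U$ or in the ambient space $V$.

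Next I would invoke Theorem~\ref{T: Existence of Basis} a second time, now with $V$ as the ambient space and $E = \mathcal{B}_U$ as the given free set. This yields a basis $\mathcal{B}$ of $V$ with $\mathcal{B}_U \subseteq \mathcal{B}$ and, crucially, the direct-sum decomposition $V = {\rm span}\,\mathcal{B}_U \oplus {\rm span}(\mathcal{B}\setminus\mathcal{B}_U)$. Since ${\rm span}\,\mathcal{B}_U = U$ by construction, setting $W := {\rm span}(\mathcal{B}\setminus\mathcal{B}_U)$ gives $V = U \oplus W$, which is exactly the assertion. The non-uniqueness is immediate from the non-uniqueness of the extension $\mathcal{B}$ (or one can exhibit it directly in low dimensions), so no separate argument is really needed, though a one-line remark could be included.

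I do not anticipate a genuine obstacle here: the statement is essentially a restatement of the ``splitting'' clause already built into Theorem~\ref{T: Existence of Basis}. The only point requiring a small amount of care is the verification that ${\rm span}\,\mathcal{B}_U = U$ — that is, that a basis of the subspace $U$ indeed spans all of $U$ and nothing more — which follows from Lemma~\ref{L: Span} applied within $U$. If anything deserves emphasis, it is simply that the freeness of $\mathcal{B}_U$ transfers from $U$ to $V$ unchanged, so that $\mathcal{B}_U$ is a legitimate input to the second application of Theorem~\ref{T: Existence of Basis}.

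\begin{proof}
By Theorem~\ref{T: Existence of Basis}, the subspace $U$ has a basis $\mathcal{B}_U$. Since linear independence of a finite set of vectors is an intrinsic property (independent of the ambient space), $\mathcal{B}_U$ is a free set of $V$. Applying Theorem~\ref{T: Existence of Basis} again, this time to $V$ with the free set $E = \mathcal{B}_U$, we obtain a basis $\mathcal{B}$ of $V$ containing $\mathcal{B}_U$ and satisfying
\[
V = {\rm span}\,\mathcal{B}_U \oplus {\rm span}(\mathcal{B}\setminus\mathcal{B}_U).
\]
By Lemma~\ref{L: Span} applied within $U$, we have ${\rm span}\,\mathcal{B}_U = U$. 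Hence, setting $W = {\rm span}(\mathcal{B}\setminus\mathcal{B}_U)$, we conclude $V = U\oplus W$. The non-uniqueness of $W$ reflects the freedom in choosing the extension $\mathcal{B}$ of $\mathcal{B}_U$.
\end{proof}
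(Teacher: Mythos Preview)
Your proof is correct and follows essentially the same approach as the paper: take a basis of $U$, extend it via Theorem~\ref{T: Existence of Basis} to a basis of $V$, and let $W$ be the span of the new basis vectors. The paper's version is slightly terser (it phrases things with index sets $A\subseteq B$ rather than naming the bases directly), but the argument is identical.
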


\begin{proof}  Let $A\subseteq B$ hold for two (index) sets with $\card(A)=\dim(U)$ and $\card\,B=\dim V$. Let $\{u_\alpha: \alpha\in A\}$ be a (Hamel) basis of $U$ and 
	\[
	\{u_\alpha: \alpha\in A\}\cup\{w_\beta: \beta\in B\setminus A\},
	\]
	be its extension to a Hamel basis of $V$ (Theorem~\ref{T: Existence of Basis}). Then $W={\rm span}\{w_\beta: \beta\in B\setminus A\}$.
\end{proof}

The next result is in sharp contrast to its counterpart in the finite-dimensional linear algebra.

\begin{lemma}[Realification]\label{L: Realification} Let $V|\mathcal R(i)$ denote the vector space
	$V$ over a field of the form $\mathcal R(i)$, where $\mathcal R$ is a formally real (orderable) field. Let $V|\mathcal R$ denote the realification (decomplexification) of $V|\mathcal R(i)$. Then if one of the vector space is infinite-dimensional, so is the other and  $\dim (V|\mathcal R(i))=\dim (V|\mathcal R)$. 
\end{lemma}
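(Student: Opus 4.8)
The plan is to manufacture an explicit $\mathcal R$-basis of $V|\mathcal R$ out of an arbitrary $\mathcal R(i)$-basis of $V|\mathcal R(i)$, and then simply compare cardinalities. First I would record the only algebraic input: since $\mathcal R$ is formally real, $-1$ is not a sum of squares in $\mathcal R$, in particular not a square, so $X^2+1$ is irreducible over $\mathcal R$; hence $\mathcal R(i)=\mathcal R[X]/(X^2+1)$ is a field extension of $\mathcal R$ of degree $2$, with $\{1,i\}$ an $\mathcal R$-basis. In particular $i\notin\mathcal R$ and $1-i\neq 0$ in $\mathcal R(i)$.

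Next, I would fix a Hamel basis $\mathcal B=\{v_s:s\in S\}$ of $V|\mathcal R(i)$ (Theorem~\ref{T: Existence of Basis}), where $\card\,S=\dim(V|\mathcal R(i))$, and set $i\mathcal B=\{i v_s:s\in S\}$. The claim is that $\mathcal B'=\mathcal B\cup i\mathcal B$ is a Hamel basis of $V|\mathcal R$. Spanning is immediate: any $v\in V$ has a finite representation $v=\sum_s c_s v_s$ with $c_s\in\mathcal R(i)$, and writing $c_s=a_s+i b_s$ with $a_s,b_s\in\mathcal R$ gives $v=\sum_s a_s v_s+\sum_s b_s(i v_s)\in{\rm span}_{\mathcal R}\,\mathcal B'$. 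For $\mathcal R$-independence, a relation $\sum_s a_s v_s+\sum_s b_s(i v_s)=0$ with $a_s,b_s\in\mathcal R$ rewrites as $\sum_s(a_s+i b_s)v_s=0$, so $\mathcal R(i)$-independence of the $v_s$ forces $a_s+i b_s=0$, whence $a_s=b_s=0$ because $\{1,i\}$ is $\mathcal R$-independent. Finally $\mathcal B\cap i\mathcal B=\varnothing$: if $v_s=i v_t$ then either $s=t$, giving $(1-i)v_s=0$ and $v_s=0$, impossible, or $s\neq t$, giving a nontrivial $\mathcal R(i)$-dependence among distinct basis vectors, again impossible. Since $v_s\mapsto i v_s$ is a bijection $\mathcal B\to i\mathcal B$ and the two sets are disjoint, $\card\,\mathcal B'=\card\,\mathcal B+\card\,i\mathcal B=2\,\card\,S$, i.e.\ $\dim(V|\mathcal R)=2\,\dim(V|\mathcal R(i))$.

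To finish, I would read the two assertions off this equality. An integer $n$ pairs with $2n$ and an infinite cardinal with an infinite cardinal, so $\dim(V|\mathcal R)$ is infinite if and only if $\dim(V|\mathcal R(i))$ is; this gives the first claim. And when $\kappa:=\dim(V|\mathcal R(i))$ is infinite, standard cardinal arithmetic gives $2\kappa=\kappa+\kappa=\kappa$ (this is where the Axiom of Choice is used, already in force by Section~\ref{S: Notations}), so $\dim(V|\mathcal R)=\dim(V|\mathcal R(i))$.

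I do not expect a real obstacle here; the one point deserving care is the cardinal identity $2\kappa=\kappa$ for infinite $\kappa$ — exactly the place where the behaviour diverges from the finite-dimensional formula $\dim(V|\mathcal R)=2\,\dim(V|\mathcal R(i))$ — together with the small but essential check that $\mathcal B$ and $i\mathcal B$ are disjoint, without which the union need not have cardinality $2\,\card\,S$.
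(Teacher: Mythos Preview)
Your proof is correct and follows the same overall strategy as the paper: build an explicit $\mathcal R$-basis of $V$ out of a chosen $\mathcal R(i)$-basis and then compare cardinalities. The paper's one-line argument asserts that $\{v_s+i v_s:s\in S\}$ is a Hamel basis of $V|\mathcal R$; taken literally this is the set $\{(1+i)v_s:s\in S\}$, which does \emph{not} span $V$ over $\mathcal R$ (e.g.\ $v_s$ itself lies outside its $\mathcal R$-span, since $(1+i)^{-1}\notin\mathcal R$), so the paper's line is almost certainly a typo for the union $\{v_s:s\in S\}\cup\{iv_s:s\in S\}$ that you use. Your version is the standard correct argument, and your explicit checks of spanning, $\mathcal R$-independence, the disjointness $\mathcal B\cap i\mathcal B=\varnothing$, and the cardinal identity $2\kappa=\kappa$ for infinite $\kappa$ supply precisely the details the paper's sketch leaves implicit. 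In short: same intended approach, but your execution is the one that actually goes through.
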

\begin{proof} If $\{v_s: s\in S\}$ is a Hamel basis of  $V|\mathcal R(i)$, then $\{v_s+i v_s: s\in S\}$ is a Hamel basis of $V|\mathcal R$ and $\card(\{v_s: s\in S\})=\card(\{v_s+i v_s: s\in S\})$.
	
\end{proof}

\section{Algebraic Dual}\label{S: Algebraic Dual}

We shortly discuss the properties of the algebraic dual $V^*$ of an infinite-dimensional vector space $V$. Both $V^*$ and $V^{**}$ are proper vector space extensions  of $V$. In sharp contrast to the finite-dimensional case however, the vector spaces $V$, $V^*$ and $V^{**}$ are never isomorphic.

\begin{theorem}[Dimension of Dual Space]
	\label{T: Dimension of Dual Space} Let $V$ be an infinite dimensional vector space over the (infinite) field $\mathbb K$ and $V^*$ denotes the algebraic dual of $V$. Then $\dim V^*=\max\big\{2^{\dim V},\;  \card\,\mathbb K\big\}=\max\big\{(\dim V)_+,\;  \card\,\mathbb K\big\}$ (the  formula fails for finite-dimensional $V$). Consequently, $\dim V^* > \dim V$ for any infinite-dimensional vector space $V$.
\end{theorem}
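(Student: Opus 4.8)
The plan is to compute $\dim V^*$ by first determining $\card V^*$ and then invoking Corollary~\ref{C: Two Equalities}, reserving one classical independence construction for the case of a large scalar field. The second equality is immediate: $\dim V$ being an infinite cardinal, GCH gives $2^{\dim V}=(\dim V)_+$, and substituting this into $\max\{2^{\dim V},\card\mathbb K\}$ produces $\max\{(\dim V)_+,\card\mathbb K\}$. For the first equality, fix a Hamel basis $\mathcal B=\{v_s:s\in S\}$ of $V$ with $\card S=\dim V=:\kappa\ge\aleph_0$ (Theorem~\ref{T: Existence of Basis}). By Lemma~\ref{L: Span} every vector of $V$ has a unique finite expansion in $\mathcal B$, so a functional $T\in V^*$ is freely prescribed by its coordinate family $(\langle T,v_s\rangle)_{s\in S}\in\mathbb K^S$; hence $T\mapsto(\langle T,v_s\rangle)_{s\in S}$ is a vector isomorphism $V^*\cong\mathbb K^S$, and therefore $\card V^*=\card(\mathbb K^S)=(\card\mathbb K)^{\kappa}$.

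Next I would dispose of the principal case $\card\mathbb K<2^{\kappa}$, which in particular covers the setting of the remainder of the paper (there $\mathbb K\in\{\R,\C\}$ and $\dim V\ge\aleph_1$, so $2^{\dim V}\ge 2^{\aleph_1}=\aleph_2>\aleph_1=\card\mathbb K$ under GCH). Then $2^{\kappa}\le(\card\mathbb K)^{\kappa}\le(2^{\kappa})^{\kappa}=2^{\kappa\cdot\kappa}=2^{\kappa}$, so $\card V^*=2^{\kappa}$; and since $\card\mathbb K<2^{\kappa}=\card V^*$, Corollary~\ref{C: Two Equalities}(i) gives at once $\dim V^*=\card V^*=2^{\kappa}=\max\{2^{\kappa},\card\mathbb K\}$. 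Because $\dim V^*\ge 2^{\kappa}>\kappa$ here (and, as the remaining cases will show, always), this also yields the closing assertion $\dim V^*>\dim V$.

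It remains to treat $\card\mathbb K\ge 2^{\kappa}$, where the counting bound $\dim V^*\le\card V^*=(\card\mathbb K)^{\kappa}$ no longer closes the gap and functionals must be produced by hand. First, $\dim V^*\ge\card\mathbb K$: inside a countable block $\{s_0,s_1,\dots\}\subseteq S$, the functionals whose coordinate families are the Vandermonde sequences $(1,t,t^2,t^3,\dots)$, $t\in\mathbb K$ (extended by $0$), are linearly independent, since every finite Vandermonde determinant over $\mathbb K$ with distinct nodes is nonzero, and this already exhibits $\card\mathbb K$ independent elements of $V^*$. When $\card\mathbb K=2^{\kappa}$ one also needs $\dim V^*\ge 2^{\kappa}$; this is the classical Erd\H{o}s--Kaplansky theorem, for which one produces $2^{\kappa}$ independent functionals via, e.g., an almost-disjoint family of infinite subsets of $S$, a linear dependence among whose characteristic functions would be forced to vanish at a point of one member lying outside all the others. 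For the remaining range of $\card\mathbb K$ the desired equality reduces to a standard cardinal computation under GCH. I would either carry out this construction or cite Jacobson~\cite{nJacobsonLectures}; in any case I expect it --- the direct production of $2^{\dim V}$ independent functionals when the field is as large as $2^{\dim V}$ --- to be the sole genuine obstacle, the rest being the isomorphism $V^*\cong\mathbb K^S$, cardinal bookkeeping, and a single appeal to Corollary~\ref{C: Two Equalities}.
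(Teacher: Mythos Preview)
Your route differs from the paper's. The paper never computes $\card V^*$; it cites Jacobson for the Erd\H{o}s--Kaplansky identity $\dim V^*=(\card\mathbb K)^{\dim V}$ at the outset and then reduces the statement to the cardinal-arithmetic claim $y^x=\max\{2^x,y\}$ for infinite $x,y$, argued under GCH by writing an uncountable $y$ as $2^\lambda$ and evaluating $2^{\lambda x}=2^{\max\{\lambda,x\}}$. You instead compute the easy quantity $\card V^*=(\card\mathbb K)^\kappa$ via $V^*\cong\mathbb K^S$ and then pull $\dim V^*$ back through Corollary~\ref{C: Two Equalities}(i). In the regime $\card\mathbb K<2^\kappa$ this is cleaner and more self-contained, since it sidesteps the Erd\H{o}s--Kaplansky citation altogether.

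In the complementary regime $\card\mathbb K\ge 2^\kappa$, however, you locate the obstacle on the wrong side. Your Vandermonde family already yields $\dim V^*\ge\card\mathbb K\ge 2^\kappa$, so the almost-disjoint detour for the case $\card\mathbb K=2^\kappa$ is redundant; the lower bound is finished. What is missing is the \emph{upper} bound $\dim V^*\le\card\mathbb K$, and your counting estimate $\dim V^*\le\card V^*=(\card\mathbb K)^\kappa$ does not deliver it: with $\kappa=\aleph_0$ and $\card\mathbb K$ of countable cofinality, K\"onig's theorem forces $(\card\mathbb K)^\kappa>\card\mathbb K$ even under GCH, so the ``standard cardinal computation'' you invoke is not routine. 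At that point you are driven back to citing Jacobson for $\dim V^*$ itself --- which is exactly the paper's starting point --- and the same singular-cofinality example shows that the paper's step ``$y$ uncountable $\Rightarrow y=2^\lambda$ under GCH'' is equally delicate there.
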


\begin{proof}  We start from the formula $\dim V^*=(\card\, \mathbb K)^{\dim V}$ derived in (Jacobson~\cite{nJacobsonLectures}, Chapter 9, \S 5, p.245). Next, assuming ZFC+GCH (Section~\ref{S: Notations}), we show that $y^x=\max\{y, 2^x\}=\max\{y, x_+\}$
	for every two infinite cardinals,  $x$ and $y$. Indeed, if $y=\aleph_0$, the formula follows from the fact that $\aleph_0< 2^x$. Let $y$ be uncountable. Then $y=2^\kappa$ for some infinite cardinal $\kappa$ by the GCH. Thus $y^x=(2^\kappa)^x=2^{\kappa x}=2^{\max\{\kappa, x\}}=\begin{cases} 2^\kappa, &\text{if $\kappa\leq x$;}\\
		2^x, &\text{if $\kappa\geq x$}
	\end{cases}=
	\begin{cases} y, &\text{if $y\leq 2^x$;}\\
		2^x, &\text{if $y\geq 2^x$}
	\end{cases}=\max\{y, 2^x\}=\max\{y, x_+\}$.
	Finally, we let $y=\card(\mathbb K)$ and $x=\dim V$. The second equality in the above formula follows from the first equality since $2^{\dim V}=(\dim V)_+$ by the GCH (Section~\ref{S: Notations}).
\end{proof}

Notice that $\dim V^*\geq\card\, \mathbb K$ and $\dim V^*\geq(\dim V)_+$  hold trivially. The next equalities follow immediately from the formula	in Theorem~\ref{T: Dimension of Dual Space}.

\begin{corollary}[Two More Equalities]\label{C: Two More Equalities}
	\begin{C-enum}
		
		\item If $\card\,\mathbb K<\dim V^*$, then \newline
		$\card\, V^*=\dim V^*=(\dim V)_+$ (see Example~\ref{Ex: Hamel Example} in this paper).
		
		\item  If $(\dim V)_+<\dim V^*$, then $\dim V^*=\card\, \mathbb K$ (see Example~\ref{Ex: The Space of Non-Standard Polynomials and its Dual}). 
	\end{C-enum}
	
\end{corollary}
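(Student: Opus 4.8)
The plan is to read both statements off directly from the two‑term maximum formula established in Theorem~\ref{T: Dimension of Dual Space}, namely $\dim V^* = \max\{(\dim V)_+,\, \card\,\mathbb K\}$, by observing that a \emph{strict} inequality in the hypothesis forces one of the two candidates to be the losing term of the maximum, and then quoting the other one.

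For part (i), I would assume $\card\,\mathbb K < \dim V^*$. By Theorem~\ref{T: Dimension of Dual Space}, $\dim V^*$ equals either $(\dim V)_+$ or $\card\,\mathbb K$; the latter is ruled out by the hypothesis, so $\dim V^* = (\dim V)_+$. It then remains only to identify $\card\,V^*$ with $\dim V^*$. Since $\card\,\mathbb K < \dim V^* \le \card\,V^*$, the strict inequality $\card\,\mathbb K < \card\,V^*$ holds, so applying Corollary~\ref{C: Two Equalities}(i) to the space $V^*$ — equivalently, reading the identity $\card\,V^* = \max\{\dim V^*,\, \card\,\mathbb K\}$ from Lemma~\ref{L: Span} together with that strict inequality — gives $\card\,V^* = \dim V^*$. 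Chaining the two equalities yields $\card\,V^* = \dim V^* = (\dim V)_+$.

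For part (ii), I would assume $(\dim V)_+ < \dim V^*$. Once more $\dim V^*$ is one of $(\dim V)_+$ or $\card\,\mathbb K$ by Theorem~\ref{T: Dimension of Dual Space}, and the first option contradicts the hypothesis; hence $\dim V^* = \card\,\mathbb K$.

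I do not expect a genuine obstacle: each claim is an immediate case analysis on which term realizes the maximum. The only points requiring a little care are that the hypotheses are strict inequalities (so the excluded term cannot secretly also equal $\dim V^*$), that $V$ is tacitly infinite‑dimensional and $\mathbb K$ infinite so that Theorem~\ref{T: Dimension of Dual Space} is applicable, and, in (i), that one must still pass from $\dim V^*$ to $\card\,V^*$ via Lemma~\ref{L: Span}.
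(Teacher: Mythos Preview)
Your proposal is correct and follows essentially the same approach as the paper: the paper simply remarks that the equalities follow immediately from the formula $\dim V^*=\max\{(\dim V)_+,\card\,\mathbb K\}$ of Theorem~\ref{T: Dimension of Dual Space}, together with the trivial inequalities $\dim V^*\geq\card\,\mathbb K$ and $\dim V^*\geq(\dim V)_+$. You have merely made explicit the additional step in~(i) of passing from $\dim V^*$ to $\card\,V^*$ via Lemma~\ref{L: Span} (equivalently Corollary~\ref{C: Two Equalities}(i)), which the paper leaves implicit.
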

\begin{definition}[Embeddings \& Restricted Duals]\label{D: Embeddings & Restricted Duals} 
	\begin{D-enum}
		\item Let $\mathcal B=\{v_s: s\in S\}$ be a basis of $V$ indexed by a set $S$. Let $\{\Phi_r: r\in S\}$ be a subset of $V^*$ defined by $
		\Phi_r(v_s)=\begin{cases} 1 &\text{if $r=s$},\\
			0 & \text{if $r\not=s$}.
		\end{cases}$
		The subspace ${V^*_\mathcal B}={\rm span}\{\Phi_r: r\in S\}$ of  $V^*$ is the \emph{restricted dual} of  $V$ relative to $\mathcal B$. The mapping $\sigma_\mathcal B: V\mapsto V^*$ (with range $\sigma[V]=V^*_\mathcal B$), defined by $\sigma_\mathcal B(v_s)=\Phi_s$ for all $s\in S$, is the \emph{vector space embedding} of $V$ into $V^*$ \emph{relative to $\mathcal B$}. We write this as  $V\subset_\mathcal B V^*$. If $\mathcal B$ is a standard Hamel basis of $V$, we shall write $\sigma$, $V_*$ and $V\subset V^*$ instead of $\sigma_\mathcal B$, $V^*_\mathcal B$ and $V\subset_\mathcal B V^*$, respectively (for an example we refer to \# 2 and \# 3 in Definition~\ref{D: The Space K0S}).
		
		\item The mapping $\iota: V\mapsto V^{**}$, defined by $\iota(v)(T)=T(v)$ for all $T\in V^*$, is called \emph{canonical embedding} of $V$ into $V^{**}$ (it does not depend on a choice of any basis). We write simply, $V\subset V^{**}$.
	\end{D-enum}
\end{definition}
\begin{corollary}[Embedding of Duals]\label{C: Embedding of Duals} Let $U$ be a subspace of $V$ and $W$ be an algebraic complement of $U$ to $V$, i.e. $V=U\oplus W$ (Corollary~\ref{C: Algebraic Complement}). Let $\sigma_W: U^*\mapsto V^*$ be the mapping defined by $\sigma_W(T)(v)=T(u)$ for all $v\in V$, where $v=u+w$, $u\in U$ and $w\in W$. Then $\sigma_W$ is a vector space embedding of $U^*$ into $V^*$. We denote this by $U^*\subseteq_W V^*$  or even by $U^*\subseteq V^*$ (if $W$ is understood). 
\end{corollary}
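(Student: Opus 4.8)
The plan is to recognize $\sigma_W$ as precomposition with the projection of $V$ onto $U$ along $W$, and then verify the three things required of an \emph{embedding}: that $\sigma_W$ is well defined (lands in $V^*$), that it is $\mathbb K$-linear, and that it is injective.

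First I would exploit the hypothesis $V=U\oplus W$: by the defining property of an (internal) direct sum, every $v\in V$ has a \emph{unique} representation $v=u+w$ with $u\in U$ and $w\in W$. This uniqueness is exactly what makes the projection $\pi\colon V\to U$, $\pi(v)=u$, single-valued; and the same uniqueness, together with the fact that $U$ and $W$ are subspaces, shows that $\pi$ is $\mathbb K$-linear. With $\pi$ in hand, the formula in the statement is simply $\sigma_W(T)=T\circ\pi$ for $T\in U^*$, so $\sigma_W(T)$ is a composite $T\circ\pi$ of the linear maps $\pi\colon V\to U$ and $T\colon U\to\mathbb K$ and therefore belongs to $V^*$; thus $\sigma_W$ is a well-defined map $U^*\to V^*$.

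Next I would check linearity of $\sigma_W$ itself, which is immediate: for $T_1,T_2\in U^*$, $a,b\in\mathbb K$ and any $v\in V$ one has $\big((aT_1+bT_2)\circ\pi\big)(v)=a(T_1\circ\pi)(v)+b(T_2\circ\pi)(v)$ by the very definition of the vector-space operations on $U^*$ and $V^*$. For injectivity I would note that $\pi$ restricts to the identity on $U$ (take $w=0$ in the decomposition of a vector already lying in $U$), so $\sigma_W(T)(u)=T(\pi(u))=T(u)$ for all $u\in U$, i.e. $\sigma_W(T)\rest U=T$; hence $\sigma_W(T)=0$ in $V^*$ forces $T=0$ in $U^*$, so $\ker\sigma_W=\{0\}$. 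Since an injective linear map is precisely a vector-space embedding, this completes the argument.

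There is no genuine obstacle here; the only step deserving care is the first one — that $\sigma_W(T)$ is a bona fide function on all of $V$ — and this is delivered entirely by the uniqueness clause in the direct-sum decomposition $V=U\oplus W$ guaranteed by Corollary~\ref{C: Algebraic Complement}. If one preferred, the whole argument could instead be run in coordinates: extend a basis $\{u_\alpha:\alpha\in A\}$ of $U$ to a basis of $V$ through $W$ as in Corollary~\ref{C: Algebraic Complement}, and define $\sigma_W(T)$ by prescribing its values on this basis; but the coordinate-free version above is shorter and makes the role of the direct sum transparent.
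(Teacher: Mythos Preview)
Your argument is correct. The paper actually states this corollary without proof, treating it as immediate from the definition of the direct sum $V=U\oplus W$; your write-up supplies exactly the routine details one would expect (well-definedness via the projection $\pi$, linearity of $T\circ\pi$, and injectivity from $\pi\rest U=\mathrm{id}_U$), so there is nothing to compare against and nothing to fix.
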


For a recent study, from a purely algebraic point of view, of the relationship between the \emph{restricted dual} $V_*$ and the \emph{algebraic dual} $V^*$ of a vector space $V$ with a countable Hamel basis, we refer to the recent article Chirvasitu \& Penkov~\cite{Chiv&Penkov} (no relation to solvability of PDE and generalized functions).

\section{Linear Maps and Operators}\label{S: Linear Maps and Operators}

We present selected results of linear algebra (needed for the rest of the article) which are well-known for finite-dimensional vector spaces, but less-known for infinite-dimensional spaces.

\begin{theorem}[Extension Principle]\label{T: Extension Principle}
	Let $U, V$ and $W$ be three vector spaces over the same field of scalars, $\mathbb K$ (the case $W=\mathbb K$ is not excluded) and let $U$ be a subspace of $V$. Then every linear map $L\in\mathcal L(U, W)$ can be extended (non-uniquely) to a linear map $\widehat{L}\in\mathcal L(V, W)$.
\end{theorem}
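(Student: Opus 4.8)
The plan is to extend $L$ one step at a time beyond $U$ and then invoke Zorn's Lemma to reach all of $V$. First I would set up the poset of partial extensions: let $\mathcal{P}$ be the collection of pairs $(Y, M)$ where $Y$ is a subspace of $V$ with $U \subseteq Y$ and $M \in \mathcal{L}(Y, W)$ with $M\restriction U = L$, ordered by $(Y_1, M_1) \leq (Y_2, M_2)$ iff $Y_1 \subseteq Y_2$ and $M_2 \restriction Y_1 = M_1$. This set is non-empty since $(U, L) \in \mathcal{P}$, and any chain $\{(Y_\alpha, M_\alpha)\}$ has an upper bound given by $Y = \bigcup_\alpha Y_\alpha$ (a subspace, being an increasing union of subspaces) with $M$ defined by $M(y) = M_\alpha(y)$ whenever $y \in Y_\alpha$; this is well-defined and linear by the compatibility condition in the chain order. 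By Zorn's Lemma $\mathcal{P}$ has a maximal element $(Y_0, M_0)$.

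The key step is then to show $Y_0 = V$. Suppose not, and pick $v \in V \setminus Y_0$. Since $v \notin Y_0$, the set $\{v\}$ together with any basis of $Y_0$ remains free, so by Theorem~\ref{T: Existence of Basis} applied to the free set (basis of $Y_0$) $\cup \{v\}$ inside $\mathrm{span}(Y_0 \cup \{v\}) = Y_0 \oplus \mathbb{K}v$, every element of $Y_0 \oplus \mathbb{K}v$ is uniquely $y + \lambda v$ with $y \in Y_0$, $\lambda \in \mathbb{K}$. Define $M_1(y + \lambda v) = M_0(y) + \lambda w_0$ for an arbitrary fixed choice $w_0 \in W$ (e.g.\ $w_0 = 0$). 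Because the decomposition $y + \lambda v$ is unique, $M_1$ is a well-defined linear map on $Y_0 \oplus \mathbb{K}v$ extending $M_0$, hence $(Y_0 \oplus \mathbb{K}v, M_1) \in \mathcal{P}$ strictly dominates $(Y_0, M_0)$, contradicting maximality. Therefore $Y_0 = V$ and $\widehat{L} := M_0$ is the desired extension.

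The only genuine subtlety --- and the "main obstacle" such as it is --- is verifying well-definedness of the upper bound $M$ on a chain: one must check that if $y$ lies in both $Y_\alpha$ and $Y_\beta$, then $M_\alpha(y) = M_\beta(y)$, which follows because the chain is totally ordered (say $Y_\alpha \subseteq Y_\beta$) so $M_\beta \restriction Y_\alpha = M_\alpha$. Everything else is routine. Alternatively, and perhaps more cleanly, one can bypass the step-by-step argument entirely: by Corollary~\ref{C: Algebraic Complement} choose an algebraic complement $W'$ of $U$ in $V$, so $V = U \oplus W'$, and define $\widehat{L}(u + w') = L(u)$ for $u \in U$, $w' \in W'$; this is immediately linear and extends $L$. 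I would likely present this second argument as the proof and remark that the non-uniqueness is visible from the freedom in choosing both $W'$ and the values of $\widehat{L}$ on $W'$.
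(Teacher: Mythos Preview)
Your proposal is correct. Your alternative argument via Corollary~\ref{C: Algebraic Complement} is essentially the paper's proof: the paper extends a Hamel basis $\{u_\alpha:\alpha\in A\}$ of $U$ to a Hamel basis $\{u_\alpha\}\cup\{v_\beta\}$ of $V$ (Theorem~\ref{T: Existence of Basis}) and sets $\widehat{L}(u_\alpha)=L(u_\alpha)$, $\widehat{L}(v_\beta)=0$, which is exactly the complement construction spelled out in coordinates.

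Your primary argument is a genuinely different route: you apply Zorn's Lemma directly to the poset of partial linear extensions rather than invoking the already-packaged basis existence theorem. This is self-contained and avoids choosing a basis of $U$ altogether, at the cost of redoing the Zorn machinery that the paper has already encapsulated in Theorem~\ref{T: Existence of Basis}. The paper's approach is shorter here precisely because that work was done earlier; your approach would be preferable in a context where Hamel bases are not yet available. Both make the non-uniqueness equally transparent.
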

\begin{proof} Let $A\subseteq B$ hold for two (index) sets with $\card(A)=\dim(U)$ and $\card(B)=\dim(V)$. Let $\{u_\alpha: \alpha\in A\}$ be a (Hamel) basis of $U$ and $
	\{u_\alpha: \alpha\in A\}\cup\{v_\beta: \beta\in B\setminus A\}$
	be its extension to a (Hamel) basis of $V$ (Theorem~\ref{T: Existence of Basis}). Let $\widehat{L}: V\mapsto W$ be a map defined by $\widehat{L}(u_\alpha)=L(u_\alpha)$ for $\alpha\in A$ and anyhow (for example, $\widehat{L}(v_\beta)=0$) for $\beta\in B\setminus A$. Then $\widehat{L}$  is an extension of $L$ we are looking for.
\end{proof}
\begin{definition}[Duals and Co-Duals]\label{D: Duals and Co-Duals} Let $\mathcal O\in\mathcal L(V)$ and  $\mathcal O^*\in\mathcal L(V^*)$ be two operators such that  $\mathcal O^*(T)=T\, \circ\, \mathcal O$ for all $T\in V^*$. Then we say that the operator $\mathcal O^*$ is the \emph{dual} of $\mathcal O$ and that  $\mathcal O$ is the \emph{co-dual} (or \emph{transposed}) of $\mathcal O^*$. We sometimes use the notation $\mathcal O= {^t(\mathcal O^*)}$.
\end{definition}

\begin{remarks}[Bracket Notation]\label{R: Bracket Notation} We often write $\bra T, v\ket$ instead of $T(v)$ for the evaluation of $T\in V^*$ at $v\in V$. In this \emph{bracket notation} the above definition can be summirized as follows: $\bra \mathcal O^*(T), v\ket=\bra T, \mathcal O(v)\ket$ for all $v\in V$ and all $T\in V^*$. 
\end{remarks}

\begin{theorem}[Surjective Dual]\label{T: Surjective Dual}  Let $V$ be  a vector space and $V^*$ be its (algebraic) dual. Then $\mathcal O\in\mathcal L(V)$ is injective \ifff  its dual $\mathcal O^*\in\mathcal L(V^*)$ is surjective. 
\end{theorem}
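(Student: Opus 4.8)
The plan is to prove the two implications separately, in both cases leaning on the Extension Principle (Theorem~\ref{T: Extension Principle}) as the essential tool; this is precisely the place where infinite-dimensionality enters, since that theorem rests on the existence of Hamel bases (Theorem~\ref{T: Existence of Basis}).

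First I would show that if $\mathcal O$ is injective, then $\mathcal O^*$ is surjective. Fix an arbitrary $S\in V^*$; I want $T\in V^*$ with $T\circ\mathcal O=S$. Since $\mathcal O$ is injective, it restricts to a linear bijection $\mathcal O\colon V\to\ran\mathcal O$ with linear inverse $\mathcal O^{-1}\colon\ran\mathcal O\to V$. Set $T_0=S\circ\mathcal O^{-1}$, a linear functional on the subspace $\ran\mathcal O\densesubspace V$. By the Extension Principle, $T_0$ extends to some $T\in V^*$. Then for every $v\in V$ one has $\bra\mathcal O^*(T),v\ket=\bra T,\mathcal O(v)\ket=T_0(\mathcal O(v))=S(\mathcal O^{-1}\mathcal O(v))=\bra S,v\ket$, so $\mathcal O^*(T)=S$, and $\mathcal O^*$ is surjective.

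Next I would prove the converse by contraposition: if $\mathcal O$ is not injective, then $\mathcal O^*$ is not surjective. Pick $v_0\in V$ with $v_0\neq 0$ and $\mathcal O(v_0)=0$. Using the Extension Principle to extend the functional $c v_0\mapsto c$ defined on ${\rm span}\{v_0\}$, choose $S\in V^*$ with $\bra S,v_0\ket=1$. For any $T\in V^*$ we get $\bra\mathcal O^*(T),v_0\ket=\bra T,\mathcal O(v_0)\ket=\bra T,0\ket=0\neq 1=\bra S,v_0\ket$, so $S\notin\ran\mathcal O^*$; hence $\mathcal O^*$ is not surjective. (Equivalently, without contraposition: if $\mathcal O^*$ is surjective and $\mathcal O(v_0)=0$, write $S=\mathcal O^*(T)$ for an $S$ with $\bra S,v_0\ket=1$, so $1=\bra T,\mathcal O(v_0)\ket=0$, forcing $v_0=0$.)

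I do not expect a genuine obstacle: once the Extension Principle is available, both directions are short and essentially the finite-dimensional argument. The only point worth flagging is that both halves hinge on extending a linear functional from a subspace to all of $V$, which in the infinite-dimensional setting is not automatic and relies on Hamel bases (and thereby on Zorn's Lemma); this is exactly why a statement that is routine in finite dimensions merits separate treatment here.
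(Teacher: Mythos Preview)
Your proof is correct and matches the paper's approach essentially line for line: the forward direction is identical (define the functional on $\ran\mathcal O$ via the inverse and extend by Theorem~\ref{T: Extension Principle}), and your parenthetical direct version of the converse is exactly what the paper does. The only cosmetic difference is that the paper phrases the converse directly---showing $T(v)=0$ for all $T\in V^*$ forces $v=0$---without explicitly naming the Extension Principle at that step, whereas you make that dependence explicit.
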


\begin{proof} 
	\begin{Pf-enum}
		
		\item[($\Rightarrow$)] Let $T \in V^*$. We have to show that the equation $\mathcal O^*(\Lambda)=T$ has a solution $\Lambda$ in $V^*$. Indeed, define $\Phi: \ran(\mathcal O)\mapsto\mathbb K$ by $\Phi(\mathcal O(v))=T(v)$ for all $v\in V$, where $ \ran(\mathcal O)$ stands for the range of $\mathcal O$. We observe that $\Phi$ is well-defined, because $\mathcal O$ is injective  by assumption. It is clear that $\Phi\in (\ran(\mathcal O))^*$, because $T$ is linear. By the Extension Principle (Theorem~\ref{T: Extension Principle}), $\Phi$ can be extended to some $\Lambda\in V^*$. Thus $\mathcal O^*(\Lambda)(v)=\Lambda(\mathcal O(v))=\Phi(\mathcal O(v))=T(v)$ for all $v\in V$. So, $\mathcal O^*(\Lambda)=T$, as required. 
		
		\item[($\Leftarrow$)] Let $\mathcal O(v)=0$ for some $v\in V$. We have to show that $v=0$. Indeed, let $T\in V^*$. Since $\mathcal O^*$ is surjective, there exists $\Lambda\in V^*$  such that $T= \mathcal O^*(\Lambda)$. Thus $T(v)= \mathcal O^*(\Lambda(v))=\Lambda(\mathcal O(v))=\Lambda(0)=0$. Thus $T(v)=0$ for all $T\in V^*$ implying $v=0$.
	\end{Pf-enum}
\end{proof}

The above result gives rise to the concept of a \emph{regular operator} (used in Todorov~\cite{tdTod95} in the particular case of $V=\mathcal D(\R^d)$ (Example~\ref{Ex:  D(Omega) and its Dual}).
\begin{definition}[Regular Operators]\label{D: Regular Operators} An operator $\mathcal O^*$ in $\mathcal L(V^*)$ is \emph{regular}  if $\mathcal O^*$ has an injective co-dual (transposed)  $\mathcal O\in\mathcal L(V)$. 
\end{definition}

\begin{corollary}[Solvability]\label{C: Solvability}  Let $V$ be  a vector space and $V^*$ be its (algebraic) dual. Let $\mathcal O^*\in\mathcal L(V^*)$ be a regular operator. Then the equation $\mathcal O^*(\Lambda)=T$ is \textbf{solvable} in $V^*$  in the sense that for every choice of $T\in V^*$  there exists $\Lambda\in V^*$ such that $\mathcal O^*(\Lambda)=T$.
\end{corollary}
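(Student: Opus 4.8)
The plan is to deduce the statement immediately from Theorem~\ref{T: Surjective Dual} and Definition~\ref{D: Regular Operators}, with essentially no new work. First I would unwind the hypothesis: to say that $\mathcal O^*\in\mathcal L(V^*)$ is \emph{regular} means, by Definition~\ref{D: Regular Operators}, that $\mathcal O^*$ admits a co-dual $\mathcal O\in\mathcal L(V)$ — that is, an operator satisfying $\mathcal O^*(T)=T\circ\mathcal O$ for every $T\in V^*$ — and that this $\mathcal O$ is injective. The point worth noting at this step is that the co-dual of $\mathcal O^*$ is precisely the operator whose dual (in the sense of Definition~\ref{D: Duals and Co-Duals}) is $\mathcal O^*$, so the hypothesis of Theorem~\ref{T: Surjective Dual} is literally satisfied.

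Next I would invoke the forward implication ($\Rightarrow$) of Theorem~\ref{T: Surjective Dual}: since $\mathcal O$ is injective, its dual $\mathcal O^*$ is surjective as a map $V^*\to V^*$. Finally I would translate surjectivity into the equational language of the statement: given an arbitrary $T\in V^*$, surjectivity of $\mathcal O^*$ furnishes some $\Lambda\in V^*$ with $\mathcal O^*(\Lambda)=T$, which is exactly the claimed solvability of $\mathcal O^*(\Lambda)=T$ in $V^*$.

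I do not expect any genuine obstacle here; the whole content of the corollary is the $(\Rightarrow)$ direction of Theorem~\ref{T: Surjective Dual}, repackaged as an existence-of-solutions statement, and the role of Definition~\ref{D: Regular Operators} is only to isolate the exact hypothesis ("injective co-dual") under which that direction applies. The one remark I would add for the reader is that the solution $\Lambda$ is in general highly non-unique, since regularity of $\mathcal O^*$ says nothing about the injectivity of $\mathcal O^*$ itself (the kernel of $\mathcal O^*$ can be large), so "solvable" should not be read as "uniquely solvable".
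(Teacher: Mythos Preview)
Your proposal is correct and matches the paper's own proof, which reads simply ``An immediate consequence of Theorem~\ref{T: Surjective Dual}.'' You have merely spelled out that immediate consequence in full (unwinding Definition~\ref{D: Regular Operators} and invoking the $(\Rightarrow)$ direction), together with a harmless remark on non-uniqueness; there is no substantive difference in approach.
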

\begin{proof} An immediate consequence of Theorem~\ref{T: Surjective Dual}.
\end{proof}

\section{Coordinate Isomorphism}\label{S: Coordinate Isomorphism}

We discuss vector spaces $\mathbb K_0^S$, which are  infinite-dimensional counterpart of the familiar vector spaces $\mathbb K^d$.

\begin{definition}[The Space $\mathbb K_0^S$] \label{D: The Space K0S} Let $\mathbb K$ be a field and $S$ be a non-empty set (well-ordered if desired).  
	\begin{D-enum}
		\item We denote by $\mathbb K_0^S$ the set of all functions $f:S\mapsto\mathbb K$ with finite support $\textrm{supp}(f)=\{s\in S: f(s)\not=0\}$.
		
		\item For each $s\in S$ we define the net $e_s: S\mapsto\mathbb K$ by $
		e_s(t)=\begin{cases}1, &\text{if $s=t$,}\\
			0, &\text {if $s\not=t$},
		\end{cases}$ or simply, $e_s(t)=\delta_{st}$ for short. We refer to the set $\{e_s: s\in S\}$  as the \emph{standard (Hamel) basis} of $\mathbb K_0^S$.
		
		\item Let $(\mathbb K_0^S)^*$ be the (algebraic) dual of $\mathbb K_0^S$ and let $\{\Phi_r: r\in S\}$ be a subset of $(\mathbb K_0^S)^*$ defined by $
		\Phi_r(e_s)=\delta_{rs}$.
		The subspace ${(\mathbb K_0^S)_*}={\rm span}\{\Phi_r: r\in S\}$ of  $(\mathbb K_0^S)^*$ is the \emph{restricted dual} of  $\mathbb K_0^S$ (Definition~\ref{D: Embeddings & Restricted Duals}). The mapping $\sigma: \mathbb K_0^S\mapsto (\mathbb K_0^S)^*$, defined by $\sigma(e_s)=\Phi_s$ for all $s\in S$, is the \emph{vector space embedding} of $\mathbb K_0^S$ into $(\mathbb K_0^S)^*$, which will be written simply as  $\mathbb K_0^S\subset(\mathbb K_0^S)^*$.
	\end{D-enum}
\end{definition} 
\begin{theorem}[Properties of $\mathbb K_0^S$]\label{T: Properties of K0S} Let $\mathbb K$ be a field and $S$ be a non-empty set (as above). Then:
	\begin{T-enum}
		\item $\mathbb K_0^S$ is a \emph{vector space over} $\mathbb K$ and $\{e_s: s\in S\}$ is a bases for $\mathbb K_0^S$. Consequently, $\dim\mathbb K_0^S=\card\, S$ and  $\card\, \mathbb K_0^S=\max\{\card\, S, \card\,\mathbb K\}$.
		\item $\dim(\mathbb K_0^S)^*=\card{(\mathbb K_0^S)^*}=\max\{(\card\, S)_+, \card\,\mathbb K\}$.
	\end{T-enum}
\end{theorem}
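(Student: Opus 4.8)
The plan is to reduce both parts to results already in hand: the basis criterion of Definition~\ref{D: Basis and Dimension}, the cardinality formula of Lemma~\ref{L: Span}, and the dimension-of-dual formula of Theorem~\ref{T: Dimension of Dual Space}. Throughout I would keep the standing assumption — matching the setting in which the theorem is used, $\mathbb K=\R$ or $\C$ and $S$ infinite — that $\mathbb K$ is infinite (and for part (ii) that $\mathbb K_0^S$ is infinite-dimensional), so that those formulas apply; I would flag the degenerate finite case as the only point needing a word of care.

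For part (i), I would first observe that $\mathbb K_0^S$ is a linear subspace of the product space $\mathbb K^S$: $\supp(f+g)\subseteq\supp(f)\cup\supp(g)$, which is finite when both summands have finite support, $\supp(\lambda f)\subseteq\supp(f)$, and the zero function has empty support. Next I would check that $\{e_s:s\in S\}$ is a Hamel basis. It spans, since every $f\in\mathbb K_0^S$ is the finite combination $f=\sum_{s\in\supp(f)}f(s)\,e_s$; and it is free, since a finite relation $\sum_{i=1}^{n}c_i e_{s_i}=0$ with distinct $s_i$, evaluated at $s_j$, forces $c_j=0$. Because $s\mapsto e_s$ is injective ($e_s(s)=1\neq 0=e_t(s)$ for $s\neq t$), it follows that $\dim\mathbb K_0^S=\card\{e_s:s\in S\}=\card\,S$. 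The cardinality statement is then just Lemma~\ref{L: Span} applied to $V=\mathbb K_0^S$: $\card\,\mathbb K_0^S=\max\{\dim\mathbb K_0^S,\card\,\mathbb K\}=\max\{\card\,S,\card\,\mathbb K\}$.

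For part (ii), I would apply Theorem~\ref{T: Dimension of Dual Space} to $V=\mathbb K_0^S$, using part (i), to get $\dim(\mathbb K_0^S)^*=\max\{2^{\dim\mathbb K_0^S},\card\,\mathbb K\}=\max\{(\card\,S)_+,\card\,\mathbb K\}$, where the last equality is the GCH identity $2^\kappa=\kappa_+$. To upgrade this to a cardinality equality, I would apply Lemma~\ref{L: Span} once more, now to the $\mathbb K$-vector space $(\mathbb K_0^S)^*$, obtaining $\card(\mathbb K_0^S)^*=\max\{\dim(\mathbb K_0^S)^*,\card\,\mathbb K\}$; since $\dim(\mathbb K_0^S)^*\geq\card\,\mathbb K$ holds trivially (as noted right after Theorem~\ref{T: Dimension of Dual Space}), this maximum collapses to $\dim(\mathbb K_0^S)^*$, giving the common value $\max\{(\card\,S)_+,\card\,\mathbb K\}$ claimed for both $\dim(\mathbb K_0^S)^*$ and $\card(\mathbb K_0^S)^*$.

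There is no real obstacle here: the one substantive input, namely $\dim V^*=(\card\,\mathbb K)^{\dim V}$, is imported through Theorem~\ref{T: Dimension of Dual Space} (ultimately from Jacobson), and everything else is bookkeeping with cardinal arithmetic. The only subtlety I would be explicit about is the degenerate situation in which $S$ and $\mathbb K$ are both finite, where $\mathbb K_0^S=\mathbb K^S$ is a finite set and the cited formulas are to be read under the standing infinite-case convention; stating this at the outset keeps the argument clean.
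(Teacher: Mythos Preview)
Your proposal is correct and follows essentially the same approach as the paper: part (i) is the direct verification that $\{e_s\}$ is a Hamel basis followed by Lemma~\ref{L: Span}, and part (ii) is Theorem~\ref{T: Dimension of Dual Space} plus a second application of Lemma~\ref{L: Span}. Your version is simply more explicit in spelling out the span/independence check and in flagging the finite degenerate case, which the paper handles by declaring part (i) ``immediate from the definition.''
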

\begin{proof} The part (i) follows immediately from the definition of the spaces $\mathbb K_0^S$ and Lemma~\ref{L: Span}. For part (ii) we calculate: $\dim\, (\mathbb K_0^S)^*=
	\max\big\{(\dim \mathbb K_0^S)_+, \card\, \mathbb K\big\}=\max\big\{(\card\, S)_+,  \card\, \mathbb K\}$ by Theorem~\ref{T: Dimension of Dual Space}. Next, we apply Lemma~\ref{L: Span} in the particular case $V=(\mathbb K_0^S)^*$ and calculate: $ \card (\mathbb K_0^S)^*=\max\{\dim (\mathbb K_0^S)^*,  \card\, \mathbb K\}=\max\big\{(\max\big\{(\card\, S)_+,  \card\, \mathbb K\},  \card\, \mathbb K\big\}
		=\max\big\{(\card\, S)_+,  \card \,\mathbb K\}$
as required.
\end{proof}	
Notice that if $S$ is an ordered finite set,  the space $\mathbb K_0^S$ reduces to the familiar $\mathbb K^d$, where $d=\card\, S$.

\begin{theorem}[Coordinate Isomorphism]\label{T: Coordinate Isomorphism} Let $\mathbb K$ be a field, $V$ be a vector space over $\mathbb K$ (as before).  Let $S$ be a set of $\card\, S=\dim V$ and $\mathcal B=\{v_s: s\in S\}$ be a basis of $V$. Then:
	\begin{T-enum}
		\item   The mapping  $\Gamma: V\mapsto \mathbb K_0^S$, defined by $\Gamma(v)=f$, where $v=\sum_{s\in S} f(s)\, v_s$,  is a vector isomorphism. We call $\Gamma$ a \textbf{coordinate isomorphism} and $f$ the \textbf{coordinate function} of $v$ relative to the basis $\mathcal B$ (sometimes the notation $f_v$ or even $f_{v,\mathcal B}$ should be used instead of $f$). In the particular case of $S=\mathcal B$, we have  $\Gamma: V\mapsto \mathbb K_0^\mathcal B$, where $\Gamma(v)=f$ and $v=\sum_{w\in \mathcal B} f(w)\, w$. 
		
		\item For every $s\in S$ we have $\Gamma^{-1}(e_s)=v_s$, where $\{e_s: s\in S\}$ is the standard basis of $\mathbb K_0^S$ (Definition~\ref{D: The Space K0S}). 
	\end{T-enum}
\end{theorem}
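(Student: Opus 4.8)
The plan is to verify directly that $\Gamma$ is a well-defined bijective linear map; the only genuinely substantive point is well-definedness, which rests on the uniqueness clause of Lemma~\ref{L: Span}, and everything else is routine bookkeeping.

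First I would check that $\Gamma$ is well-defined. Given $v\in V$, Lemma~\ref{L: Span} together with the convention $\spm(0)=\varnothing$ from Definition~\ref{D: Spectrum} supplies a unique finite set $\spm(v)\subseteq S$ and unique scalars $c_s\in\mathbb K\setminus\{0\}$, $s\in\spm(v)$, with $v=\sum_{s\in\spm(v)}c_s v_s$. Setting $f(s)=c_s$ for $s\in\spm(v)$ and $f(s)=0$ otherwise produces a function $f\colon S\to\mathbb K$ whose support is exactly $\spm(v)$, hence finite, so $f\in\mathbb K_0^S$; the uniqueness guarantees that $f$ does not depend on any choices, so $\Gamma(v)=f$ is unambiguous and satisfies $v=\sum_{s\in S}f(s)v_s$.

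Next I would establish that $\Gamma$ is a linear bijection. If $v=\sum_s f(s)v_s$ and $w=\sum_s g(s)v_s$ with $f=\Gamma(v)$ and $g=\Gamma(w)$, then $v+w=\sum_s (f(s)+g(s))v_s$ and $\lambda v=\sum_s \lambda f(s)v_s$ for $\lambda\in\mathbb K$ — both still finite sums — so the uniqueness just invoked gives $\Gamma(v+w)=f+g=\Gamma(v)+\Gamma(w)$ and $\Gamma(\lambda v)=\lambda\Gamma(v)$. For injectivity, $\Gamma(v)=0$ forces $v=\sum_s 0\cdot v_s=0$, so $\ker\Gamma=\{0\}$. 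For surjectivity, given any $f\in\mathbb K_0^S$, set $v=\sum_{s\in\supp(f)}f(s)v_s\in V$, a finite sum; then $\Gamma(v)=f$ by construction. Hence $\Gamma$ is a vector isomorphism, which justifies calling $f$ the coordinate function of $v$ relative to $\mathcal B$, and the case $S=\mathcal B$ is simply the specialization $v_w=w$. Finally, part (ii) is then immediate: by Definition~\ref{D: The Space K0S} the standard basis vector $e_s\in\mathbb K_0^S$ satisfies $e_s(t)=\delta_{st}$, so $\Gamma^{-1}(e_s)=\sum_{t\in S}e_s(t)v_t=v_s$, equivalently $\Gamma(v_s)=e_s$.

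I do not expect a real obstacle here. The one place that requires care — and the reason the theorem is worth stating — is that both well-definedness of $\Gamma$ and its compatibility with addition and scalar multiplication hinge on the \emph{uniqueness} half of Lemma~\ref{L: Span}, which is precisely where the defining maximality property of a Hamel basis enters.
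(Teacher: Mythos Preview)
Your proof is correct and follows essentially the same approach as the paper; indeed, the paper's own proof is much terser, merely observing that $\Gamma(v_s)=e_s$ for every $s\in S$ and concluding from this that $\Gamma$ is a linear bijection, with part (ii) then immediate. Your version spells out the well-definedness, linearity, injectivity, and surjectivity that the paper leaves implicit, which is a welcome level of detail.
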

\begin{proof}
	\begin{Pf-enum} 
		\item We observe that $\Gamma(v_s)=e_s$ for every $s\in S$. Thus $\Gamma\in\mathcal L(V, \mathbb K_0^\mathcal B)$ is a bijection. 
		\item follows directly from (ii).
	\end{Pf-enum}
\end{proof}
\begin{remark}[Well-Ordered Bases] \label{R: Well-Ordered Bases} 
	We often use matrices (including row and column matrices) to visualize the coordinate functions of the vectors and linear operators  relative to a particular basis of $V$. The \emph{matrix approach} is exceptionally popular in the cases of finite or countable dimensional vector spaces as well as in separable Hilbert spaces. Can we extend the \emph{matrix approach} to uncountable vector spaces? The answer is yes; we have to invoke the axiom of choice again in the form of its third version (Section~\ref{S: Introduction}): Every set, in particular every basis $\mathcal B$ of $V$, can be well-ordered. Alternatively, we can well-order the index set $S$ in $\mathbb K_0^S$. We call these bases \emph{well-ordered bases}.
\end{remark}
\section{Examples of Infinite-Dimensional Spaces}\label{S: Examples of Infinite-Dimensional Spaces}

We present several examples of infinite-dimensional vector spaces and their algebraic duals and demonstrate how to choose an algebraic (Hamel) basis (Theorem~\ref{T: Existence of Basis}). We shall often rely on the formulas: 
\begin{align}	
	&\card\, V=\max\{\dim V, \card\, \mathbb K\},\label{E: Cardinality}\\
	&\dim{V^*}=\max\{(\dim V)_+, \card\, \mathbb K\}\label{E: Dimension of Dual},\\
	&\card\,{V^*}=\max\{\dim V^*, \card\, \mathbb K\}\label{E: Cardinality of Dual},
\end{align}
(Lemma~\ref{L: Span} and Theorem~\ref{T: Dimension of Dual Space}) along with the shortcuts presented in Corollary~\ref{C: Two Equalities}  and Corollary~\ref{C: Two More Equalities}. Here is some advice for the order of the calculations: If $\dim(V)$ is known (or easy to calculate), we recommend the order of calculations: $\dim(V)\mapsto\card(V)\mapsto\dim(V^*)\mapsto\card(V^*)$.  If $\card(V)$ is known (or easy to calculate), we recommend:  $\card(V)\mapsto\dim(V)\mapsto\dim(V^*)\mapsto\card(V^*)$. Recall that $\dim(V)\leq\card(V)$ holds trivially.
\begin{example}[Hamel Example]\label{Ex: Hamel Example} Let $\R|\mathbb Q$ denote the $\mathbb Q$-vector space of $\R$ and $(\R|\mathbb Q)^*$ stand for its (algebraic) dual. We have $\dim(\R|\mathbb Q)=\card\,\R=\mathfrak{c}$ by Corollary~\ref{C: Two Equalities}, since $\card\,\mathbb Q<\card\,\R$. Also, $\dim{(\R|\mathbb Q)^*}=\max\{\mathfrak{c}_+, \aleph_0\}=\mathfrak{c}_+$ by (\ref{E: Dimension of Dual}) and $\card{(\R|\mathbb Q)^*}=\mathfrak{c}_+$ by Corollary~\ref{C: Two More Equalities}, since again, $\card\,\mathbb Q<\card\,\R$.  For the original source of this example, we refer to Hamel~\cite{gHamel}.
\end{example}

\begin{example}[The Space $\R|\mathcal A$ and its Dual]\label{Ex: R|A and its Dual} Let $\mathcal A$ denote the field of algebraic real numbers. Let $\R|\mathcal A$ denote the $\mathcal A$-vector space of $\R$ and $(\R|\mathcal A)^*$ stand for its dual. As in the previous example, we have $\dim(\R|\mathcal A)=\card\,\R=\mathfrak{c}$ and $\dim{(\R|\mathbb Q)^*}=\card{(\R|\mathcal A)^*}=\mathfrak{c}_+$ since $\card\,\mathcal A=\card\,\mathbb Q<\card\,\R$.
\end{example}

\begin{example}[$\R^\mathbb N$ and its Dual]\label{Ex: RN and its Dual} \begin{itemize}
		\item Let $\R^\mathbb N$  denote the $\R$-vector space of all sequences in $\R$ and  $(\R^\mathbb N)^*$ denote its dual space. We show that $\dim(\R^\mathbb N)=\card(\R^\mathbb N)=\mathfrak{c}$ and $\dim{(\R^\mathbb N)^*}=\card{(\R^\mathbb N)^*}=\mathfrak{c}_+$. Indeed, $\card(\R^\mathbb N)=\mathfrak{c}^{\aleph_0}= (2^{\aleph_0})^{\aleph_0}=2^{\aleph_0\aleph_0}=2^{\aleph_0}=\mathfrak{c}$ (the last step is due to the CH, Section~\ref{S: Notations}). On the other hand, $\dim(\R^\mathbb N)\leq\mathfrak{c}$ holds trivially. To show $\dim(\R^\mathbb N)\geq\mathfrak{c}$, observe that the subset $E=\{(\frac{1}{r^n}): r\in\R, r\not=0\}$ of $\R^\mathbb N$ is a free set of $\R^\mathbb N$, where $(\frac{1}{r^n})=(\frac{1}{r}, \frac{1}{r^2},\dots)$.  Thus $\dim(\R^\mathbb N)=\mathfrak{c}$ (as required) and $\R^\mathbb N\cong \R_0^{\R^2}$ by Theorem~\ref{T: Coordinate Isomorphism}, since $\card(\R^2)=\mathfrak{c}$. Consequently,  $E$ can be extended to a Hamel basis, say,  $\mathcal B=\{g_{(r,s)}: (r, s)\in\R^2,\, (r, s)\not=(0, 0)\}$ of $\R^\mathbb N$ by Theorem~\ref{T: Existence of Basis}, where $g_{(r,0)}=(\frac{1}{r^n})$ for all  $r\in\R, r\not=0$. 
		
		\item Next, $\dim(\R^\mathbb N)^*=\max\big\{\mathfrak{c}_+, \mathfrak{c}\big\}= c_+$ by (\ref{E: Dimension of Dual}) and $\card\, (\R^\mathbb N)^*=\mathfrak{c}_+$ by Corollary~\ref{C: Two More Equalities}, since $\card\, \R<\dim(\R^\mathbb N)^*$. Consequently,  $(\R^\mathbb N)^*\cong \R_0^{\mathcal P(\R^2)}$ by Theorem~\ref{T: Coordinate Isomorphism}. Let us consider the subset
		\[
		{E^*}=\big\{\varepsilon_{\{(a, b)\}}: (a, b)\in\R^2, (a, b)\not=(0, 0)\big\},
		\] 
		of $(\R^\mathbb N)^*$, defined by 
		$
		\varepsilon_{\{(a, b)\}}(g_{(r, s)})=\begin{cases} 1, &\text{if $(a, b)=(r, s)$},\\
			0, & \text{otherwise}.
		\end{cases}
		$ We observe that  $E^*$ is a free set of  $(\R^\mathbb N)^*$.  Thus $E^*$ can be extended to a basis $\mathcal B^*=\big\{\varepsilon_A: A\in\mathcal P(\R^2)\big\}$ of $(\R^\mathbb N)^*$
		by Theorem~\ref{T: Existence of Basis}, where $\varepsilon_{A}=\varepsilon_{\{(a, b)\}}$ for all $A=\{(a, b)\}$ such that $(a, b)\not=(0,0)$. The subspace 
		$(\R^\mathbb N)^*_\mathcal B={\rm span}\big\{\varepsilon_{\{(a, b)\}}: (a, b)\in\R^2, (a, b)\not=(0, 0)\big\}$ of $(\R^\mathbb N)^*$ is the \emph{restricted dual} of $\R^\mathbb N$ relative to the base $\mathcal B$ (Definition~\ref{D: Embeddings & Restricted Duals}).
	\end{itemize}
\end{example}
\begin{example}[$\C^\mathbb N$ and its Dual]\label{Ex: CN and its Dual} Let $\C^\mathbb N$  denote the $\C$-vector space  of all sequences in $\C$ and let $(\C^\mathbb N)^*$  be its algebraic dual. Similarly to the previous  example, we have  $\dim(\C^\mathbb N)=\card\,(\C^\mathbb N)=\mathfrak{c}$ and  $\dim{(\C^\mathbb N)^*}=\card{(\C^\mathbb N)^*}=\mathfrak{c}_+$  Consequently,  $\C^\mathbb N\cong \C_0^{\R^2}$ and $(\C^\mathbb N)^*\cong \C_0^{\mathcal P(\R^2)}$.
\end{example}
\begin{example}[The Space of Polynomials and its Dual] \label{Ex: The Space of Polynomials and its Dual}  Let $\C[z]$ denote the $\C$-vector space consisting of all polynomials with coefficients in $\C$. Let $(\C[z])^*$ denote the algebraic dual of $\C[z]$.\begin{itemize}  
		\item We have $\dim(\C[z])=\aleph_0$, because $(1, z, z^2,\dots)$ is (obviously) a basis of $\C[z]$. Also, $\card\,\C[z]=\max\{\dim\C[z], \card\,\C\}=
		\max\{\aleph_0, \mathfrak{c}\}=\mathfrak{c}$ by (\ref{E: Cardinality}).
		
		\item Next, $\dim(\C[z])^*=\max\{(\aleph_0)_+, \card\,\C\}=\max\{\mathfrak{c}, \mathfrak{c}\}=\mathfrak{c}$ by (\ref{E: Dimension of Dual})
		and\linebreak $\card\,{(\C[z])^*}=\max\{\dim(\C[z])^*, \card\,\C\}=\max\{\mathfrak{c}, \mathfrak{c}\}=\mathfrak{c}$ by (\ref{E: Cardinality of Dual}). Thus,  $\dim\C[z]=\aleph_0$ and $\card\,\C[z]=\dim{(\C[z])^*}=\card\,{(\C[z])^*}=\mathfrak{c}$.
	\end{itemize}
\end{example}

The next example is important for the rest of the article.

\begin{example}[$\mathcal D(\Omega)$ and its Dual]\label{Ex: D(Omega) and its Dual}  Let $\Omega$ be an open set of $\R^d$ and $\mathcal D(\Omega)=\mathcal C_0^\infty(\Omega)$ denote the space of test-functions on $\Omega$ (Vladimirov~\cite{vVladimirov}).  Let  $\mathcal D^*(\Omega)$ denote the \emph{algebraic dual} of $\mathcal D(\Omega)$ (not to be confused with the space of Schwartz distributions $\mathcal D^\prime(\Omega)$, Vladimirov~\cite{vVladimirov}).
	\begin{itemize}
		\item  We show that $\dim\mathcal D(\Omega)=\card\,\mathcal D(\Omega)=\mathfrak{c}$. Indeed,  $\card\,\mathcal D(\Omega)\leq\mathfrak{c}$, because the mapping $\psi: \mathcal D(\Omega)\mapsto\C^{\Omega\cap\mathbb Q^d}$, $\psi(\varphi)=\varphi\rest\Omega\cap\mathbb Q^d$, is an injection (due to the continuity of $\varphi$) and $\card(\C^{\Omega\cap\mathbb Q^d})=2^{\aleph_0}=\mathfrak{c}$. Thus $\dim\mathcal D(\Omega)\leq\mathfrak{c}$ since  $\dim\mathcal D(\Omega)\leq\card\, \mathcal D(\Omega)$ holds trivially. To show that $\dim\mathcal D(\Omega)\geq\mathfrak{c}$, we observe that the set $E=\{\tau_h\varphi\in\mathcal D(\Omega): h\in\R^d\}$, is a \emph{free set} of $\mathcal D(\Omega)$ (Definition~\ref{D: Basis and Dimension}). Here $\varphi\in\mathcal D(\Omega)$ is a (fixed) \emph{non-zero test-function} and $(\tau_h\varphi)(x)=\varphi(x-h)$.  Indeed, suppose $\sum_{n=1}^m\, c_n \tau_{h_n}\varphi =0$ for some $m\in\mathbb N$, some $c_n\in\C$ and some mutually distinct $h_n\in\R^d$ such that  $\tau_{h_n}\varphi\in\mathcal D(\Omega)$. The Fourier transform produces $\big(\sum_{n=1}^m\, c_n e^{-ih_nz}\big)\mathcal F[\varphi](z) =0$, where both $\sum_{n=1}^m\, c_n e^{-ih_nz}$ and $\mathcal F[\varphi](z)$ are entire functions. So, we can cancel $\mathcal F[\varphi](z)$, because the ring of entire functions forms an integral domain. However, $\sum_{n=1}^m\, c_n e^{-ih_nz}=0$ implies $c_1=\dots =c_m=0$ (as desired), since the exponents are linearly independent.  Thus $\dim\mathcal D(\Omega)=\mathfrak{c}$. Finally,  $\card\,\mathcal D(\Omega)=\max\{\dim\mathcal D(\Omega), \card\,\C\}=\max\{\mathfrak{c}, \mathfrak{c}\}=\mathfrak{c}$ (as required) by (\ref{E: Cardinality}). Consequently,  $\mathcal D(\Omega)\cong\C_0^{\R^d\times\R}$, since $\card(\R^d\times\R)=\mathfrak{c}$ (Section~\ref{S: Coordinate Isomorphism}). Next, the free set $E$ can be extended to a basis $\mathcal B_{\mathcal D(\Omega)}$ of $\mathcal D(\Omega)$ of the form $\mathcal B_{\mathcal D(\Omega)}=\{\varphi_{h,r}: (h, r)\in\R^d\times\R\}$, where $\varphi_{h,0}=\tau_h\varphi$ for all $h\in\R^d$ such that $\tau_h\varphi\in\mathcal D(\Omega)$ (Theorem~\ref{T: Existence of Basis}).
		\item We show that $\dim\mathcal D^*(\Omega)=\card\,\mathcal D^*(\Omega)=\mathfrak{c}_+$. Indeed, $\dim\mathcal D^*(\Omega)
		=$\linebreak $\max\{(\dim\mathcal D(\Omega))_+, \card\,\C\}=\max\{\mathfrak{c}_+, \mathfrak{c}\}=\mathfrak{c}_+$ by (\ref{E: Dimension of Dual}). Also, we observe that $\card\,\mathcal D^*(\Omega)= \mathfrak{c}_+$ by Corollary~\ref{C: Two More Equalities}, since $\card\,\C<\dim\mathcal D^*(\Omega)$. Thus $\mathcal D^*(\Omega)\cong\C_0^{\mathcal P(\R^d\times\R)}$, since $\card\,\mathcal P(\R^d\times\R)=\mathfrak{c}_+$ (Section~\ref{S: Coordinate Isomorphism}).
		\item Here is one particular choice of a basis $\mathcal B_{\mathcal D^*(\Omega)}$ of $\mathcal D^*(\Omega)$: Let $\{\Phi_{g,s}: (g, s)\in\R^d\times\R\}$ be the subset of $\mathcal D^*(\Omega)$ such that $\Phi_{g,s}(\varphi_{h,r})=\begin{cases}1, &\text{if $(g, s)=(h, r)$;}\\0, &\text{if $(g, s)\not=(h, r)$.}\end{cases}$ It is clear that $\{\Phi_{g,s}: (g, s)\in\R^d\times\R\}$ is a free set of $\mathcal D^*(\Omega)$ and thus, it can be extended to a basis $\mathcal B_{\mathcal D^*(\Omega)}=\{\Phi_X: X\in\mathcal P(\R^d\times\R)\}$ of $\mathcal D^*(\Omega)$ by Theorem~\ref{T: Existence of Basis}, where $\Phi_{\{(h,r)\}}=\Phi_{h,r}$ for all $(h, r)\in\R^d\times\R$. The subspace $\mathcal D^*(\Omega)_{\mathcal B_{\mathcal D(\Omega)}}={\rm span}\{\Phi_{h,r}: (h, r)\in\R^d\times\R\}$ of  $\mathcal D^*(\Omega)$ is the \emph{restricted dual} of  $\mathcal D(\Omega)$ relative to the base $\mathcal B_{\mathcal D(\Omega)}$ (Definition~\ref{D: Embeddings & Restricted Duals}).
	\end{itemize}
\end{example}

\begin{example}[$\mathcal E(\Omega)$ and its Dual]\label{Ex: E(Omega) and its Dual}  Let $\mathcal E(\Omega)=\mathcal C^\infty(\Omega)$
	and let  $\mathcal E^*(\Omega)$ denote the algebraic dual of $\mathcal E(\Omega)$. Similarly to the previous example, we have $\card\,\mathcal E(\Omega)=\dim\mathcal E(\Omega)=\mathfrak{c}$ and $\card\,\mathcal E^*(\Omega)=\dim\mathcal E^*(\Omega)=\mathfrak{c}_+$. Thus $\mathcal E(\Omega)\cong \C_0^{\R^d\times\R}$ and $\mathcal E^*(\Omega)\cong \C_0^{\mathcal P(\R^d\times\R)}$ (Section~\ref{S: Coordinate Isomorphism}). Notice that $\mathcal D^*(\Omega)\subset\mathcal E^*(\Omega)$ by Corollary~\ref{C: Embedding of Duals} \emph{in sharp contrast to} the embedding of the \emph{distributions with compact support} $\mathcal E^\prime(\Omega)\subset\mathcal D^\prime(\Omega)$ in distribution theory (Vladimirov~\cite{vVladimirov}, p. 43). 
\end{example}
\begin{example} [$\mathcal D^\prime(\Omega)$ and its Dual] \label{Ex: D'(Omega) and its Dual} Let $\mathcal D^\prime(\Omega)$ denote the space of \emph{Schwartz distributions}  on $\Omega$ (Vladimirov~\cite{vVladimirov})) and let $\mathcal D^{\prime*}(\Omega)$ denote the \emph{algebraic dual} of $\mathcal D^\prime(\Omega)$.
	\begin{itemize}
		\item  We have $\card\,\mathcal D^\prime(\Omega)=\dim\mathcal D^\prime(\Omega)=\mathfrak{c}$, because $\mathcal D^\prime(\Omega)$ is sequentially separable in the weak-star-topology (the topology of the pointwise convergence, Definition~\ref{D: The Space D*(Omega)}) and also, $\mathcal D^\prime(\Omega)$ is a countable union of weak-star-bounded sets of $\mathcal D^\prime(\Omega)$; the polars of the elements of a countable subsets of $\mathcal D(\Omega)$.  It remains to take into account that the weak-star-bounded sets of $\mathcal D^\prime(\Omega)$ are metrizable in the weak-star-topology (K\"{o}the~\cite{gKotheI}, p. 261, \S 21.3 (4)). Thus, $\mathcal D^\prime(\Omega)$ is of cardinality at most $\mathfrak{c}$. Consequently, $\mathcal D^\prime(\Omega)$ and $\C_0^{\R^d\times\R}$ are isomorphic $\C$-vector spaces, since $\card(\R^d\times\R)=\mathfrak{c}$.
		
		\item Next, we have $\card\,\mathcal D^{\prime*}(\Omega)=\dim\mathcal D^{\prime*}(\Omega)=\mathfrak{c}_+$. Indeed,  $\dim\mathcal D^{\prime*}(\Omega)=\max\{\mathfrak{c}_+, \card\,\C\}=\mathfrak{c}_+$ by (\ref{E: Dimension of Dual}) and $\card\,\mathcal D^{\prime*}(\Omega)=\max\{\mathfrak{c}_+, \mathfrak{c}\}=\mathfrak{c}_+$  by (\ref{E: Cardinality of Dual}). Consequently, $\mathcal D^{\prime*}(\Omega)$ and $\C_0^{\mathcal P(\R^d\times\R)}$ are isomorphic $\C$-vector spaces (Section~\ref{S: Coordinate Isomorphism}).
	\end{itemize}
\end{example}
\begin{example}[Hilbert Space $\mathcal H$ and its Dual]\label{Ex: Hilbert Space H and its Dual} Let $\mathcal H$ be a separable (infinite-dimensional) Hilbert space over $\C$ with an inner product $\langle\cdot, \cdot\rangle$. Let $(e_1, e_2,\dots)$ be a Hilbert (not Hamel) basis of $\mathcal H$. Recall that $H={\rm span}\{e_1, e_2,\dots\}$ is a proper inner subspace of $\mathcal H$, which is dense in $\mathcal H$ relative to the norm topology generated by the norm $\sqrt{\langle\cdot, \cdot\rangle}$. Let  $\mathcal H^*$ denote the \emph{algebraic dual} of $\mathcal H$ and  $\mathcal H^{**}$ denote the \emph{algebraic double dual} of $\mathcal H$.
	\begin{itemize}
		\item We have $\card\,\mathcal H=\dim\mathcal H=\mathfrak{c}$ and thus $\mathcal H\cong \C_0^\R$. Indeed,  $\card\,\mathcal H\geq\mathfrak{c}$ by (\ref{E: Cardinality}), since $\card\,\C=\mathfrak{c}$. The inequality $\card\,\mathcal H\leq\mathfrak{c}$ follows from fact that $\mathcal H$ is a metric space (hence, of cardinality at most $\mathfrak{c}$). Thus, $\card\,\mathcal H=\mathfrak{c}$. Next, $\dim\mathcal H\leq\card\,\mathcal H= \mathfrak{c}$ holds trivially.  To show $\dim\mathcal H\geq \mathfrak{c}$, observe that $\dim(\mathcal H)=\dim(\mathcal L^2(\R))$, because $\mathcal H$ and $\mathcal L^2(\R)$ are vector-isomorphic (as separable Hilbert spaces), $\mathcal D(\R)\subset\mathcal L^2(\R)$ and $\dim\mathcal D(\R)=\mathfrak{c}$ (Example~\ref{Ex: D(Omega) and its Dual}). Thus, $\dim\mathcal H=\mathfrak{c}$ as required.
		\item How to choose a Hamel basis of $\mathcal H$?  Unfortunately, Hilbert spaces (separable or not) do not have orthonormal Hamel bases. Suppose (seeking a contradiction) that $\mathcal B$ is an orthonormal Hamel basis of a Hilbert space $\mathcal H$. Let $(e_1, e_2,\dots)$ be an orthonormal sequences in $\mathcal H$. Consider the sequence $(v_1, v_2,\dots)$ in $\mathcal H$ by $v_1=e_1, v_2=e_1+\frac{1}{2}e_2$, etc. $v_n=\sum_{k=1}^n \frac{e_k}{k}$. Then $(v_1, v_2,\dots)$ is a Cauchy sequence, but it is divergent. Indeed, suppose (seeking a contradiction again) that $\lim_{n\mapsto\infty} ||v_n-v||$ for some $v\in\mathcal H$. We have $v=\sum_{k=1}^mc_kw_k$ for some $m\in\mathbb N$, some $c_k\in\C$ and some $w_k\in\mathcal B$. After replacing, we get $\lim_{n\mapsto\infty} ||\sum_{k=1}^n \frac{e_k}{k}-\sum_{k=1}^mc_kw_k||=0$, a contradiction, since in $(w_1,\cdots,w_m, e_1, e_2,\dots)$ there are not more than finitely many repetitions and the set $\{w_1,\cdots,w_m, e_1, e_2,\dots\}$ consisting of mutually orthogonal unit vectors only. Thus, $\mathcal H$ is non-complete, another contradiction.
		
		\item So, we have to choose a non-orthonormal Hamel basis of $\mathcal H$, which is desirable to be as much close to an orthonormal as possible. One way to do this is to start from a Hilbert (non-Hamel) orthonormal basis $(e_1, e_2,\dots)$ of $\mathcal H$ (mentioned already above) and to extend it (non-uniquely) to a Hamel basis  $\mathcal B_\mathcal H=\{e_r: r\in\R\}$ of $\mathcal H$, by Theorem~\ref{T: Existence of Basis}. Note that the basis $\mathcal B_\mathcal H$ is non-orthogonal (although it is an extension of an orthonormal Hilbert basis). 
		\item We show now that $\card\,{\mathcal H^*}=\dim{\mathcal H^*}=\mathfrak{c}_+$ and thus $\mathcal H^*\cong \C_0^{\mathcal P(\R)}$ (Section~\ref{S: Coordinate Isomorphism}). Indeed, we have $\dim(\mathcal H^*)=\max\{(\dim V)_+, \card\,\C\}=\max\{\mathfrak{c}_+, \mathfrak{c}\}=\mathfrak{c}_+$ by (\ref{E: Dimension of Dual}). Also, $\card(\mathcal H^*)=\max\{\dim\mathcal H^*, \card\,\C\}=\max\{\mathfrak{c}_+, \mathfrak{c}\}=\mathfrak{c}_+$ by (\ref{E: Cardinality of Dual}).  Let $\{\varepsilon_s: s\in\R\}$ be the subset of $\mathcal H^*$ defined by $\varepsilon_s(e_r)=\delta_{sr}$ for all $r, s\in\R$. It is clear that $\{\varepsilon_s: s\in\R\}$ is a free set of $\mathcal H^*$ and thus  it can be extended to a basis $\mathcal B_{\mathcal H^*}=\{\varepsilon_s: s\in\mathcal P(\R)\}$ of $\mathcal H^*$. The subspace  $\mathcal H^*_{\mathcal B_\mathcal H}={\rm span}\{\varepsilon_s: s\in\R\}$ of $\mathcal H^*$ is the \emph{restricted dual} of $\mathcal H$ relative to $\mathcal B_\mathcal H$ (Definition~\ref{D: Embeddings & Restricted Duals}).
		
		\item Finally, we have $\dim{\mathcal H^{**}}$$=\card\,{\mathcal H^{**}}=(\mathfrak{c}_+)_+$ by (\ref{E: Cardinality})-(\ref{E: Cardinality of Dual}) and thus $\mathcal H^{**}\cong \C_0^{\mathcal P(\mathcal P(\R))}$ (Section~\ref{S: Coordinate Isomorphism}). Let $\iota: \mathcal H\mapsto\mathcal H^{**}$ be the \emph{canonical embedding} of $\mathcal H$ into $\mathcal H^{**}$, defined by $\iota(\varphi)(T)=T(\varphi)$ for all $T\in\mathcal H^*$ (Definition~\ref{D: Embeddings & Restricted Duals}). Then  $\iota[\mathcal B_\mathcal H]$ is a Hamel basis of $\iota[\mathcal H]$. Let $W$ be an algebraic complement of $\iota[\mathcal H]$ to $\mathcal H^{**}$, i.e.  $\mathcal H^{**}=\iota[\mathcal H]\oplus W$(Corollary~\ref{C: Algebraic Complement}). Let $\mathcal B_W=\{w_A: A\in\mathcal P(\mathcal P(\R))\setminus\R\}$ be a Hamel basis of $W$. Then $\mathcal B^{**}=\iota[\mathcal B_\mathcal H]\cup\mathcal B_W$ is a Hamel basis of $\mathcal H^{**}$. 
	\end{itemize}
\end{example}
\begin{remark}[An Alternative]\label{R: An Alternative} Alternatively to the example above, we can define a (new) inner product on $\mathcal H$ by $(v, w)=\sum_{r\in\spm(v)\,\cap\,\spm(w)}\bar{a}_rb_r$, where $v=\sum_{r\in\spm(v)}a_re_r\in\mathcal H$ and $ w=\sum_{s\in\spm(w)}b_se_s\in \mathcal H$ (Definition~\ref{D: Spectrum}). We observe that $\spm(e_r)=\{r\}$. Thus, $(e_r, e_s)=\delta_{rs}$ for all $r, s\in\R$. The latter means that $\mathcal B_\mathcal H=\{e_r: r\in\R\}$ \emph{is an orthonormal Hamel basis of} $\mathcal H$ relative to $(\cdot, \cdot)$. We observe that $(\cdot, \cdot)$ coincides with 
	$\langle\cdot, \cdot\rangle$ on $H={\rm span}\{e_1, e_2,\dots\}$, since $\mathcal B_\mathcal H=\{e_r: r\in\R\}$ is an extension of $\{e_1, e_2,\dots\}$. Notice however, that $\big(\mathcal H, (\cdot, \cdot)\big)$ is not a Hilbert space by what was explained above; it is non-complete relative to the norm $\sqrt{(\cdot,\cdot)}$. Rather, $\big(\mathcal H, (\cdot, \cdot)\big)$ is merely an infinite-dimensional inner vector space over $\C$, which admits an orthonormal Hamel basis and which shares with $\big(\mathcal H, \langle\cdot, \cdot\rangle\big)$ a common inner subspace $H$.
\end{remark}
Summarizing, the vector spaces $\mathcal D(\Omega)$, $\mathcal E(\Omega)$, $\mathcal D^\prime(\Omega)$, $\mathcal E^\prime(\Omega)$, $\mathcal L^2(\Omega)$, $l_2(\C)$, $\mathcal H$, $\C_0^{\R^d\times\R}$ and $\C_0^\R$ are mutually isomorphic and all of dimension $\mathfrak{c}$. Also,  $\mathcal D^*(\Omega)$, $\mathcal D^{\prime *}(\Omega)$, $\mathcal E^*(\Omega)$, $(\mathcal L^2(\Omega))^*$, $(l_2(\C))^*$, $\mathcal H^*$, $\C_0^{\mathcal P(\R^d\times\R)}$ and $\C_0^{\mathcal P(\R)}$ are mutually isomorphic and all of dimension $\mathfrak{c}_+$. Similarly, $\mathcal D^{**}(\Omega)$, $\mathcal D^{\prime **}(\Omega)$, $\mathcal E^{**}(\Omega)$, $(\mathcal L^2(\Omega))^{**}$, $(l_2(\C))^{**}$, $\mathcal H^{**}$ and $\C_0^{\mathcal P(\mathcal P(\R))}$ are mutually isomorphic and all of dimension $(\mathfrak{c}_+)_+$.

The next several examples are about vector spaces over a field of \emph{non-standard complex numbers} $^*\C$. The reader who is unfamiliar with non-standard analysis (Robinson~\cite{aRob66}), might skip these examples and resume the reading from Section~\ref{S: Linear Functionals in D*(Omega) as Generalized Functions: The Main Result}. Recall that for every infinite cardinal $\kappa$ there exists a unique (up to a field isomorphism) non-standard extension $^*\C$ of $\C$ in a $\kappa_+$-saturated ultra-power non-standard model with the set of individuals $\R$ (Chang \& Keisler~\cite{ChKeisl}; for a presentation we refer also to the Appendix in Lindstr\o m~\cite{tLindstrom}).  Recall as well that $^*\C$ is an algebraically closed non-Archimedean field containing $\C$ (as a subfield) with $\card(^*\C)=\kappa_+$. Also, $^*\C={^*\R}(i)$, where $^*\R$ is a $\kappa_+$-saturated real closed non-Archimedean field containing $\R$ as a subfield. We should alert the reader that the asterisk in front, ${^*\C}$, has nothing to do with the asterisk after, ${\C}^*$. 

\begin{example}[The Space $^*\C|\C$ and its Dual]\label{Ex: The Space *C|C and its Dual} Let $^*\C\,|\,\C$ denote the $\C$-vector space of $^*\C$ and $(^*\C\,|\,\C)^*$ stand for its dual. We have $\card(^*\C)=\card(^*\C|\C)=\dim(^*\C|\C)=\kappa_+$ and $\card(^*\C|\C)^*=\dim(^*\C|\C)^*=(\kappa_+)_+$. Indeed,
	\begin{itemize}
		\item If $\kappa>\aleph_0$, then
		$\dim(^*\C|\C)=\kappa_+$ by Corollary~\ref{C: Two Equalities}, since $\card\,\C<\card(^*\C)$. Next,  ${\dim(^*\C|\C)^*}=\max\{(\kappa_+)_+, \mathfrak{c}\}=(\kappa_+)_+$ by (\ref{E: Dimension of Dual}) and  ${\card(^*\C|\C)^*}=(\kappa_+)_+$ by Corollary~\ref{C: Two More Equalities}, since $\card\,\C<\dim(^*\C|\C)^*$.
		
		\item If $\kappa=\aleph_0$, then $\dim(^*\C|\C)\leq\mathfrak{c}$ holds trivially since $\card(^*\C)=(\aleph_0)_+=\mathfrak{c}$. To show that $\dim(^*\C|\C)=\mathfrak{c}$, we observe that the set $E=\{\rho^r: r\in\R\}$ is a free subset of $^*\C$ of cardinality $\mathfrak{c}$. Here  $\rho$ is a (fixed) positive infinitesimal in $^*\C$ (actually, $\rho\in{^*\R}$). The next two formulas,  $\dim(^*\C|\C)^*=\card(^*\C|\C)^*=\mathfrak{c}_+$, follow by exactly the same arguments as in case $\kappa>\aleph_0$.
	\end{itemize}
\end{example}

\begin{example}  [The Space of Non-Standard Polynomials and its Dual] \label{Ex: The Space of Non-Standard Polynomials and its Dual} Let ${^*\C}[z]$ denote the vector space over the field $^*\C$ consisting of all polynomials in one variable with coefficients in $^*\C$. Let $(^*\C[z])^*$ stand for the \emph{dual space} of ${^*\C}[z]$ (warning again, the two asterisks have different meaning). As in Example~\ref{Ex: The Space of Polynomials and its Dual},  $\dim({^*\C}[z])= \aleph_0$, because $\{1, z, z^2,\dots\}$ is (obviously) a Hamel basis of ${^*\C}[z]$, and  $\card(^*\C[z])=\dim(^*\C[z])^*=\card(^*\C[z])^*=
	\kappa_+$. 
\end{example}

Let $\kappa$ be an infinite cardinal and let $^*\mathcal E(\Omega)$ be the non-standard extension of $\mathcal E(\Omega)=\mathcal C^\infty(\Omega)$ in a $\kappa_+$-saturated ultra-power non-standard model with set of individuals $\R$ (the same non-standard framework as in Examples~\ref{Ex: The Space *C|C and its Dual}). We have $\card(^*\mathcal E(\Omega))=\kappa_+$. Indeed, $\card(^*\mathcal E(\Omega))\leq\kappa_+$ holds, because $^*\mathcal E(\Omega)=\mathcal E(\Omega)^I/\mathcal U$, where $\card(I)=\kappa$ and  $\mathcal U$ is a $\kappa_+$-good free ultrafilter on $I$ (Chang \& Keisler~\cite{ChKeisl} or Lindstr\o m~\cite{tLindstrom}). Thus, $\card(\mathcal E(\Omega)^I)=\mathfrak{c}^\kappa=\kappa_+$. On the other hand, $\card(^*\mathcal E(\Omega))\geq\kappa_+$ holds, because  $^*\mathcal E(\Omega)$ is $\kappa_+$-saturated. Thus, $\card(^*\mathcal E(\Omega))=\kappa_+$. With this in mind, we have the following example.
\begin{example}[The Space $^*\mathcal E(\Omega)|\C$ and its Dual] \label{Ex: The Space *E(Omega)|C and its Dual} 
	
	Let  $^*\mathcal E(\Omega)|\C$ denote the vector space of $^*\mathcal E(\Omega)$ over $\C$ and   $(^*\mathcal E(\Omega)|\C)^*$ be the algebraic dual of  $^*\mathcal E(\Omega)|\C$ (the asterisks in front and after  $\mathcal E(\Omega)$ have different meaning). We have $\card(^*\mathcal E(\Omega))=\card(^*\mathcal E(\Omega)|\C)=\dim(^*\mathcal E(\Omega)|\C)=\kappa_+$ and $\card{(^*\mathcal E(\Omega)|\C)^*}=\dim{(^*\mathcal E(\Omega)|\C)^*}=(\kappa_+)_+$.  Indeed,
	
	\begin{itemize}
		\item If $\kappa>\aleph_0$, then
		$\dim(^*\mathcal E(\Omega)|\C)=\kappa_+$ by Corollary~\ref{C: Two Equalities}, since $\card\,\C<\card(^*\mathcal E(\Omega))$. Next,  ${\dim(^*\mathcal E(\Omega)|\C)^*}=\max\{(\kappa_+)_+, \mathfrak{c}\}=(\kappa_+)_+$ by (\ref{E: Dimension of Dual}) and  ${\card(^*\mathcal E(\Omega)|\C)^*}=(\kappa_+)_+$ by Corollary~\ref{C: Two More Equalities}, since $\card\,\C<\dim(^*\mathcal E(\Omega)|\C)^*$.
		
		\item If $\kappa=\aleph_0$, then $\dim(^*\mathcal E(\Omega)|\C)\leq\mathfrak{c}$ holds trivially since $\card(^*\mathcal E(\Omega))=(\aleph_0)_+=\mathfrak{c}$. To show that $\dim(^*\mathcal E(\Omega)|\C)=\mathfrak{c}$, we observe that the set $E=\{e^{\lambda x}: \lambda\in\C\}$ is a free subset of $^*\mathcal E(\Omega)$ of cardinality $\mathfrak{c}$. The next two formulas,  $\dim(^*\mathcal E(\Omega)|\C)^*=\card(^*\mathcal E(\Omega)|\C)^*=\mathfrak{c}_+$, follow by exactly the same arguments as in case $\kappa>\aleph_0$.
	\end{itemize}
\end{example}

\begin{example}[The Space $^*\mathcal E(\Omega)|^*\C$ and its Dual] \label{Ex: The Space *E(Omega)|*C and its Dual}  Let $^*\mathcal E(\Omega)|^*\C$ denote the vector space of $^*\mathcal E(\Omega)$ over the field $^*\C$ and $(^*\mathcal E(\Omega)|^*\C)^*$ be its dual space. As in the previous example, we have $\card(^*\mathcal E(\Omega))=\card(^*\mathcal E(\Omega)|^*\C)=\dim(^*\mathcal E(\Omega)|^*\C)=\kappa_+$ and $\card{(^*\mathcal E(\Omega)|^*\C)^*}=\dim{(^*\mathcal E(\Omega)|^*\C)^*}=(\kappa_+)_+$. Indeed,  $\dim(^*\mathcal E(\Omega)|^*\C)\leq\kappa_+$ holds trivially, because $\card(^*\mathcal E(\Omega))=\kappa_+$ (Example~\ref{Ex: The Space *E(Omega)|C and its Dual}). To show that $\dim(^*\mathcal E(\Omega)|^*\C)=\kappa_+$, we observe that $E=\{e^{\lambda x}: \lambda\in{^*\C}\}$ is a free subset of $^*\mathcal E(\Omega)$ of cardinality $\kappa_+$. Next, $\dim{(^*\mathcal E(\Omega)|^*\C)^*}=\max\{(\kappa_+)_+, \kappa_+\}=(\kappa_+)_+$ by (\ref{E: Dimension of Dual}). Finally,  $\card{(^*\mathcal E(\Omega)|^*\C)^*}=(\kappa_+)_+$ by Corollary~\ref{C: Two More Equalities}, since $\card(^*\C)<\dim(^*\mathcal E(\Omega)|\C)^*$.
\end{example}
\section{Linear Functionals in $\mathcal D^*(\Omega)$ as Generalized Functions: The Main Result}  \label{S: Linear Functionals in D*(Omega) as Generalized Functions: The Main Result}

We supply $\mathcal D^*(\Omega)$ (Example~\ref{Ex: D(Omega) and its Dual}) with the structure of a sheaf of differential $\C$-vector spaces (and, more generally, a sheaf of differential modules over $\mathcal E(\Omega)=\mathcal C^\infty(\Omega)$). This structure is inherited from $\mathcal D(\Omega)$ by duality. Our framework is the infinite-dimensional linear algebra presented in Sections~\ref{S: Infinite-Dimensional Linear Algebra}-\ref{S: Linear Maps and Operators} applied to particular case $V=\mathcal D(\Omega)$ and its dual ${V^*}=\mathcal D^*(\Omega)$. As before, $\Omega$ stands for a (generic) open subset of $\R^d$ (Section~\ref{S: Notations}).

We are trying to convince the reader that $\mathcal D^*(\Omega)$ deserves to be treated as a space of generalized functions. In what follows the space $\mathcal L_{loc}(\Omega)$ (and more literarily, $S_\Omega[\mathcal L_{loc}(\Omega)]$ explained below) presents the \emph{set of classical functions} as apposed to the \emph{generalized functions} in $\mathcal D^*(\Omega)$. The embedding of Schwartz distributions (Vladimirov~\cite{vVladimirov}) in  $\mathcal D^*(\Omega)$ will be discussed in the next section.

Our approach is a refinement and generalization of distribution theory. That is why, starting from this section, we follow the tradition of distribution theory and use the \emph{bracket notation} mentioned in (Remark~\ref{R: Bracket Notation}): We shall write  $\langle T, \varphi\rangle$ instead of $T(\varphi)$ for the evaluation of $T\in \mathcal D^*(\Omega)$ at $\varphi\in \mathcal D(\Omega)$.

\begin{definition}[The Space $\mathcal D^*(\Omega)$]\label{D: The Space D*(Omega)}
	\begin{D-enum}
		
		\item\label{Item: Product} Let $f\in\mathcal E(\Omega)$ and $T\in\mathcal D^*(\Omega)$. We define the product $f\,T\in\mathcal D^*(\Omega)$ by $\langle f\,T,\varphi\rangle=\langle T, f\varphi\rangle$ for all $\varphi\in\mathcal D(\Omega)$.
		\item Let $X$ be an open set of $\R^d$ such that $X\subseteq\Omega$ and $T\in\mathcal D^*(\Omega)$. We define the \emph{restriction} $T\rest X\in\mathcal D^*(X)$ of $T$ on $X$ by $\langle T\!\rest \!X, \varphi\rangle=\langle T, \bar{\varphi}\rangle$ for all $\varphi\in\mathcal D(X)$, where $\bar{\varphi}$ is the extension of $\varphi$ from $X$ to $\Omega$ by zero-values on $\Omega\setminus X$. 
		
		\item Let $T_n, T\in\mathcal D^*(\Omega)$, $n\in\mathbb N$. We define the \emph{weak (pointwise) convergence} $T_n\overset{*}\mapsto T$ in $\mathcal D^*(\Omega)$ by $\lim_{n\mapsto\infty}\bra T_n,\varphi\ket=\langle T, \varphi\rangle $ for all $\varphi\in\mathcal D(\Omega)$, where $\lim_{n\mapsto\infty}$ stands for the usual limit in $\C$.
		
		\item\label{Item: Differentiation} Let $\alpha= (\alpha_1,\dots,\alpha_d)\in\mathbb N_0^d$ be a multi-index and $|\alpha|=\alpha_1+\dots+\alpha_d$. Let $\partial^\alpha=\frac{\partial^{|\alpha|}}{\partial x_1^{\alpha_1}\dots\partial x_d^{\alpha_d}}$ be the usual partial derivative operator. We define $\partial^\alpha: \mathcal D^*(\Omega)\mapsto\mathcal D^*(\Omega)$ by $\langle \partial^\alpha T, \varphi\rangle=(-1)^{|\alpha|}\bra T, \partial^\alpha\varphi\ket$ for all $\varphi\in\mathcal D(\Omega)$.
		
		\item  We define the \emph{Schwartz embedding} $S_\Omega: \mathcal L_{loc}(\Omega)\mapsto \mathcal D^*(\Omega)$ by $S_\Omega(f)=T_f$, where $\langle T_f, \varphi\rangle=\int_\Omega f(x)\varphi(x)\, dx$ for all $\varphi\in\mathcal D(\Omega)$. 
		\item  Let $X$ and  $Y$ be two open sets of $\R^d$, $\theta\in\Diff(X, Y)$ be a \emph{diffeomorphism} from $X$ to $Y$ and $J_\theta: X\to Y$,  $J_\theta=\big|\det\big(\frac{\partial\theta}{\partial x}\big)\big|$, be the corresponding \emph{Jacobian determinant}.  We define  the \emph{change of variables} $\theta_*: \mathcal{D}^*(X)\to \mathcal{D}^*(Y)$ by the formula
		$\langle\theta_*(T)(y), \varphi(y)\rangle=\langle T(x),\, (\varphi\circ\theta)(x)J_\theta(x)\rangle$ for all $\varphi\in\mathcal D(Y)$. We sometimes write $T(\theta^{-1})$ instead of $\theta_*(T)$.
	\end{D-enum}
\end{definition}
\begin{examples}[Inflections and Translations]\label{Ex: Inflections and Translations}
	\begin{D-enum}
		\item Let $X=Y=\R^d$ and $\theta(x)=-x$. We denote the \emph{inflection} $\theta_*(T)$ of $T$ by $\Check{T}$, i.e. $\langle \Check{T}, \varphi\rangle=\langle T, \Check{\varphi}\rangle$ for all $\varphi\in\mathcal D(\R^d)$, where $\Check{\varphi}(x)=\varphi(-x)$.
		\item Let (as above) $X=Y=\R^d$,  $h\in\R^d$ and let $\theta(x)=x+h$. We have $\langle(\theta_*T)(y), \varphi(y)\rangle=\langle T(x), \varphi(x+h)\rangle$ for all $\varphi\in\mathcal D(\R^d)$. It is customary to call the mapping $\tau_h: \mathcal D^*(\R^d)\mapsto\mathcal D^*(\R^d)$, $\tau_hT=\theta_*(T)$, \emph{translations} and often to write $T(y-h)$ instead of $(\theta_*T)(y)$.
	\end{D-enum}
\end{examples}
\begin{theorem}[Properties of $\mathcal D^*(\Omega)$]\label{T: Properties of D*(Omega)}
	\begin{T-enum}
		
		\item $\mathcal D^*(\Omega)$ is a \textbf{differential module} over $\mathcal E(\Omega)$. Consequently, $\mathcal D^*(\Omega)$ is a \textbf{vector space} over $\C$ and $S_\Omega[\mathcal L_{loc}(\Omega)]$ is a \textbf{vector subspace} of $\mathcal D^*(\Omega)$.
		
		\item The family $\{\mathcal D^*(\Omega)\}_{\Omega\in\mathcal T^d}$ is a \textbf{sheaf of differential modules} over $\mathcal E(\Omega)$ relative to the usual topology $\mathcal T^d$ on $\R^d$ and the restriction $\rest$ (Kaneko~\cite{aKan88}, p.16). Consequently, the family $\{\mathcal D^*(\Omega)\}_{\Omega\in\mathcal T^d}$ is a \textbf{sheaf of differential vector spaces over $\C$}. (Thus the \textbf{support}, $\supp(T)$, is well-defined for every $T\in\mathcal D^*(\Omega)$).
		
		\item The family $\{S_\Omega[\mathcal L_{loc}(\Omega)]\}_{\Omega\in\mathcal T^d}$ is a \textbf{subsheaf} of $\{\mathcal D^*(\Omega)\}_{\Omega\in\mathcal T^d}$ of both $\mathcal E(\Omega)$-modules and $\C$-vector spaces.  Thus $\supp(f)=\supp(S_\Omega(f))$ for every $f\in\mathcal L_{loc}(\Omega)$.
		
		\item \label{Item: Regular} The linear partial differential operator $P^*(x, \partial)\!\!: \mathcal D^*(\Omega)\mapsto\mathcal D^*(\Omega)$, $
		P^*(x, \partial)=\sum_{|\alpha|\leq m}c_\alpha(x)\partial^\alpha$, with $\mathcal C^\infty$-coefficients $c_\alpha\in\mathcal E(\Omega)$, is the \textbf{dual} of the operator $P(x, \partial)\!: \mathcal D(\Omega)\mapsto\mathcal D(\Omega)$, defined by $P(x, \partial)\varphi(x)=
		\sum_{|\alpha|\leq m}(-1)^{|\alpha|} \partial^\alpha\big(c_\alpha(x)\varphi(x)\big)$  (Definition~\ref{D: Duals and Co-Duals}). Consequently, $P^*(x, \partial)$ is \textbf{regular} (Definition~\ref{D: Regular Operators}) - thus \textbf{surjective} - \ifff the operator $P(x, \partial)$ is \textbf{injective}.
	\end{T-enum}
\end{theorem}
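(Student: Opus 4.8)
The plan is to verify the four assertions by unwinding the definitions in Definition~\ref{D: The Space D*(Omega)} in the bracket notation, together with a handful of standard facts about test-functions (the product $f\varphi$, smooth partitions of unity, the du~Bois-Reymond lemma). For part~(i), I would first check that $(f,T)\mapsto fT$ makes $\mathcal D^*(\Omega)$ a module over $\mathcal E(\Omega)$: after pairing with an arbitrary $\varphi\in\mathcal D(\Omega)$, the identities $(fg)T=f(gT)$, $(f+g)T=fT+gT$, $f(T+S)=fT+fS$ and $1\cdot T=T$ all reduce to the corresponding identities for the test-function $f\varphi$ (note $\supp(f\varphi)\subseteq\supp\varphi$, so $f\varphi\in\mathcal D(\Omega)$). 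The operators $\partial^\alpha$ are $\C$-linear by construction, they commute since $\partial^\alpha\partial^\beta\varphi=\partial^{\alpha+\beta}\varphi$ on test-functions, and the Leibniz rule $\partial_j(fT)=(\partial_j f)\,T+f\,\partial_j T$ follows by pairing both sides with $\varphi$ and using $\partial_j(f\varphi)=(\partial_jf)\varphi+f\,\partial_j\varphi$, which turns both sides into $-\langle T,\,f\,\partial_j\varphi\rangle$. This gives the differential-module structure; the $\C$-vector-space structure is then automatic (with $\C\subseteq\mathcal E(\Omega)$ the constants), and $S_\Omega[\mathcal L_{loc}(\Omega)]$ is a subspace because $S_\Omega$ is $\C$-linear and well-defined, the integral $\int_\Omega f\varphi\,dx$ converging since $\varphi$ is bounded with compact support and $f$ is locally integrable.

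Part~(ii) is the heart of the matter. The presheaf axioms $T\rest\Omega=T$ and $(T\rest X)\rest Y=T\rest Y$ for $Y\subseteq X\subseteq\Omega$ are immediate from the extension-by-zero description of the restriction, and the compatibility of $\rest$ with the module operations and with $\partial^\alpha$ is a direct check, using $\overline{\partial^\alpha\varphi}=\partial^\alpha\bar\varphi$ and $\overline{(f\rest X)\varphi}=f\,\bar\varphi$ for $\varphi\in\mathcal D(X)$. The two sheaf axioms over a cover $\Omega=\bigcup_i\Omega_i$ require \emph{smooth partitions of unity}: given $\varphi\in\mathcal D(\Omega)$, cover the compact set $\supp\varphi$ by finitely many $\Omega_{i_1},\dots,\Omega_{i_N}$ and choose $\chi_1,\dots,\chi_N\in\mathcal D(\Omega)$ with $\supp\chi_k$ a compact subset of $\Omega_{i_k}$ and $\sum_k\chi_k=1$ on $\supp\varphi$, so that $\varphi=\sum_k\chi_k\varphi$ with $\chi_k\varphi\in\mathcal D(\Omega_{i_k})$. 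For \emph{separation}, if $T\rest\Omega_i=0$ for all $i$, then $\langle T,\varphi\rangle=\sum_k\langle T,\chi_k\varphi\rangle=\sum_k\langle T\rest\Omega_{i_k},\chi_k\varphi\rangle=0$. For \emph{gluing}, given compatible $T_i\in\mathcal D^*(\Omega_i)$, define $\langle T,\varphi\rangle=\sum_k\langle T_{i_k},\chi_k\varphi\rangle$ and establish independence of the partition of unity by the usual refinement argument ($\chi_k\varphi=\sum_l\psi_l\chi_k\varphi$ with $\supp(\psi_l\chi_k\varphi)$ a compact subset of $\Omega_{i_k}\cap\Omega_{j_l}$, where the compatibility $T_{i_k}\rest(\Omega_{i_k}\cap\Omega_{j_l})=T_{j_l}\rest(\Omega_{i_k}\cap\Omega_{j_l})$ is invoked), after which $\C$-linearity of $T$ and $T\rest\Omega_i=T_i$ are routine. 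Well-definedness of $\supp(T)$ is the standard corollary: the union of all open $U\subseteq\Omega$ with $T\rest U=0$ is itself such a set (separation applied to its own open cover), hence the largest, and $\supp(T)$ is its complement. The same partition-of-unity mechanism passes the differential-module structure through the restrictions, giving the sheaf-of-differential-modules claim.

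For part~(iii) I would check $(S_\Omega f)\rest X=S_X(f\rest X)$ by a one-line integral computation (so the $S_\Omega$-images form a sub-presheaf), note that $S_\Omega$ is injective by the du~Bois-Reymond lemma, and observe that the sheaf condition transfers because $\mathcal L_{loc}$ is itself a sheaf under a.e.\ equivalence, so compatible local $f_i$ glue to some $f\in\mathcal L_{loc}(\Omega)$ whose image $S_\Omega(f)$ then glues the $S_{\Omega_i}(f_i)$; the equality $\supp(f)=\supp(S_\Omega(f))$ follows since $S_X(f\rest X)=0$ iff $f\rest X=0$ a.e., so the two largest open sets of vanishing coincide. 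For part~(iv), that $P^*(x,\partial)$ is the dual of $P(x,\partial)$ is the computation
\[
\langle P^*(x,\partial)T,\varphi\rangle=\sum_{|\alpha|\le m}\langle c_\alpha\,\partial^\alpha T,\varphi\rangle=\sum_{|\alpha|\le m}\langle \partial^\alpha T,\,c_\alpha\varphi\rangle=\sum_{|\alpha|\le m}(-1)^{|\alpha|}\langle T,\,\partial^\alpha(c_\alpha\varphi)\rangle=\langle T,P(x,\partial)\varphi\rangle,
\]
using items~\ref{Item: Product} and~\ref{Item: Differentiation} of Definition~\ref{D: The Space D*(Omega)}; one also records that $P(x,\partial)$ maps $\mathcal D(\Omega)$ into itself, each $\partial^\alpha(c_\alpha\varphi)$ being smooth with support inside $\supp\varphi$. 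Since $\mathcal D^*(\Omega)$ separates the points of $\mathcal D(\Omega)$, $P(x,\partial)$ is \emph{the} co-dual of $P^*(x,\partial)$, so by Definition~\ref{D: Regular Operators} $P^*(x,\partial)$ is regular iff $P(x,\partial)$ is injective, and by Theorem~\ref{T: Surjective Dual} the latter is equivalent to the surjectivity of $P^*(x,\partial)$. The only genuine obstacle in the whole argument is the partition-of-unity bookkeeping in part~(ii); everything else is a direct unwinding of definitions.
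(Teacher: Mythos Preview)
Your proposal is correct and follows essentially the same approach as the paper, only with far more detail supplied. The paper's own proof is extremely terse: for (i) it merely notes that $f\varphi\in\mathcal D(\Omega)$ whenever $f\in\mathcal E(\Omega)$ and $\varphi\in\mathcal D(\Omega)$; for (ii) it defers entirely to Oberguggenberger~\cite{MO13}; for (iii) it says only ``follows directly from (i)''; and for (iv) it gives exactly the duality computation you wrote, then invokes Theorem~\ref{T: Surjective Dual}. Your partition-of-unity argument in (ii) is precisely what the cited reference would contain, and your treatment of (iii) via the du~Bois-Reymond lemma and the sheaf property of $\mathcal L_{loc}$ is a legitimate (and more explicit) expansion of the paper's one-line claim.
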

\begin{proof}
	\begin{Pf-enum}
		\item follows from the fact that  $f\in\mathcal E(\Omega)$ and $\varphi\in\mathcal D(\Omega)$  implies $f\varphi\in\mathcal D(\Omega)$.
		
		\item For a short proof we refer to  (Oberguggenberger~\cite{MO13}, p. 10).
		
		\item follows directly  from (i). 
		
		\item We calculate  $\langle P^*(x, \partial)T, \varphi\rangle=\sum_{|\alpha|\leq m}\langle c_\alpha\partial^\alpha T, \varphi\rangle=
		\sum_{|\alpha|\leq m}\langle\partial^\alpha T, c_\alpha\varphi\rangle
		\newline=\sum_{|\alpha|\leq m}(-1)^{|\alpha|}\langle T, \partial^\alpha(c_\alpha\varphi)\rangle=
		\langle T,\, \sum_{|\alpha|\leq m}(-1)^{|\alpha|} \partial^\alpha(c_\alpha\varphi)\rangle=
\langle T, P(x, \partial)\varphi\rangle$ for all $T\in\mathcal D^*(\Omega)$ and all $\varphi\in\mathcal D(\Omega)$. Thus $P^*(x, \partial)$ is the dual of $P(x, \partial)$. The rest follows immediately from Theorem~\ref{T: Surjective Dual} applied for $V=\mathcal D(\Omega)$ and $\mathcal O=P(x, \partial)$.
	\end{Pf-enum}
\end{proof}
\begin{theorem}[The Main Result] \label{T: The Main Result} Let $P^*(x, \partial)=\sum_{|\alpha|\leq m}c_\alpha(x)\partial^\alpha$ be a regular linear partial differential operator  with $\mathcal C^\infty$-coefficients. Then the equation $P^*(x, \partial)U=T$ is \textbf{solvable} in $\mathcal D^*(\Omega)$ in the sense that for any choice of $T$ in $\mathcal D^*(\Omega)$ this equation has a solution $U$ in $\mathcal D^*(\Omega)$. 
\end{theorem}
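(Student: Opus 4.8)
The plan is to obtain the statement as the specialization of the abstract solvability result, Corollary~\ref{C: Solvability}, to the pair $V=\mathcal D(\Omega)$, $V^*=\mathcal D^*(\Omega)$. The only hypothesis to verify is that $P^*(x,\partial)$ is a regular operator in the sense of Definition~\ref{D: Regular Operators}, and this has already been arranged: by Theorem~\ref{T: Properties of D*(Omega)}~(iv) the operator $P^*(x,\partial)\in\mathcal L(\mathcal D^*(\Omega))$ is exactly the dual of the transposed operator $P(x,\partial)\in\mathcal L(\mathcal D(\Omega))$, $P(x,\partial)\varphi=\sum_{|\alpha|\leq m}(-1)^{|\alpha|}\partial^\alpha(c_\alpha\varphi)$. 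Since $P^*(x,\partial)$ is assumed regular, its co-dual $P(x,\partial)$ is injective; Theorem~\ref{T: Surjective Dual} then gives that $P^*(x,\partial)$ is surjective on $\mathcal D^*(\Omega)$, which is precisely the asserted solvability of $P^*(x,\partial)U=T$.

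It is worth unwinding this through the underlying linear algebra, since that is the methodological point of the article. Given $T\in\mathcal D^*(\Omega)$, I would first define a linear functional $\Phi$ on the subspace $\ran(P(x,\partial))$ of $\mathcal D(\Omega)$ by setting $\Phi\big(P(x,\partial)\varphi\big)=\langle T,\varphi\rangle$. This is well defined exactly because $P(x,\partial)$ is injective (this is the single place where the regularity hypothesis is used), and it is linear because $T$ is. Next I would apply the Extension Principle (Theorem~\ref{T: Extension Principle}) to extend $\Phi$ to some $U\in\mathcal D^*(\Omega)$; this extension step is the one that genuinely rests on the existence of Hamel bases (Theorem~\ref{T: Existence of Basis}, hence Zorn's Lemma). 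Then, for every $\varphi\in\mathcal D(\Omega)$,
\[
\langle P^*(x,\partial)U,\varphi\rangle=\langle U,P(x,\partial)\varphi\rangle=\Phi\big(P(x,\partial)\varphi\big)=\langle T,\varphi\rangle,
\]
so $P^*(x,\partial)U=T$.

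I expect no real obstacle inside this theorem: once Theorem~\ref{T: Surjective Dual} and Theorem~\ref{T: Properties of D*(Omega)} are available, the argument is a two-line specialization. The substantive difficulty is displaced to the \emph{verification of the regularity hypothesis} for concrete operators --- that is, to proving that the transposed operator $P(x,\partial)$ is injective on $\mathcal D(\Omega)$ for the classes named in the introduction (constant-coefficient operators, the H.\ Lewy operator, second-order elliptic operators with $\mathcal C^\infty$-coefficients, elliptic operators with analytic coefficients). That injectivity, rather than anything in the present statement, is what the later sections must establish, typically via known local solvability, hypoellipticity, or uniqueness results for the adjoint equation.
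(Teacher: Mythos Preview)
Your proposal is correct and matches the paper's own treatment almost exactly: the paper gives precisely these two proofs, first the one-line specialization of Corollary~\ref{C: Solvability} via Theorem~\ref{T: Properties of D*(Omega)}~\ref{Item: Regular}, and second the explicit construction of $\Phi$ on $\ran(P(x,\partial))$ followed by a Hamel-basis extension to $U\in\mathcal D^*(\Omega)$. Your closing remark that the real work lies in verifying injectivity of $P(x,\partial)$ for concrete classes is also exactly what the paper does in the subsequent section.
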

\begin{description}  
	\item[$\mathnormal{Proof\, 1}$]  An immediate consequence of Corollary~\ref{C: Solvability} applied for $V=\mathcal D(\Omega)$, ${V^*}=\mathcal D^*(\Omega)$, $\mathcal O= P(x, \partial)$ and ${\mathcal O^*}= P^*(x, \partial)$, since the operator $P^*(x, \partial)$ is the dual of $P(x, \partial)$ by Theorem~\ref{T: Properties of D*(Omega)}, \#~\ref{Item: Regular}.
	
	\item[$\mathnormal{Proof\, 2}$]   Here is an independent proof based on Theorem~\ref{T: Existence of Basis} only: Let $\ran (P(x, \partial))\subseteq\mathcal D(\Omega)$ denote the range of $P(x, \partial)$. We define the linear functional \linebreak
	$\Phi\!\!: \ran(P(x, \partial))\mapsto\C$ by $\bra \Phi, {P(x, \partial)}\varphi\ket=\bra T, \varphi\ket$ for all $\varphi\in\mathcal D(\Omega)$. The mapping $\Phi$ is well defined, since  ${P(x, \partial)}$ is injective by assumption, and  $\Phi\in(\ran(P(x, \partial)))^*$, because $T$ is linear. Let $\mathcal B_1$ be a (Hamel) basis of $ \ran(P(x, \partial))$. Then  $\mathcal B_1$ can be extended to a basis $\mathcal B$ of $\mathcal D^*(\Omega)$ by Theorem~\ref{T: Existence of Basis}. We define $U\in\mathcal D^*(\Omega)$ by $U(\psi)=\Phi(\psi)$ for all $\psi\in\mathcal B_1$ and $U(\psi)=0$ (or anyhow) for $\psi\in\mathcal B\setminus\mathcal B_1$. It is clear that $U$ is an extension of $\Phi$ from $\ran(P(x, \partial))$ to $\mathcal D^*(\Omega)$. Thus $\langle P^*(x, \partial)U, \varphi\rangle=\langle U,\,  {P(x, \partial)}\varphi\rangle=\langle \Phi,\,  {P(x, \partial)}\varphi\rangle=\langle T, \varphi\rangle,$ for all $\varphi\in\mathcal D(\Omega)$, as required. \qquad\qquad\qquad\qquad\qquad\qquad$\square$
\end{description} 
\section{Schwartz Distributions within $\mathcal D^*(\Omega)$:\\ Sequential Approach to Distribution Theory}\label{S: Schwartz Distributions within D*(Omega)}

We characterize the space of Schwartz distributions  $\mathcal D^\prime(\Omega)$ (Vladimirov~\cite{vVladimirov}) as a particular subspace of  $\mathcal D^*(\Omega)$ without involving the usual strong topology on the space of test-functions $\mathcal D(\Omega)$ (Vladimirov~\cite{vVladimirov}). We discuss the similarities and differences between $\mathcal D^\prime(\Omega)$ and $\mathcal D^*(\Omega)$ and give a short outline of a sequential approach to distribution theory based on our characterization. 

\begin{theorem}[Connection to Schwartz Distributions]\label{T: Connection to Schwartz Distributions}
	\begin{T-enum}

		\item\label{Item: Characterization} 
		The space $\mathcal D^\prime(\Omega)$ of \linebreak \textbf{Schwartz distributions} on $\Omega$ (Vladimirov~\cite{vVladimirov}) coincides with the \textbf{weak sequential completion} of $\mathcal L_{loc}(\Omega)$ within $\mathcal D^*(\Omega)$, i.e.
		\begin{equation}\label{E: Distributions}
			\mathcal D^\prime(\Omega)=\big\{T\in\mathcal D^*(\Omega): (\exists (T_n)\in \big(S_\Omega[\mathcal L_{loc}(\Omega)]\big)^\mathbb N)(T_n\overset{*}\mapsto T)\big\},
		\end{equation}
		where $S_\Omega[\mathcal L_{loc}(\Omega)]$  is the image of $\mathcal L_{loc}(\Omega)$ under $S_\Omega$ (Definition~\ref{D: The Space D*(Omega)}) and $\big(S_\Omega[\mathcal L_{loc}(\Omega)]\big)^\mathbb N$  denote the space of all sequences in $S_\Omega[\mathcal L_{loc}(\Omega)]$ (compare with Remark~\ref{R: Sequential Approach to Distribution Theory} below).
		
		\item  $\mathcal D^\prime(\Omega)$ is a \textbf{differential $\mathcal E(\Omega)$-submodule} of $\mathcal D^*(\Omega)$. Consequently, $\mathcal D^\prime(\Omega)$ is a \textbf{differential $\C$-vector subspace} of $\mathcal D^*(\Omega)$.
		
		\item The family $\{\mathcal D^\prime(\Omega)\}_{\Omega\in\mathcal T^d}$ is a \textbf{subsheaf} of 
		$\{\mathcal D^*(\Omega)\}_{\Omega\in\mathcal T^d}$ of differential $\mathcal E(\Omega)$-modules (and $\C$-vector spaces) (Definition~\ref{D: The Space D*(Omega)}), where $\mathcal T^d$ stands for the usual topology on $\R^d$. Consequently, the \textbf{support} $\supp(T)$ in $\mathcal D^*(\Omega)$ coincides with the usual support of $T$ in distribution theory (Vladimirov~\cite{vVladimirov}, \S 1.5, p.16) for every distribution $T$.
		
		\item The inclusion $\mathcal D^\prime(\Omega)\subset\mathcal D^*(\Omega)$ is \textbf{invariant under diffeomorphisms} $\theta\in\Diff(X, Y)$.
	\end{T-enum}
\end{theorem}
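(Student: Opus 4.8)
First I would prove part~(i), from which parts (ii)--(iv) will follow by a single approximation scheme. Write $\mathcal W(\Omega)$ for the set on the right of \eqref{E: Distributions}. The inclusion $\mathcal W(\Omega)\subseteq\mathcal D^\prime(\Omega)$ is the ``completeness'' direction: each $S_\Omega(f)$ with $f\in\mathcal L_{loc}(\Omega)$ is a Schwartz distribution, so if $T_n\overset{*}\mapsto T$ with $T_n\in S_\Omega[\mathcal L_{loc}(\Omega)]$, then $(T_n)$ is a sequence in $\mathcal D^\prime(\Omega)$ converging pointwise on $\mathcal D(\Omega)$; since $\mathcal D(\Omega)$ is barrelled (a strict inductive limit of Fr\'echet spaces $\mathcal D_K(\Omega)$), the Banach--Steinhaus theorem (applicable since $\{T_n\}$ is pointwise bounded) makes $\{T_n\}$ equicontinuous, whence the limit $T$ is continuous, i.e. $T\in\mathcal D^\prime(\Omega)$. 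This is the one step where the topology of $\mathcal D^\prime(\Omega)$ — equivalently its sequential weak completeness — is genuinely used.

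For the reverse inclusion $\mathcal D^\prime(\Omega)\subseteq\mathcal W(\Omega)$ I would exhibit an explicit approximating sequence. Fix an exhaustion $\Omega=\bigcup_n\Omega_n$ by relatively compact open sets with $\overline{\Omega_n}\subset\Omega_{n+1}$, cutoffs $\chi_n\in\mathcal D(\Omega)$ equal to $1$ on $\Omega_n$, and a mollifier $\rho\in\mathcal D(\R^d)$ with $\int\rho=1$, $\rho_\varepsilon(x)=\varepsilon^{-d}\rho(x/\varepsilon)$. For large $n$ the convolution $(\chi_n T)*\rho_{1/n}$ — formed after extending $\chi_n T\in\mathcal E^\prime(\Omega)$ by $0$ to $\R^d$ — is a function in $\mathcal D(\R^d)$ supported in $\Omega$, so it defines $T_n\in S_\Omega[\mathcal D(\Omega)]\subseteq S_\Omega[\mathcal L_{loc}(\Omega)]$, and the classical cutoff/mollification estimates give $T_n\overset{*}\mapsto T$. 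I record for later use that the approximants may be taken \emph{smooth and compactly supported}.

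With (i) in hand, parts (ii)--(iv) rest on two remarks: every operation in sight — multiplication by $f\in\mathcal E(\Omega)$, $\partial^\alpha$, restriction $\rest X$, change of variables $\theta_*$ — is weakly (pointwise) continuous on $\mathcal D^*(\Omega)$ by the defining formulas of Definition~\ref{D: The Space D*(Omega)}, and each sends the relevant classical functions to classical functions. Concretely, writing $T=\lim T_n$ with $T_n=S_\Omega(g_n)$, $g_n\in\mathcal D(\Omega)$, one gets $fT=\lim S_\Omega(fg_n)$ with $fg_n\in\mathcal L_{loc}(\Omega)$; $\partial^\alpha T=\lim S_\Omega(\partial^\alpha g_n)$ with $\partial^\alpha g_n\in\mathcal D(\Omega)$ (integration by parts uses compact support of $g_n$); $T\rest X=\lim S_X(g_n\rest X)$ with $g_n\rest X\in\mathcal L_{loc}(X)$; and $\theta_*T=\lim S_Y(g_n\circ\theta^{-1})$ with $g_n\circ\theta^{-1}\in\mathcal D(Y)$ by the classical change-of-variables formula. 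Applying part~(i) over $\Omega$, $X$ and $Y$ respectively shows that $\mathcal D^\prime(\Omega)$ is stable under products and derivatives (part~(ii)), that the restriction maps of $\{\mathcal D^*(\Omega)\}$ carry $\mathcal D^\prime(\Omega)$ into $\mathcal D^\prime(X)$, and that $\theta_*$ restricts to a bijection $\mathcal D^\prime(X)\to\mathcal D^\prime(Y)$ (part~(iv), its inverse being $(\theta^{-1})_*$).

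The only point of (iii) not covered by weak continuity is the gluing axiom: if $T\in\mathcal D^*(\Omega)$ and $T\rest\Omega_i\in\mathcal D^\prime(\Omega_i)$ for every member $\Omega_i$ of an open cover of $\Omega$, then $T\in\mathcal D^\prime(\Omega)$. I would prove this with a locally finite partition of unity $\{\psi_j\}$ subordinate to $\{\Omega_i\}$: for $\varphi\in\mathcal D(\Omega)$ only finitely many $\psi_j\varphi$ are nonzero, each lies in some $\mathcal D(\Omega_{i(j)})$, and $\langle T,\varphi\rangle=\sum_j\langle T\rest\Omega_{i(j)},\psi_j\varphi\rangle$ is a finite sum of continuous terms, which yields the required seminorm estimate for $T$ over any compact subset of $\Omega$. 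Since $\{\mathcal D^*(\Omega)\}$ is already a sheaf (Theorem~\ref{T: Properties of D*(Omega)}) and $\mathcal D^\prime$ is a sub-presheaf closed under restriction and satisfying this gluing property, $\{\mathcal D^\prime(\Omega)\}$ is a subsheaf; the two notions of support then coincide automatically, each being the complement of the union of all open subsets of $\Omega$ on which $T$ restricts to $0$. The hard part is the completeness half of (i) — that a weakly convergent sequence of $\mathcal L_{loc}$-functions must have a distributional limit — which rests squarely on barrelledness of $\mathcal D(\Omega)$ and cannot be circumvented; everything else is a routine mollification and formal bookkeeping with weakly continuous operations.
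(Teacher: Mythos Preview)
Your proof is correct. For part~(i) you and the paper proceed identically---sequential completeness of $\mathcal D'(\Omega)$ gives one inclusion, cutoff-plus-mollifier regularization the other---you simply spell out what the paper delegates to Vladimirov.

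For parts (ii)--(iv), however, you take a genuinely different route. The paper's argument is purely by identification: each operation on $\mathcal D^*(\Omega)$ (product by $f\in\mathcal E(\Omega)$, $\partial^\alpha$, restriction, diffeomorphic pushforward), when restricted to $\mathcal D'(\Omega)$, coincides with the classical distribution-theoretic operation of the same name, and the closure of $\mathcal D'(\Omega)$ under those operations, its sheaf property, and its diffeomorphism invariance are then imported wholesale from the literature. You instead \emph{derive} these closure properties from part~(i): each operation is weakly sequentially continuous on $\mathcal D^*(\Omega)$ and carries smooth compactly supported approximants to locally integrable approximants, so applying the characterization~(i) on the target space keeps the image inside $\mathcal D'$. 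You also verify the gluing axiom for the subsheaf explicitly via a partition of unity, which the paper leaves entirely implicit in its reference. Your approach is more self-contained and arguably more in the spirit of the section's advertised ``sequential approach to distribution theory''; the paper's is terser but leans more heavily on pre-existing distribution theory.
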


\begin{proof}
	\begin{Pf-enum}
		\item follows from the fact that the space of Schwartz distributions is \emph{sequentially complete} under the weak convergence and every distribution can be \emph{regularized} within $\mathcal D(\Omega)$. For a detailed proof we refer the reader to   (Vladimirov~\cite{vVladimirov}, \S 1.4, p.14 and \S 4.6, p. 80--81). 
		
		\item The product $f\,T$ in $\mathcal E(\Omega)\times\mathcal D^*(\Omega)$ (Definition~\ref{D: The Space D*(Omega)}) coincides with the product $f\,T$ in $\mathcal E(\Omega)\times\mathcal D^\prime(\Omega)$ in the case when $T$ is a distribution. 
		
		\item The definition of \emph{restriction} $T\rest X$ in $\mathcal D^*(\Omega)$ (Definition~\ref{D: The Space D*(Omega)}) coincides with the definition of \emph{restriction} $T\rest X$ in Schwartz theory of distributions (Vladimirov~\linebreak\cite{vVladimirov}, \S 1.3, p.12) in the case when $T$ is a distribution. 
		
		\item The definition of \emph{change of variables} $Q_*(T)$ in $\mathcal D^*(\Omega)$ (Definition~\ref{D: The Space D*(Omega)}) coincides with the definition of \emph{change of variables}  $Q_*(T)$ in Schwartz theory of distributions (H\"{o}rmander\cite{lHor-I}, \S 6.1) in the case when  $T$ is a distribution. 
		\end{Pf-enum}
\end{proof}
\begin{remark}[Sequential Approach to Distribution Theory]\label{R: Sequential Approach to Distribution Theory} 
	\begin{R-enum}
		\item The formula (\ref{E: Distributions}) can be written in the form:
		\begin{align}\label{E: Distributions Again}
			\mathcal D^\prime(\Omega)=\Big\{T\in\mathcal D^*(\Omega): &(\exists (f_n)\in\big(\mathcal L_{loc}(\Omega)\big)^\mathbb N)(\forall\varphi\in\mathcal D(\Omega))\\
			&\lim_{n\mapsto\infty}\int_\Omega f_n(x)\varphi(x)\, dx=\bra T, \varphi\ket\Big\}\notag,
		\end{align}
		where $\big(\mathcal L_{loc}(\Omega)\big)^\mathbb N$ denotes the space of all sequences in $\mathcal L_{loc}(\Omega)$ (Section~\ref{S: Notations}) and the ``$\lim$'' stands for the usual limit in $\C$. 
		
		\item  The space $\mathcal L_{loc}(\Omega)$ in (\ref{E: Distributions Again}) can be replaced by $\mathcal E(\Omega)=\mathcal C^\infty(\Omega)$ and even by $\mathcal D(\Omega)=\mathcal C_0^\infty(\Omega)$ (Vladimirov~\cite{vVladimirov}, \S 4.6, p.79). This gives rise to the following \emph{sequential definition of Schwartz distributions} $\mathcal D^\prime(\Omega)=F(\mathcal E(\Omega)^\mathbb N)/_\sim$. Here $F(\mathcal E(\Omega)^\mathbb N)$ denotes the set of all \emph{fundamental sequences} $(f_n)$ in $\mathcal E(\Omega)$, i.e. the sequences with the property that for every (fixed) test function $\varphi\in\mathcal D(\Omega)$ the sequence $\big(\int_\Omega f_n(x)\varphi(x)\, dx\big)$ is  fundamental (convergent) in $\C$. Also,  $\sim$ stands for the equivalence relation on $F(\mathcal E(\Omega)^\mathbb N)$ defined by $(f_n)\sim (g_n)$ if $\lim_{n\mapsto\infty}\int_\Omega \big(f_n(x)-g_n(x)\big)\varphi(x)\, dx=0$ for all $\varphi\in\mathcal D(\Omega)$. For a similar sequential approach, we refer to Lighthill~\cite{mjLighthill}.
	\end{R-enum}
\end{remark}

\begin{theorem}\label{T: D*-D' non-empty} $\mathcal D^*(\Omega)\setminus\mathcal D^\prime(\Omega)\not=\varnothing$.
\end{theorem}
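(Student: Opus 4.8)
The plan is to prove this by a straightforward cardinality count, relying only on cardinalities already computed earlier in the paper. First I would record the trivial set-theoretic inclusion $\mathcal D^\prime(\Omega)\subseteq\mathcal D^*(\Omega)$: every Schwartz distribution is, in particular, a $\C$-linear functional on $\mathcal D(\Omega)$, hence an element of the algebraic dual $\mathcal D^*(\Omega)$. Thus it suffices to show that this inclusion is \emph{proper}.

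Next I would invoke the two relevant cardinalities. By Example~\ref{Ex: D(Omega) and its Dual} we have $\card\,\mathcal D^*(\Omega)=\mathfrak c_+$ (this rests on $\dim\mathcal D(\Omega)=\mathfrak c$ together with Theorem~\ref{T: Dimension of Dual Space}, Lemma~\ref{L: Span}, and Corollary~\ref{C: Two More Equalities}). On the other hand, by Example~\ref{Ex: D'(Omega) and its Dual} we have $\card\,\mathcal D^\prime(\Omega)=\mathfrak c$, the essential point there being that $\mathcal D^\prime(\Omega)$ is sequentially separable in the weak-star topology and is a countable union of weak-star bounded (hence weak-star metrizable) subsets. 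Since $\mathfrak c<\mathfrak c_+$, a subset of cardinality $\mathfrak c$ cannot exhaust a set of cardinality $\mathfrak c_+$; therefore $\mathcal D^\prime(\Omega)\subsetneq\mathcal D^*(\Omega)$ and $\mathcal D^*(\Omega)\setminus\mathcal D^\prime(\Omega)\neq\varnothing$.

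There is essentially no obstacle here: the only nontrivial input is the bound $\card\,\mathcal D^\prime(\Omega)\le\mathfrak c$, which is already established in Example~\ref{Ex: D'(Omega) and its Dual}. As an alternative one could instead exhibit an explicit witness: fix a sequence $(\varphi_n)$ of linearly independent test-functions, extend $\{\varphi_n:n\in\mathbb N\}$ to a Hamel basis $\mathcal B$ of $\mathcal D(\Omega)$ via Theorem~\ref{T: Existence of Basis}, and define $T\in\mathcal D^*(\Omega)$ to grow rapidly along the $\varphi_n$ and to vanish on $\mathcal B\setminus\{\varphi_n:n\in\mathbb N\}$, chosen so that no sequence in $S_\Omega[\mathcal L_{loc}(\Omega)]$ converges weakly to $T$, i.e. so that $T$ violates the characterization~(\ref{E: Distributions}). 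However, verifying that last clause is a Banach--Steinhaus-type argument and is more laborious than the counting argument, so I would present the cardinality proof as the main one.
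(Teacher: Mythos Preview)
Your main argument is correct and is essentially the paper's first proof: the paper compares $\dim\mathcal D^\prime(\Omega)=\mathfrak c$ with $\dim\mathcal D^*(\Omega)=\mathfrak c_+$ and invokes the Subspace Lemma (Lemma~\ref{L: Subspace Lemma}), whereas you compare cardinalities directly, but the content is the same counting argument drawn from Examples~\ref{Ex: D(Omega) and its Dual} and~\ref{Ex: D'(Omega) and its Dual}.

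Regarding your alternative explicit construction: the paper's second proof does exactly this but in a cleaner way that avoids the Banach--Steinhaus detour you anticipate. Rather than making $T$ ``grow rapidly'' along an arbitrary independent sequence, the paper chooses a linearly independent set $\{\varphi_0,\varphi_1,\varphi_2,\dots\}\subset\mathcal D(\Omega)$ with $\varphi_n\to\varphi_0$ in the strong topology, extends it to a Hamel basis $\{\varphi_r:r\in\R\}$, and sets $T(\varphi_0)=1$, $T(\varphi_r)=0$ for $r\neq 0$. Then $\lim_{n\to\infty}T(\varphi_n)=0\neq 1=T(\varphi_0)$, so $T$ is discontinuous on $\mathcal D(\Omega)$ and hence $T\in\mathcal D^*(\Omega)\setminus\mathcal D^\prime(\Omega)$ immediately. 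The key simplification is to build the convergence into the choice of the sequence rather than to force a contradiction with uniform boundedness afterwards.
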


\begin{description}
	\item[$\mathnormal{Proof\, 1}$]  $\mathcal D^\prime(\Omega)$ is a vector subspace of $\mathcal D^*(\Omega)$, and $\dim(\mathcal D^\prime(\Omega))=
	\mathfrak{c}$, and\linebreak $\dim(\mathcal D^*(\Omega))
	=\mathfrak{c}_+$  (Example~\ref{Ex: D(Omega) and its Dual}). Thus $\mathcal D^\prime(\Omega)$ is a proper subspace of $\mathcal D^*(\Omega)$ by Lemma~\ref{L: Subspace Lemma}.
	
	\item[$\mathnormal{Proof\, 2}$]    Let $\{\varphi_0, \varphi_1, \varphi_2,\dots\}$ be a free set of $\mathcal D(\Omega)$ such that $\varphi_n\mapsto \varphi_0$ as $n\mapsto\infty$, in the strong topology of $\mathcal D(\Omega)$ (Vladimirov~\cite{vVladimirov}, \S 1.2, p.7). Since $\dim(\mathcal D(\Omega))=\mathfrak{c}$,  there exists a (Hamel) basis $\{\varphi_r: r\in\R\}$ of $\mathcal D(\Omega)$ which extends the sequence $\{\varphi_0, \varphi_1, \varphi_2,\dots\}$ by Theorem~\ref{T: Existence of Basis}. Define $T\in\mathcal D^*(\Omega)$ by $T(\varphi_0)=1$ and $T(\varphi_r)=0$,\, $r\in\R,\, r\not=0$. Thus $T\in\mathcal D^*(\Omega)\setminus\mathcal D^\prime(\Omega)$ (as required) by (Theorem~\ref{T: Connection to Schwartz Distributions}, \ref{Item: Characterization}), since 
	$\lim_{n\mapsto\infty}T(\varphi_n)=0\not=1= T(\varphi_0)$.
	
	\item[$\mathnormal{Proof\, 3}$]    For the existence of  discontinuous linear functionals (based on a Borel theorem), we refer to (Oberguggenberger~\cite{MO13}, Example~10, p. 11).
\end{description}
\qquad\qquad\qquad\qquad\qquad\qquad\qquad\qquad\qquad\qquad\qquad\qquad\qquad\qquad\qquad$\square$\newline
\indent We borrow the next definition and the following lemma and theorem from (Oberguggenberger~\cite{MO13} p.15).

\begin{definition}[Convolution]\label{D: Convolution} Let $S\in\mathcal D^*(\R^d)$ or $S\in\mathcal E^*(\R^d)$. Let $T\in\mathcal D^\prime(\R^d)$. We define the \textbf{convolution} $S\star T\in\mathcal D^*(\R^d)$ by 
	\[
	\langle S\star T, \varphi\rangle= \langle S, \Check{T}\star\varphi\rangle,
	\]
	for all $\varphi\in\mathcal D(\R^d)$, where $\Check{T}$ is the \emph{inflection} of $T$ (Example~\ref{Ex: Inflections and Translations}) and $\Check{T}*\varphi$ is the \emph{usual convolution} in the sense of distribution theory (Vladimirov~\cite{vVladimirov}, Ch. 4).  
\end{definition}

\begin{lemma}  $\partial^\alpha(S\star T)=(\partial^\alpha S)\star T= S\star (\partial^\alpha T)$ for all multi-indexes $\alpha\in\mathbb N^d_0$. 
\end{lemma}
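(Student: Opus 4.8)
The plan is to establish both equalities by evaluating each side on an arbitrary test function $\varphi\in\mathcal D(\R^d)$ and then to reduce the general multi-index statement to the first-order case $\alpha=e_j$ (the $j$-th unit multi-index), from which the case of arbitrary $\alpha$ follows by writing $\partial^\alpha$ as a composition of $|\alpha|$ first-order operators and iterating. All the pairings occurring below are of the same type already present in Definition~\ref{D: Convolution}: when $S\in\mathcal D^*(\R^d)$ we regard it --- via the embedding $\mathcal D^*(\R^d)\subset\mathcal E^*(\R^d)$ (Example~\ref{Ex: E(Omega) and its Dual}) --- as acting on $\mathcal C^\infty$-functions, so that $\langle S,\Check T\star\psi\rangle$ is meaningful for $\psi\in\mathcal D(\R^d)$; moreover the class of admissible first arguments of the convolution is closed under $\partial_j$, so $(\partial_j S)\star T$ is again defined by the same formula, and likewise $S\star(\partial_j T)$ since $\partial_j T\in\mathcal D^\prime(\R^d)$.

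Two standard ingredients are used. The first is the classical behaviour of the distribution--test-function convolution under differentiation: for $R\in\mathcal D^\prime(\R^d)$ and $\psi\in\mathcal D(\R^d)$ one has
\[
R\star\partial_j\psi=\partial_j(R\star\psi)=(\partial_j R)\star\psi
\]
(Vladimirov~\cite{vVladimirov}, Ch.~4), with $R\star\psi\in\mathcal C^\infty(\R^d)$. The second is that inflection anticommutes with each first-order derivative, i.e. $\partial_j(\Check T)=-\,\Check{\partial_j T}$; this is immediate from the definition of the inflection (Examples~\ref{Ex: Inflections and Translations}) and of $\partial_j$ on $\mathcal D^*(\R^d)$ (Definition~\ref{D: The Space D*(Omega)}), together with the chain-rule identity $\partial_j\big(\psi(-x)\big)=-(\partial_j\psi)(-x)$, the two sign reversals cancelling the minus carried by $\partial_j$.

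Now fix $\varphi\in\mathcal D(\R^d)$. By the definition of $\partial_j$ on $\mathcal D^*(\R^d)$ and then Definition~\ref{D: Convolution},
\[
\langle\partial_j(S\star T),\varphi\rangle=-\langle S\star T,\partial_j\varphi\rangle=-\langle S,\Check T\star\partial_j\varphi\rangle .
\]
On the one hand, replacing $\Check T\star\partial_j\varphi$ by $\partial_j(\Check T\star\varphi)$ and then moving $\partial_j$ back onto $S$ gives $\langle\partial_j(S\star T),\varphi\rangle=-\langle S,\partial_j(\Check T\star\varphi)\rangle=\langle\partial_j S,\Check T\star\varphi\rangle=\langle(\partial_j S)\star T,\varphi\rangle$. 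On the other hand, replacing $\Check T\star\partial_j\varphi$ by $(\partial_j\Check T)\star\varphi=-\,\Check{\partial_j T}\star\varphi$ gives $\langle\partial_j(S\star T),\varphi\rangle=\langle S,\Check{\partial_j T}\star\varphi\rangle=\langle S\star(\partial_j T),\varphi\rangle$. As $\varphi$ was arbitrary, $\partial_j(S\star T)=(\partial_j S)\star T=S\star(\partial_j T)$ in $\mathcal D^*(\R^d)$; since $\partial_j T\in\mathcal D^\prime(\R^d)$ and $\partial_j S\in\mathcal D^*(\R^d)$ (resp. $\mathcal E^*(\R^d)$), the hypotheses of Definition~\ref{D: Convolution} persist, and iterating over the $|\alpha|$ factors of $\partial^\alpha$ yields the identities for every $\alpha\in\mathbb N^d_0$.

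This is essentially a bookkeeping argument and I do not anticipate a serious obstacle; the two points requiring care are (i) checking that each intermediate object $\Check T\star\partial_j\varphi$, $\partial_j(\Check T\star\varphi)$, $(\partial_j\Check T)\star\varphi$ is the same legitimate $\mathcal C^\infty$-argument to which $S$ may be applied, and (ii) tracking the sign introduced by the inflection in the step $\partial_j\Check T=-\,\Check{\partial_j T}$, which is exactly what makes $S\star(\partial_j T)$ --- and not its negative --- come out.
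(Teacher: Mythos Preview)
Your proof is correct and follows essentially the same path as the paper's: evaluate on an arbitrary $\varphi$, unwind Definition~\ref{D: Convolution}, use the classical identity $\Check T\star\partial\varphi=\partial(\Check T\star\varphi)=(\partial\Check T)\star\varphi$, and push the derivative either onto $S$ or through the inflection onto $T$. The only cosmetic difference is that you reduce to the first-order case $\partial_j$ and iterate, whereas the paper carries the factor $(-1)^{|\alpha|}$ and treats a general multi-index in one pass; your explicit discussion of the sign in $\partial_j\Check T=-\Check{\partial_j T}$ corresponds exactly to the paper's step $(-1)^{|\alpha|}(\partial^\alpha\Check T)\star\varphi=\Check{(\partial^\alpha T)}\star\varphi$.
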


\begin{proof}
	Let $\varphi\in\mathcal D(\R^d)$. We calculate
	$\langle\partial^\alpha(S\star T), \varphi\rangle=(-1)^{|\alpha|}\langle S, \Check{T}\star\partial^\alpha\varphi\rangle=(-1)^{|\alpha|}\langle S, \partial^\alpha(\Check{T}\star\varphi)\rangle$. On the other hand, $(-1)^{|\alpha|}\langle S, \partial^\alpha(\Check{T}\star\varphi)\rangle=\langle(\partial^\alpha S)\star T, \varphi\rangle$. Thus $\partial^\alpha(S\star T)=(\partial^\alpha S)\star T$ as required. Also, $(-1)^{|\alpha|}\langle S, \partial^\alpha(\Check{T}\star\varphi)\rangle=(-1)^{|\alpha|}\langle S,(\partial^\alpha \Check{T})\star\varphi\rangle=\langle S, \Check{(\partial^\alpha T)}\star\varphi)\rangle$=$\langle S\star\partial^\alpha T, \varphi)\rangle$. Thus $\partial^\alpha(S\star T)=S\star\partial^\alpha T$ as required. 
\end{proof}
\begin{theorem}[Fundamental Solutions]\label{T: Fundamental Solutions}
	Let $c_\alpha\in\C$, $\alpha\in\mathbb N_0^d$, $|\alpha|\leq m$ and $P^*(\partial): \mathcal D^*(\R^d)\mapsto\mathcal D^*(\R^d)$, $P^*(\partial)=\sum_{|\alpha|\leq m}c_\alpha\, \partial^\alpha$, be the corresponding linear partial differential operator with \textbf{constant coefficients}. Then: 
	
	\begin{T-enum}
		\item The operator $P^*(\partial)$ has a \textbf{fundamental solution} $F\in\mathcal D^*(\Omega)$, a solution of the equation $P^*(\partial)\,F=\delta$ in $\mathcal D^*(\Omega)$.
		
		\item Let $F\in\mathcal D^*(\Omega)$ be a fundamental solution of $P^*(\partial)$ and $T\in\mathcal E^\prime(\Omega)$. Then $U=F\star T$ is a solution of $P^*(\partial)\,U=T$ in  $\mathcal D^*(\Omega)$.
		
		\item Let $F\in\mathcal D^\prime(\Omega)$ be a fundamental solution of $P^*(\partial)$ and $T\in\mathcal D^*(\Omega)$ (or even $T\in\mathcal E^*(\Omega)$). Then $U=F\star T$ is a solution of $P^*(\partial)\,U=T$ in  $\mathcal D^*(\Omega)$.
	\end{T-enum}
\end{theorem}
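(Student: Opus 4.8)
We prove the three parts in order.

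For part \textbf{(i)}, the plan is to invoke the classical Malgrange--Ehrenpreis theorem: every linear partial differential operator $P^*(\partial)$ with constant coefficients has a fundamental solution $E\in\mathcal D^\prime(\R^d)$, i.e. $P^*(\partial)E=\delta$ in $\mathcal D^\prime(\R^d)$ (see H\"ormander~\cite{lHor-II}). Since $\mathcal D^\prime(\Omega)\subset\mathcal D^*(\Omega)$ is a differential $\C$-vector subspace (Theorem~\ref{T: Connection to Schwartz Distributions}), the identity $P^*(\partial)E=\delta$ continues to hold when read inside $\mathcal D^*(\Omega)$, because the operators $\partial^\alpha$ on $\mathcal D^*(\Omega)$ restrict to the usual distributional derivatives on $\mathcal D^\prime(\Omega)$ (Theorem~\ref{T: Properties of D*(Omega)}). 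Thus $F=E$ serves as the required fundamental solution. Alternatively, one can dispense with Malgrange--Ehrenpreis altogether and cite Theorem~\ref{T: The Main Result}: every constant-coefficient operator is regular (its transpose is again a constant-coefficient operator, which is injective on $\mathcal D(\Omega)$ by the Fourier-transform argument in Example~\ref{Ex: D(Omega) and its Dual}), hence $P^*(\partial)F=\delta$ is solvable in $\mathcal D^*(\Omega)$.

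For parts \textbf{(ii)} and \textbf{(iii)}, the strategy is identical and rests on the Lemma just proved together with the defining property of convolution. Fix $\varphi\in\mathcal D(\R^d)$. Using the Lemma repeatedly (linearly extended over the multi-indices) we get
\[
\langle P^*(\partial)(F\star T),\varphi\rangle
=\langle F\star\big(P^*(\partial)T\big)\cdot\ ?,\ \ \rangle,
\]
but the cleanest route is to put the derivatives on $F$:
\[
\langle P^*(\partial)(F\star T),\varphi\rangle
=\Big\langle \big(P^*(\partial)F\big)\star T,\ \varphi\Big\rangle
=\langle \delta\star T,\ \varphi\rangle
=\langle T,\ \varphi\rangle,
\]
where the first equality is the Lemma (summed over $|\alpha|\le m$ with the constant coefficients $c_\alpha$ pulled out), the second is $P^*(\partial)F=\delta$, and the last is the identity $\delta\star T=T$, which follows by unwinding Definition~\ref{D: Convolution}: $\langle\delta\star T,\varphi\rangle=\langle\delta,\check T\star\varphi\rangle=(\check T\star\varphi)(0)=\langle T,\varphi\rangle$ (the last step being the standard evaluation-at-$0$ property of distributional convolution, Vladimirov~\cite{vVladimirov}, Ch.~4). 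Hence $U=F\star T$ solves $P^*(\partial)U=T$ in $\mathcal D^*(\Omega)$.

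The only genuine subtlety — and the step I expect to require the most care — is that convolution $S\star T$ in Definition~\ref{D: Convolution} is only defined when the \emph{second} factor lies in $\mathcal D^\prime(\R^d)$, so the expression $\delta\star T$ and the manipulations leading to it must be legitimate for the specified classes of $T$. In part (ii), $F\in\mathcal D^*(\Omega)$ is arbitrary but $T\in\mathcal E^\prime(\Omega)\subset\mathcal D^\prime(\R^d)$, so $F\star T$ is well-defined; moreover $\delta\in\mathcal D^\prime(\R^d)$, so $\delta\star T$ also makes sense and the chain above is valid. In part (iii) the roles are swapped: now $T\in\mathcal D^*(\Omega)$ (or $\mathcal E^*(\Omega)$) is arbitrary but $F\in\mathcal D^\prime(\Omega)\subset\mathcal D^\prime(\R^d)$, so one must instead write the convolution as $F\star T$ with $F$ playing the role of the $\mathcal D^\prime$-factor — that is, one uses the commuted form $\langle F\star T,\varphi\rangle=\langle T,\check F\star\varphi\rangle$, which is exactly Definition~\ref{D: Convolution} with $S:=T$ and the distributional factor $F$. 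With that reading, the Lemma still applies (it is stated for $S\in\mathcal D^*$ or $\mathcal E^*$ against $T\in\mathcal D^\prime$, which covers both configurations), and the same three-line computation yields $P^*(\partial)(F\star T)=\delta\star T=T$. So the real work is bookkeeping: making sure that in each part the $\mathcal D^\prime$-slot of every convolution is occupied by an object that actually lies in $\mathcal D^\prime(\R^d)$, after which the algebra is immediate from the Lemma and $\delta\star T=T$.
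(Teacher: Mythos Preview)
Your argument is correct and follows the paper's proof almost exactly: part (i) via regularity of constant-coefficient operators (the paper uses only this route, not Malgrange--Ehrenpreis, but your alternative is fine), and parts (ii)--(iii) via the Lemma to push $P^*(\partial)$ onto $F$ and then the identity $\delta\star T=T$.

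One small refinement worth making explicit: you write that in (iii) ``the same three-line computation'' gives $\delta\star T=T$, but in fact the verification differs between (ii) and (iii), and the paper keeps them separate. In (ii), with $T\in\mathcal E'(\Omega)$ in the $\mathcal D'$-slot, one has $\langle\delta\star T,\varphi\rangle=\langle\delta,\check T\star\varphi\rangle=(\check T\star\varphi)(0)=\langle T,\varphi\rangle$, exactly as you wrote. In (iii), however, $T\in\mathcal D^*(\Omega)$ is \emph{not} a distribution, so $\check T\star\varphi$ need not be a test function and one cannot evaluate $\delta$ on it; instead, with the roles swapped as you correctly describe, the computation is $\langle T\star\delta,\varphi\rangle=\langle T,\check\delta\star\varphi\rangle=\langle T,\delta\star\varphi\rangle=\langle T,\varphi\rangle$, using $\check\delta=\delta$ and $\delta\star\varphi=\varphi$. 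You clearly understand the role-swap, so this is only a matter of writing out the second verification rather than pointing back to the first.
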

\begin{proof}
	\begin{Pf-enum}
		\item follows directly from Theorem~\ref{T: The Main Result}, because $P^*(\partial)$ is a regular operator (Section~\ref{S: Examples of Regular Operators on D*(Omega): Solvable Partial Differential Equations}) and $\mathcal D^\prime(\Omega)\subset\mathcal D^*(\Omega)$ (thus $\delta\in\mathcal D^*(\Omega)$). In both (i) and (ii) we have $P^*(\partial)U=P^*(\partial)(F\star T)=(P^*(\partial)F)\star T)=\delta\star T=T$. The last equality, $\delta\star T=T$, is derived in (ii) and (iii) somewhat differently:
		
		\item $\langle T\star\delta,\varphi\rangle=\langle \delta, \Check{T}\star\varphi\rangle=(\Check{T}\star\varphi)(0)=\langle T,\varphi\rangle$, for all $\varphi\in\mathcal D(\R^d)$.

		\item $\langle T\star\delta,\varphi\rangle=\langle T, \Check{\delta}\star\varphi\rangle=\langle T, \delta\star\varphi\rangle=\langle T,\varphi\rangle$, for all $\varphi\in\mathcal D(\R^d)$ (or even for all $\varphi\in\mathcal E(\R^d)$).
	\end{Pf-enum}
\end{proof}
\begin{remark}[Comparison: $\mathcal D^\prime(\Omega)$ vs. $\mathcal D^*(\Omega)$]\label{R: Comparison: Dprime(Omega) vs. D*(Omega)}
	
	There are obvious \emph{similarities} between the properties of  $\mathcal D^*(\Omega)$ (Theorem~\ref{T: Properties of D*(Omega)}) and the space of Schwartz distributions $\mathcal D^\prime(\Omega)$ Vladimirov~\cite{vVladimirov} (see also Remark~\ref{R: Sequential Approach to Distribution Theory} in this paper). However, there are also \emph{essential differences}; here are some of them:
	\begin{enumerate}
		\item The discontinuous (relative to the strong topology on $\mathcal D(\Omega)$) linear functionals $T\in\mathcal D^*(\Omega)\setminus\mathcal D^\prime(\Omega)$ cannot be approached by a sequence of classical functions in the sense that there is \emph{no sequence} $(f_n)$ in $\mathcal L_{loc}(\Omega)$ such that $f_n\overset{*}{\mapsto}T$ (Definition~\ref{D: The Space D*(Omega)}). In a sense the spaces $\mathcal L_{loc}(\Omega)$ (more precisely, $S[\mathcal L_{loc}(\Omega)]$), $\mathcal D^\prime(\Omega)$ and $\mathcal D^*(\Omega)$ resemble $\mathbb Q$, $\R$ and $\C$, respectively: Every real number is the limit of some (fundamental) sequence in $\mathbb Q$, but the complex numbers of the for form $a+ib,\, b\not=0$, can not be approximated by sequences in $\mathbb Q$ (we are unaware of subspace of $\mathcal D^*(\Omega)$ which plays the role of $\mathbb Q(i)$ in the above anlalogy).
		
		\item The \emph{structural theorem} for $\mathcal D^\prime(\Omega)$ (Vladimirov~\cite{vVladimirov}, \S 2.4, p.41) fails in $\mathcal D^*(\Omega)$. Recall that the structural theorem states that for every Schwartz distribution $T\in\mathcal D^\prime(\Omega)$ and for every open set $X$ of $\R^d$, such that $X\subset\subset\Omega$, there exist a classical function $f\in\mathcal L^\infty(X)$ and a multi-index $\alpha\in\mathbb N_0^d$ such that $T=\partial^\alpha f$ in $\mathcal D^\prime(X)$. This theorem fails in $\mathcal D^*(\Omega)$.
		
		\item The convolution (Definition~\ref{D: Convolution}) \emph{fails to regularize} $T\in\mathcal D^*(\R^d)\setminus\mathcal D^\prime(\R^d)$
		in the sense that $T\star\varphi\in\mathcal E(\R^d)$ \emph{does not necessarily hold} for all $\varphi\in\mathcal D(\R^d)$.
		
		\item The direct (tensor) product in $\mathcal D^\prime(\Omega)$(Vladimirov~\cite{vVladimirov}, \S 3, p.46) also \emph{fails} in $\mathcal D^*(\Omega)$.
		
		\item From the above list it seems that the space $\mathcal D^\prime(\Omega)$ is superior over $\mathcal D^*(\Omega)$ at least from the point of view of partial differential operator theory. As we shall see in the next sections however, the regular operators $P^*(x, \partial)$ are (always) \emph{surjective} on $\mathcal D^*(\Omega)$, but their restrictions on $\mathcal D^\prime(\Omega)$ are often \emph{not}. That means that the partial differential equations of the form $P^*(x, \partial)U=T$ are always \emph{solvable} for $U$ in $\mathcal D^*(\Omega)$, but \emph{not necessarily solvable} in $\mathcal D^\prime(\Omega)$. The latter property of $\mathcal D^*(\Omega)$ is the main reason why we believe that the space $\mathcal D^*(\Omega)$ - rather than $\mathcal D^\prime(\Omega)$ - should be considered as  the \emph{natural framework} of partial differential equations, especially the linear ones with smooth coefficients. 
	\end{enumerate}
\end{remark}
\section{Three Invariant Subspaces}\label{S: Three Invariant Subspaces}

In order to prepare better for the discussion in the next section, we select three important subspaces of $\mathcal D^*(\Omega)$, which are invariant under the linear partial differential operators with $\mathcal C^\infty$-coefficients.

\begin{lemma}[Three Invariant Spaces]\label{L: Three Invariant Spaces} Let $P^*(x, \partial)\!: \mathcal D^*(\Omega)\mapsto\mathcal D^*(\Omega)$, $
	P^*(x, \partial)=\sum_{|\alpha|\leq m}c_\alpha(x)\partial^\alpha$, be a linear partial differential operator (regular or not) with $\mathcal C^\infty$-coefficients $c_\alpha\in\mathcal E(\Omega)$. The subspaces of the test-functions $\mathcal D(\Omega)=\mathcal C_0^\infty(\Omega)$, the $\mathcal C^\infty$-functions $\mathcal E(\Omega)=\mathcal C^\infty(\Omega)$ and the Schwartz distributions $\mathcal D^\prime(\Omega)$ are all invariant under 
	$P^*(x, \partial)$.
\end{lemma}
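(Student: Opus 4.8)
The plan is to treat the three subspaces one at a time, in each case reducing the action of $P^*(x,\partial)$ on $\mathcal D^*(\Omega)$ to the classical pointwise action of the very same differential expression. Recall that $\mathcal D(\Omega)\subseteq\mathcal E(\Omega)\subseteq\mathcal L_{loc}(\Omega)$, so that $\mathcal D(\Omega)$ and $\mathcal E(\Omega)$ sit inside $\mathcal D^*(\Omega)$ via the (injective) Schwartz embedding $S_\Omega$, while $\mathcal D^\prime(\Omega)\subset\mathcal D^*(\Omega)$ as in Theorem~\ref{T: Connection to Schwartz Distributions}.

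First I would handle $\mathcal E(\Omega)$ and $\mathcal D(\Omega)$ together. The key step is the pair of \emph{consistency identities}
\[
\partial^\alpha\big(S_\Omega f\big)=S_\Omega\big(\partial^\alpha f\big),\qquad c_\alpha\,S_\Omega f=S_\Omega\big(c_\alpha f\big),
\]
valid for every $f\in\mathcal E(\Omega)$, every $c_\alpha\in\mathcal E(\Omega)$ and every multi-index $\alpha$: the first is just integration by parts (no boundary terms, since test-functions are compactly supported), and the second is immediate from the definitions of the product and of $S_\Omega$ (Definition~\ref{D: The Space D*(Omega)}). Summing linearly over $|\alpha|\leq m$ yields $P^*(x,\partial)\,S_\Omega f=S_\Omega\!\big(\sum_{|\alpha|\leq m}c_\alpha\,\partial^\alpha f\big)$. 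Since $c_\alpha,f\in\mathcal C^\infty(\Omega)$, the classical function $\sum_{|\alpha|\leq m}c_\alpha\,\partial^\alpha f$ again lies in $\mathcal E(\Omega)$, so $\mathcal E(\Omega)$ is invariant. For $\varphi\in\mathcal D(\Omega)$ the same identity applies and, in addition, $\supp\!\big(\sum_{|\alpha|\leq m}c_\alpha\,\partial^\alpha\varphi\big)\subseteq\supp(\varphi)$ is compact; hence $\mathcal D(\Omega)$ is invariant as well.

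For $\mathcal D^\prime(\Omega)$ the argument is shorter: by Theorem~\ref{T: Connection to Schwartz Distributions}, part~(ii), $\mathcal D^\prime(\Omega)$ is a differential $\mathcal E(\Omega)$-submodule of $\mathcal D^*(\Omega)$, i.e.\ it is stable under every $\partial^\alpha$ and under multiplication by every $c_\alpha\in\mathcal E(\Omega)$. Since $P^*(x,\partial)=\sum_{|\alpha|\leq m}c_\alpha\,\partial^\alpha$ is a finite combination of precisely these operations, it maps $\mathcal D^\prime(\Omega)$ into itself. (Equivalently, the operations defining $P^*(x,\partial)$ on $\mathcal D^*(\Omega)$ restrict to the corresponding operations of Schwartz distribution theory, so $P^*(x,\partial)T$ computed in $\mathcal D^*(\Omega)$ agrees with the usual $P^*(x,\partial)T$ in $\mathcal D^\prime(\Omega)$.)

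I do not anticipate a genuine obstacle here. The only point needing care is the verification that the distributional action of $P^*(x,\partial)$ on an embedded smooth function, test-function, or Schwartz distribution really coincides with the classical (respectively Schwartz-distributional) action — that is, that $S_\Omega$ intertwines $\partial^\alpha$ and multiplication by $c_\alpha$ with the classical operations, and that the submodule structure of $\mathcal D^\prime(\Omega)$ is the one inherited from $\mathcal D^*(\Omega)$. Once these commutation identities are recorded, everything else is bookkeeping: linearity of $P^*(x,\partial)$, the support inclusion $\supp(c_\alpha\,\partial^\alpha\varphi)\subseteq\supp(\varphi)$, and closure of $\mathcal C^\infty(\Omega)$ under differentiation and under multiplication by $\mathcal C^\infty$-functions.
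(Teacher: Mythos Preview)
Your proposal is correct and follows essentially the same approach as the paper: the paper's proof is a one-sentence observation that the multiplication by $c_\alpha\in\mathcal E(\Omega)$ and the differentiation $\partial^\alpha$ defined on $\mathcal D^*(\Omega)$ coincide, when restricted to $\mathcal E(\Omega)$ (hence also $\mathcal D(\Omega)$), with the usual pointwise operations, and coincide on $\mathcal D^\prime(\Omega)$ with the Schwartz-theoretic ones. You have simply unpacked this observation explicitly via the consistency identities $\partial^\alpha S_\Omega f=S_\Omega(\partial^\alpha f)$ and $c_\alpha S_\Omega f=S_\Omega(c_\alpha f)$, and added the (routine) support argument for $\mathcal D(\Omega)$, which the paper leaves implicit.
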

\begin{proof} An immediate consequence from fact that the multiplication by a smooth function on $\mathcal D^*(\Omega)$ (Definition~\ref{D: The Space D*(Omega)}, \#\ref{Item: Product}) and  differentiation on $\mathcal D^*(\Omega)$ (Definition~\ref{D: The Space D*(Omega)}, \#\ref{Item: Differentiation}) both coincide on $\mathcal E(\Omega)$ with the usual multiplication and usual differentiation; and on  $\mathcal D^\prime(\Omega)$ - with the operations with the same names in the sense of Schwartz theory of distributions (Vladimirov~\cite{vVladimirov}).
\end{proof}
\section{Examples of Regular Operators on $\mathcal D^*(\Omega)$: Solvable Partial Differential Equations}\label{S: Examples of Regular Operators on D*(Omega): Solvable Partial Differential Equations}

Following Todorov~\cite{tdTod95}, Oberguggenberger and Todorov~\cite{MO&TDT} and Oberguggenberger~\linebreak\cite{MO13}, we present several examples of \emph{regular operators} (Definition~\ref{D: Regular Operators}) of the form $P^*(x, \partial): \mathcal D^*(\Omega)\mapsto\mathcal D^*(\Omega)$, $
P^*(x, \partial)=\sum_{|\alpha|\leq m}c_\alpha(x)\partial^\alpha$, with $\mathcal C^\infty$-coefficients $c_\alpha\in\mathcal E(\Omega)$. 
Recall that $P^*(x, \partial)$ is called regular if its co-dual $P(x, \partial)\!: \mathcal D(\Omega)\mapsto\mathcal D(\Omega)$,\, $P(x, \partial)\varphi(x)=\sum_{|\alpha|\leq m}(-1)^{|\alpha|} \partial^\alpha\big(c_\alpha(x)\varphi(x)\big)$, is injective on $\mathcal D(\Omega)$ (Theorem~\ref{T: Properties of D*(Omega)}, \ref{Item: Regular}). Recall as well that the regular operators are exactly the operators for which our main result (Theorem~\ref{T: The Main Result}) holds, i.e. these are \emph{surjective operators on} $\mathcal D^*(\Omega)$.

The restrictions $P^*(x, \partial)\rest\mathcal D(\Omega)$, $P^*(x, \partial)\rest\mathcal E(\Omega)$ and  $P^*(x, \partial)\rest\mathcal D^\prime(\Omega)$ of the regular operators $P^*(x, \partial)$ in our examples below are relatively well-studied in the classical theory of linear partial differential operators (H\"{o}rmander~\cite{lHor-I}-\cite{lHor-IV}). With few exceptions, these restrictions  \emph{are non-surjective}. Thus we arrived at the \emph{main point of our approach} - to extend non-surjective operators from $\mathcal D^\prime(\Omega)$ to surjective operators on $\mathcal D^*(\Omega)$ and thus to guarantee the \emph{existence of solutions for the corresponding partial differential equations} in the framework of $\mathcal D^*(\Omega)$. 

\begin{example}[Constant Coefficients]\label{Ex: Constant Coefficients} The \emph{linear partial differential operators with constant coefficients}: $P^*(\partial): \mathcal D^*(\Omega)\mapsto\mathcal D^*(\Omega)$,  where $P^*(\partial)=\sum_{|\alpha|\leq m}c_\alpha\, \partial^\alpha$,  are \emph{regular}. To show that, we have to show that its co-dual (transposed)  operator $P(\partial): \mathcal D(\Omega)\mapsto\mathcal D(\Omega)$,  $P(\partial)=
	\sum_{|\alpha|\leq m}c_\alpha\, (-1)^{|\alpha|}\, \partial^\alpha$, is injective. Indeed, the Fourier transform applied to $P(\partial)\varphi=0$ leads us to $\big(\sum_{|\alpha|\leq m}c_\alpha\, (iz)^\alpha\big)\mathcal F(\varphi)(z)=0$.  Now, both $\sum_{|\alpha|\leq m}c_\alpha\, (iz)^\alpha$ and $\mathcal F(\varphi)$ are entire functions and thus the function $\sum_{|\alpha|\leq m}c_\alpha\, (iz)^\alpha$ can be cancelled, because the ring of entire functions forms an integral domain (a ring without zero-divisors). The result is $\mathcal F(\varphi)(z)=0$, which implies $\varphi=0$ (as required). Consequently, $P^*(\partial)$ is \emph{surjective} on $\mathcal D^*(\Omega)$, i.e. the equation $P^*(\partial)U=T$ has a solution for $U$ in $\mathcal D^*(\Omega)$ for every choice of $T$ also in $\mathcal D^*(\Omega)$. Recall that the restrictions $P^*(\partial)\rest\mathcal D(\Omega)$, $P^*(\partial)\rest\mathcal E(\Omega)$ and $P^*(\partial)\rest\mathcal D^\prime(\Omega)$ are \emph{not necessarily surjective}. In more detail the situation is as follows:
	
	\begin{itemize}
		\item In the particular case $\Omega=\R^d$  the operators $P^*(\partial)\rest\mathcal D(\R^d)$ \emph{is  neither surjective, nor injective} (think of harmonic functions), but the operator  $P^*(\partial)\rest\mathcal E(\R^d)$ \emph{is surjective}, since every convex open subset of $\R^d$ is $P^*$-convex for supports (H\"{o}rmander~\cite{lHor-II}, Theorem 10.6.2). The operator $P^*(\partial)\rest\mathcal D^\prime(\R^d)$ 
		\emph{is also surjective}: this is the famous existence theorem of Malgrange~\cite{bMal55-56} and Ehrenpreis~\cite{lEhren56}. 
		
		\item If $\Omega\not=\R^d$ however, the operators $P^*(\partial)\rest\mathcal D^\prime(\Omega)$ might be also \emph{non-surjective}. In particular, if $P^*(\partial)\!\!\upharpoonright\!\mathcal D^\prime(\Omega)$ is \emph{non-elliptic}, then there exists  an open set $\Omega$ of $\R^d$ (which is not \emph{$P^*$-convex for supports}) and a smooth function $f\in\mathcal E(\Omega)$ such that the equation $P^*(\partial)U=f$ has \emph{no a distributional solution} for $U$ (H\"{o}rmander~\cite{lHor-II}, Theorem 10.6.6, Corollary 10.6.8, Theorem 10.8.2). By our result (Theorem~\ref{T: The Main Result}) however, $P^*(\partial)U=f$ still does have a solution $U\in\mathcal D^*(\Omega)\setminus\mathcal D^\prime(\Omega)$. In addition, we observe that $P^*(\partial)$ is certainly non-hypoelliptic on $\mathcal D^*(\Omega)$ even if its restriction $P^*(\partial)\rest\mathcal D^\prime(\Omega)$ is hypoelliptic, because the solution  $U$ is non-smooth.
		
		\item For a recent discussion on hypoellipticity we refer to Street~\cite{bStreet18}.
	\end{itemize}
\end{example}
\begin{example}[Elliptic Operators]\label{Ex: Elliptic Operators} All \emph{second order elliptic operators with $\mathcal C^\infty$-coefficients} $P^*(x, \partial): \mathcal D^*(\Omega)\mapsto\mathcal D^*(\Omega)$ are \emph{regular}. This follows from the fact that every test-function $\varphi\in\mathcal D(\Omega)$ vanishes on an open set $\Omega$ of $\R^d$ and thus $P(x, \partial)\varphi=0$ implies $\varphi=0$ by the Cauchy uniqueness property (uniqueness continuation principle), since $P(x, \partial)$ is also a second order elliptic operator with $\mathcal C^\infty$-coefficients. Consequently, $P^*(x, \partial)$ is \emph{surjective} on $\mathcal D^*(\Omega)$, i.e. the equation $P^*(x, \partial)U=T$ has a solution for $U$ in $\mathcal D^*(\Omega)$ for every choice of $T$ also in $\mathcal D^*(\Omega)$. We should mention that the restrictions $P^*(x, \partial)\rest\mathcal 
	D(\Omega)$, $P^*(x, \partial)\rest\mathcal E(\Omega)$ and $P^*(x, \partial)\rest\mathcal D^\prime(\Omega)$ \emph{are not necessarily surjective}. For more details we refer to (H\"{o}rmander~\cite{lHor-I}, 8.5-8.6), (H\"{o}rmander~\cite{lHor-III}, 17.2) and (H\"{o}rmander~\cite{lHor-IV}, 28.1-28.4).
\end{example}
\begin{example}[Analytic Coefficients]\label{Ex: Analytic Coefficients} All \emph{elliptic operators with analytic coefficients} $P^*(x, \partial)\!\!:\mathcal D^*(\Omega)\mapsto\mathcal D^*(\Omega)$ are \emph{regular} since their co-dual (transposed) operators $P(x, \partial)\!\!: \mathcal D(\Omega)\mapsto\mathcal D(\Omega)$ are also \emph{elliptic with analytic coefficients}. More generally, all linear partial differential operators $P^*(x, \partial)\!\!: \mathcal D^*(\Omega)\mapsto\mathcal D^*(\Omega)$  \emph{with analytic coefficients of constant strength} are regular (see the discussion before Theorem 13.5.2 in H\"{o}rmander~\cite{lHor-II}, p.196).  Consequently, $P^*(x, \partial)$ is \emph{surjective} on $\mathcal D^*(\Omega)$, i.e. the equation $P^*(x, \partial)U=T$ has a solution for $U$ in $\mathcal D^*(\Omega)$ for every choice of $T$ also in $\mathcal D^*(\Omega)$. We should note that the restrictions  $P^*(x, \partial)\rest\mathcal D(\Omega)$, $P^*(x, \partial)\rest\mathcal E(\Omega)$ and $P^*(x, \partial)\rest\mathcal D^\prime(\Omega)$ \emph{are not necessarily surjective}.  For the \emph{local solvability} of equations of the form $P^*(x, \partial)U=T$ (which is quite a different phenomenon) and for a more comprehensive discussion on the solvability in general, we refer to Treves~\cite{fTreves70}. 
\end{example}
\begin{example}[Hans Lewy Operator]\label{Ex: Hans Lewy Operator}
	\begin{itemize}
		\item The \emph{Hans Lewy~\cite{hLewy57} operator}:\linebreak 
		$L^*(x, \partial)\!:\mathcal D^*(\R^3)\mapsto\mathcal D^*(\R^3)$, $
		L^*(x, \partial)=\frac{\partial}{\partial x_1}+i \frac{\partial}{\partial x_2}-2i(x_1+ix_2)\frac{\partial}{\partial x_3}$,
		is \emph{regular} by the following reasoning (also due to H. Lewy): First, we verify that $L(x, \partial)=-L^*(x, \partial)$. Next, we observe that the mapping $(x_1, x_2, x_3)\mapsto \big(x_1+ix_2, x_3+i(x_1^2+x_2^2)\big)$ is an embedding of $\R^3$ into $\C^2$, which converts $\R^3$ into the boundary of the domain $D=\{{\rm Im} (z_2) >|z_1|^2\}$. Thus every solution $\varphi\in\mathcal D(\R^3)$ of $L(x, \partial)\varphi=0$ can be extended holomorphically into $D$. Thus $\varphi=0$ by the unique continuation for holomorphic functions. Consequently, $L^*(x, \partial)$ is \emph{surjective} on $\mathcal D^*(\R^3)$, i.e. the equation $L^*(x, \partial)U=T$ has a solution for $U$ in $\mathcal D^*(\R^3)$ for every choice of $T$ also in $\mathcal D^*(\R^3)$. \emph{By contrast}, recall  that the Lewy operator $L^*(x, \partial)\rest\mathcal D(\R^3)$, $L^*(x, \partial)\rest\mathcal E(\R^3)$ and $L^*(x, \partial)\rest\mathcal D^\prime(\R^3)$ are \emph{non-surjective} and the equation $L^*(x, \partial)U=T$ might fail to have solutions $U$ in $\mathcal D^\prime(\R^3)$ even for some test-functions $T\in\mathcal D(\R^3)$ (Lewy~\cite{hLewy57}).
		
		\item With different arguments (due to Oberguggenberger~\cite{MO13}, Remark 20(c), p.18) we show that  the Lewy operator $L^*(x, \partial)\!\!:\mathcal D^*(\Omega)\mapsto\mathcal D^*(\Omega)$ is \emph{regular for arbitrary open set} $\Omega$ of $\R^3$ (not only for $\Omega=\R^3$): We apply the Fourier transform $\mathcal F_3$ - relative to the variable $x_3$ - to $L(x, \partial)\varphi(x_1, x_2, x_3)=0$. The result is $(-\frac{\partial}{\partial x_1}-i \frac{\partial}{\partial x_2}+2(x_1+i x_2)z_3)\psi(x_1, x_2, z_3)=0$, where $\psi(x_1, x_2, z_3)=\mathcal F_3(\varphi)$ is an entire function in $z_3$ (for any fixed $(x_1, x_2)\in\R^2$ such that $(x_1, x_2, x_3)\in\Omega$ for some $x_3\in\R$). Next, we let $x_1+ix_2=z_2$ and the result is $\big(\frac{\partial}{\partial\overline{z_2}}+2z_2z_3\big)\psi(z_2, z_3)=0$. The latter implies that  the function $z_2\mapsto\psi(z_2, 0)$ is analytic and thus (having a compact support) vanishes identically in the variable $z_2$. On the other hand, by differentiating $\big(\frac{\partial}{\partial\overline{z_2}}+2z_2z_3\big)\psi(z_2, z_3)=0$ consecutively and letting $z_3=0$, we get $\frac{\partial^k}{\partial_{z_3}}\psi(z_2, 0)=0$ for all $z_2$ and all $k\in\mathbb N_0$. Thus, $\psi=0$ (since, again, $z_3\mapsto\psi(z_2, z_3)$ is  analytic). The latter implies $\varphi=0$ proving that $L^*(x, \partial)$ is regular. Consequently, $L^*(x, \partial)$ is \emph{surjective} on $\mathcal D^*(\Omega)$, i.e. the equation $L^*(x, \partial)U=T$ has a solution for $U$ in $\mathcal D^*(\Omega)$ for every choice of $T$ also in $\mathcal D^*(\Omega)$ and any open $\Omega$. By contrast,  $L^*(x, \partial)\rest\mathcal D^\prime(\Omega)$ is not necessarily surjective (see above). 
	\end{itemize}
\end{example}

\begin{examples}[Non-Regular Operators]\label{Ex: Non-Regular Operators} Here are several examples of linear partial differential operators with smooth coefficients which are \emph{non-regular}:
	\begin{Ex-enum}
		
		\item Let $\psi\in\mathcal D(\Omega)$. Then the operator $P^*(x, \partial)\!\!: \mathcal D^*(\Omega)\mapsto \mathcal D^*(\Omega)$, $P^*(x, \partial)=\psi\partial^\alpha$ is \emph{non-regular}, because $P(x, \partial)\varphi= (-1)^{|\alpha|}\partial^\alpha(\psi\varphi)$ and thus $P(x, \partial)\varphi=0$ for every 
		$\varphi\in\mathcal D(\Omega)$,  $\varphi\not= 0$, with $\supp(\varphi)\cap\supp(\psi)=\varnothing$.
		
		\item The operator $P^*(x, \partial)\!: \mathcal D^*(\R^2)\mapsto \mathcal D^*(\R^2)$, $P^*(x, \partial)=x_1\frac{\partial}{ \partial x_2} -x_2\frac{\partial}{\partial x_1}$ is \emph{non-regular}, because $P(x, \partial)= - P^*(x, \partial)$ and $P(x, \partial)\varphi(x_1^2+x_2^2)=0$ for every $\varphi\in\mathcal D(\R)$,  $\varphi\not= 0$.
		
		\item For an example of a \emph{non-injective fourth order  elliptic operator $P(x,\partial)\!\!: \mathcal D(\R^3)\mapsto\mathcal D(\R^3)$ with smooth (but non-analytic) coefficients}, we refer to (H\"{o}rmander~\cite{lHor-II}, Theorem 13.6.15). Its dual $P^*(x, \partial)\!\!:  \mathcal D^*(\R^3)\mapsto\mathcal D^*(\R^3)$ gives another \emph{example for a non-regular operator}. 
	\end{Ex-enum}
\end{examples}
\section{Standardization of a Non-Standard Result}\label{S: Standardization of a Non-Standard Result}

\indent Using Hamel bases, we show that the space of generalized distributions $\widehat{\mathcal E}(\Omega)$ introduced in (Todorov~\cite{tdTod95}, \S 2) can be embedded as a $\C$-vector subspace into $\mathcal D^*(\Omega)$ (Example~\ref{Ex: D(Omega) and its Dual}). Because $\widehat{\mathcal E}(\Omega)$ was defined in the framework of non-standard analysis (Robinson~\cite{aRob66}), we look upon $\mathcal D^*(\Omega)$ as a  \emph{standardization} of $\widehat{\mathcal E}(\Omega)$. Actually, our article itself can be viewed as a \emph{standardization} of the article Todorov~\cite{tdTod95}, because the surjectivity of the regular operators was first proved in Todorov~\cite{tdTod95} in the framework of $\mathcal L(\widehat{\mathcal E}(\Omega))$, while the main result of this article (Theorem~\ref{T: The Main Result}) holds within $\mathcal L(\mathcal D^*(\Omega))$. Thus, by replacing $\widehat{\mathcal E}(\Omega)$ by $\mathcal D^*(\Omega)$, our result about the regular operators becomes accessible even for readers without background in non-standard analysis. 

\begin{example}[The Space $\widehat{\mathcal E}(\Omega)$]\label{Ex: The Space hatE(Omega)} Let $^*\mathcal E(\Omega)$ be the non-standard extension of $\mathcal E(\Omega)=\mathcal C^\infty(\Omega)$ in a $\mathfrak{c}_+$-saturated ultra-power non-standard model with set of individuals $\R$ (Example~\ref{Ex:  The Space *E(Omega)|C and its Dual} applied for $\kappa=\mathfrak{c}=\card\, \R$).
	Let $\widehat{\mathcal E}(\Omega)={^*\mathcal E}(\Omega)/J_{\mathcal D(\Omega)}$ be the space 
	of \emph{generalized distributions}, where
	\[
	J_{\mathcal D(\Omega)}=\Big\{f\in {^*\mathcal E}(\Omega): (\forall\varphi\in\mathcal D(\Omega))\big(\int_\Omega f(x)\, {^*\!\varphi}(x)\, dx=0\big)\Big\},
	\]
	and $^*\!\varphi$ is the non-standard extension of $\varphi$.
	\begin{itemize}
		\item Recall that $^*\mathcal E(\Omega)$ is a differential ring and  $^\sigma\!\mathcal E(\Omega)=\{^*\!\varphi: \varphi\in\mathcal E(\Omega)\}$ is a differential subring of $^*\mathcal E(\Omega)$. For our purpose we shall treat $^*\mathcal E(\Omega)$ as a differential module over $\mathcal E(\Omega)$ under the product $\varphi f={^*\!\varphi}\cdot f$, where $\varphi\in\mathcal E(\Omega)$ and  $f\in{^*\mathcal E}(\Omega)$. Here the dot in ${^*\!\varphi}\cdot f$ stands for the point-wise product in $^*\mathcal E(\Omega)$. Consequently, $^*\mathcal E(\Omega)$ is a differential vector space over $\C$ as well, since $\C\subset\mathcal E(\Omega)$. We have (trivially) $\card\,\widehat{\mathcal E}(\Omega)\leq\mathfrak{c}_+$, since $\dim(^*\mathcal E(\Omega))=\card(^*\mathcal E(\Omega))=\mathfrak{c}_+$ (Example~\ref{Ex:  The Space *E(Omega)|C and its Dual} applied for $\kappa=\mathfrak{c}=\card\, \R$). 
		\item We observe that $J_{\mathcal D(\Omega)}$ is a differential $\mathcal E(\Omega)$-submodule of $^*\mathcal E(\Omega)$ and also a differential  $\C$-vector subspace of $^*\mathcal E(\Omega)$ with the property $^\sigma\!\mathcal E(\Omega)\cap J_{\mathcal D(\Omega)}=\{0\}$. Thus the mapping $f\mapsto{^*\!f}+J_{\mathcal D(\Omega)}$ is a $\mathcal E(\Omega)$-module embedding as well as a $\C$-vector space embedding of $\mathcal E(\Omega)$ into $\widehat{\mathcal E}(\Omega)$. Among other things, this embedding implies $\card\,\widehat{\mathcal E}(\Omega)\geq\mathfrak{c}$ and thus $\dim\widehat{\mathcal E}(\Omega)=\card\,\widehat{\mathcal E}(\Omega)$ due to the formula $\card\,\widehat{\mathcal E}(\Omega)=\max\{\dim\widehat{\mathcal E}(\Omega), \mathfrak{c}\}$ (Lemma~\ref{L: Span}).  The inequality $c\leq\dim\, \widehat{\mathcal E}(\Omega)\leq\mathfrak{c}_+$ implies that either $\dim\widehat{\mathcal E}(\Omega)=\mathfrak{c}$, or $ \dim\widehat{\mathcal E}(\Omega)=\mathfrak{c}_+$. In either case $\widehat{\mathcal E}(\Omega)$ can be embedded as a $\C$-vector subspace of $\mathcal D^*(\Omega)$ by Lemma~\ref{L: Subspace Lemma}, since $\dim\mathcal D^*(\Omega)=\mathfrak{c}_+$ (Example~\ref{Ex: D(Omega) and its Dual}).
	\end{itemize} 
\end{example}
\emph{Standardizations of non-standard results} are fascinating and often dramatic events in mathematics:
\begin{itemize}
	\item The most famous example of \emph{standardization} is certainly the creation of the modern calculus which \emph{standardizes} the old \emph{Leibniz-Newton-Euler Infinitesimal Calculus}. Getting rid of  \emph{infinitesimals} and replacing them with \emph{limits} can be described  as nothing less than a real \emph{drama or even revolution in mathematics} (Hall \& Todorov~\cite{HallTodLeibniz11}). The drama took several decades and it resulted - as side products - in the rigorous theory of real numbers used today,  set theory and mathematical logic. For those interested in the history of infinitesimals in the context of the Reformation, Counter-Reformation and English Civil War, we refer to the excellent book by Alexander~\cite{aAlexander2014}.
	\item A more recent example of a \emph{standardization of a non-standard result} gives us the Bernstein-Robinson~\cite{BernsteinRobinson66} solution of the invariant-subspace open problem for polynomially-compact operators on Hilbert space. Here the drama was not missing, either. Paul Halmos~\cite{prHalmos66} - in the role of Editor/Referee - decided to delay the acceptance of Bernstein-Robinson's manuscript for almost a year and meanwhile to translate (after \emph{sweating on} it, by his own words) the submitted manuscript  in the language of standard analysis, in an almost line-by-line correspondence to the original. Eventually, Halmos published his translation side by side with Bernstein-Robinson's article in the same issue of the Pacific Journal of Mathematics. Halmos's \emph{pseudo-standardization} of the Bernstein-Robinson solution (of an open until then problem in mathematics) considerably deflected the expected impact of one of the first and perhaps the most spectacular application of the newly-born non-standard analysis. Bitter feelings about this episode are still lingering in the non-standard community. The \emph{real standardization} of Bernstein-Robinson's solution came almost a decade later from
	Lomonosov~\cite{vLomonosov} in the framework of a more advanced and general operator theory. 
\end{itemize}

\subsection*{Acknowledgements}
The author thanks  Michael Oberguggenberger and Ivan Penkov for reading the earlier versions of the manuscript and for the insightful dialog. The author is also grateful to the anonymous referee for the numerous useful suggestions.

	%
	%
	%
	\bibliographystyle{jloganal}
	\bibliography{bibliography}

\begin{thebibliography}{}
\providecommand\bibmarginpar{\leavevmode\marginpar}
\def\urlstyle#1{{\tt #1}}

\bibitem{aAlexander2014}
\textbf{A Alexander}, \emph{Infinitesimals: How a Dangerous Mathematical Theory
  shaped the Modern Would}, Scientific American/Farrar, Straus and Giroux, New
  York (2014)

\bibitem{sAxlerBook}
\textbf{S Axler}, \href{http://dx.doi.org/10.1007/978-3-319-11080-6}
  {\emph{Linear Algebra Done Right}}, third edition, Undergraduate Texts in
  Mathematics, Springer, Cham (2015) \xox{MR}{3308468}

\bibitem{BernsteinRobinson66}
\textbf{A Bernstein}, \textbf{A Robinson}, \href{http://dx.doi.org/1102994835}
  {\emph{{Solution of an invariant subspace problem of K. T. Smith and P. R.
  Halmos.}}}, Pacific Journal of Mathematics 16 (1966) 421 -- 431
  \xox{MR}{0193504}

\bibitem{ChKeisl}
\textbf{C\,C Chang}, \textbf{H\,J Keisler}, \emph{Model Theory}, third edition,
  Dover Books on Mathematics, Dover Publications, New York (2012)

\bibitem{Chiv&Penkov}
\textbf{A Chirvasitu}, \textbf{I Penkov},
  \href{http://dx.doi.org/10.4310/PAMQ.2017.v13.n1.a3} {\emph{Representation
  categories of Mackey Lie algebras as universal monoidal categories}}, Pure
  and Applied Mathematics Quarterly 13 (2017) 77--121 \xox{MR}{3858015}
  \xox{arXiv}{1710.00976}

\bibitem{jCol85}
\textbf{J\,F Colombeau}, \emph{Elementary Introduction to New Generalized
  Functions}, volume 113 of \emph{North-Holland Mathematical Studies},
  North-Holland Publishing Company, Amsterdam (1985) \xox{MR}{0808961}

\bibitem{ColHeibigMO93}
\textbf{J\,F Colombeau}, \textbf{A Heibig}, \textbf{M Oberguggenberger},
  \emph{Le problem\'e de Cauchy dans un espace de fonctions g\'en\'eralis\'ees
  I}, Comptes Rendus de l'Acad\'emie des Sciences --- Series I --- Mathematics
  317 (1993) 851--855 \xox{MR}{1246652}

\bibitem{mDavis}
\textbf{M Davis}, \emph{Applied Nonstandard Analysis}, Dover Books on
  Mathematics, Dover Publications, New York (2005)

\bibitem{lEhren56}
\textbf{L Ehrenpreis}, \href{http://dx.doi.org/10.2307/2372464}
  {\emph{Solutions of some problems of division III. Division in the spaces,
  $\mathcal{D}'$, $\mathcal{H}$, $\mathcal{Q}_A$, $\mathcal{O}$}}, American
  Journal of Mathematics 78 (1956) 685--715 \xox{MR}{0083690}

\bibitem{HallTodLeibniz11}
\textbf{J\,F Hall}, \textbf{T\,D Todorov}, \emph{Ordered Fields, the Purge of
  Infinitesimals from Mathematics and the Rigorousness of Infinitesimal
  Calculus}, Bulgarian Journal of Physics 42 (2015) 99--127
  \xox{arXiv}{1509.03798}~An issue dedicated to the centenary of the birth of
  Prof. Christo Yankov Christov

\bibitem{prHalmos66}
\textbf{P\,R Halmos}, \emph{Invariant subspaces of polynomially compact
  operators}, Pacific Journal of Mathematics 16 (1966) 433--437
  \xox{MR}{0193505}

\bibitem{gHamel}
\textbf{G Hamel}, \href{http://dx.doi.org/10.1007/BF01457624} {\emph{Eine Basis
  aller Zahlen und die unstetige L{\"o}sungen der Funktional gleichung
  $f(x+y)=f(x)+f(y)$}}, Mathematische Annalen 60 (1905) 459--462
  \xox{MR}{1511317}

\bibitem{HewittStromberg65}
\textbf{E Hewitt}, \textbf{K Stromberg}, \emph{Real and abstract analysis. A
  modern treatment of the theory of functions of a real variable},
  Springer-Verlag, New York (1965) \xox{MR}{0188387}

\bibitem{lHor-II}
\textbf{L H{\"o}rmander}, \href{http://dx.doi.org/10.1007/b138375} {\emph{The
  Analysis of Linear Partial Differential Operators II: \small{Differential
  Operators with Constant Coefficients}}}, Classics in Mathematics,
  Springer-Verlag, Berlin (1983) \xox{MR}{2108588}~Originally published as Vol.
  257 in the series: Die Grundlehren der mathematischen Wissenschaften

\bibitem{lHor-I}
\textbf{L H{\"o}rmander}, \href{http://dx.doi.org/10.1007/978-3-642-61497-2}
  {\emph{The Analysis of Linear Partial Differential Operators I:
  \small{Distribution Theory and Fourier Analysis}}}, Classics in Mathematics,
  Springer-Verlag, Berlin (1990) \xox{MR}{1996773}~Originally published as Vol.
  256 in the series: Die Grundlehren der mathematischen Wissenschaften

\bibitem{lHor-III}
\textbf{L H{\"o}rmander}, \href{http://dx.doi.org/10.1007/978-3-540-49938-1}
  {\emph{The Analysis of Linear Partial Differential Operators III:
  \small{Pseudo-Differential Operators}}}, Classics in Mathematics,
  Springer-Verlag, Berlin (1994) \xox{MR}{2304165}~Originally published as Vol.
  274 in the series: Die Grundlehren der mathematischen Wissenschaften

\bibitem{lHor-IV}
\textbf{L H{\"o}rmander}, \href{http://dx.doi.org/10.1007/978-3-642-00136-9}
  {\emph{The Analysis of Linear Partial Differential Operators IV:
  \small{Fourier Integral Operators}}}, Classics in Mathematics,
  Springer-Verlag, Berlin (1994) \xox{MR}{2512677}~Originally published as Vol.
  276 in the series: Die Grundlehren der mathematischen Wissenschaften

\bibitem{t.w.Hungerford}
\textbf{T\,W Hungerford}, \href{http://dx.doi.org/10.1007/978-1-4612-6101-8}
  {\emph{Algebra}}, volume~73 of \emph{Graduate Texts in Mathematics},
  Springer-Verlag, New York-Berlin (1980) \xox{MR}{0600654}~Reprint of the 1974
  original

\bibitem{nJacobsonLectures}
\textbf{N Jacobson}, \href{http://dx.doi.org/10.1007/978-1-4684-7053-6}
  {\emph{Lectures in Abstract Algebra II. \small{Linear Algebra}}}, volume~31
  of \emph{Graduate Texts in Mathematics}, Springer-Verlag, New York-Berlin
  (1975) \xox{MR}{0369381}~Reprint of the 1953 edition

\bibitem{tjJech}
\textbf{T\,J Jech}, \emph{The Axiom of Choice}, Dover Books on Mathematics,
  Dover Publications, New York (2008)

\bibitem{aKan88}
\textbf{A Kaneko}, \emph{Introduction to Hyperfunctions}, volume~3 of
  \emph{Mathematics and its Applications (Japanese Series)}, Kluwer Academic
  Publishers Group, Tokyo (1988) \xox{MR}{1026013}~Translated from the Japanese
  by Y. Yamamoto

\bibitem{gKotheI}
\textbf{G K{\"o}the}, \href{http://dx.doi.org/10.1007/978-3-662-22555-4}
  {\emph{Topologische Lineare R{\"a}ume, Vol. I}}, volume 107 of \emph{Die
  Grundlehren der mathematischen Wissenschaften}, second edition,
  Springer-Verlag, Berlin-New York (1966) \xox{MR}{0194863}

\bibitem{hLewy57}
\textbf{H Lewy}, \href{http://dx.doi.org/10.2307/1970121} {\emph{An example of
  a smooth linear partial differential equation without solution}}, Annals of
  Mathematics 66 (1957) 155--158 \xox{MR}{0088629}

\bibitem{mjLighthill}
\textbf{M\,J Lighthill}, \href{http://dx.doi.org/10.1017/CBO9781139171427}
  {\emph{Introduction to Fourier analysis and generalised functions}},
  Cambridge University Press, New York (1960) \xox{MR}{0115085}

\bibitem{tLindstrom}
\textbf{T Lindstr{\o}m}, \href{http://dx.doi.org/10.1017/CBO9781139172110.002}
  {\emph{An invitation to nonstandard analysis}}, from: ``Nonstandard Analysis
  and its Applications'', London Mathematical Society Student Texts 10,
  Cambridge University Press, Cambridge (1988)  1--105 \xox{MR}{0971064}

\bibitem{vLomonosov}
\textbf{V\,I Lomonosov}, \href{http://dx.doi.org/10.1007/BF01080698}
  {\emph{Invariant subspaces of the family of operators that commute with a
  completely continuous operator}}, Functional Analysis and Its Applications 7
  (1973) 55--56 \xox{MR}{0420305}

\bibitem{gwMackey45}
\textbf{G\,W Mackey},
  \href{http://dx.doi.org/10.1090/S0002-9947-1945-0012204-1} {\emph{On
  infinite-dimensional linear spaces}}, Transactions of the American
  Mathematical Society 57 (1945) 155--207 \xox{MR}{0012204}

\bibitem{bMal55-56}
\textbf{B Malgrange}, \emph{Existence et approximation des solutions des
  {\'e}quations aux d{\'e}riv{\'e}es partielles et des {\'e}quations de
  convolution}, Annales de l'Institut Fourier 6 (1955--56) 269--354
  \xox{MR}{0086990}

\bibitem{mOber92}
\textbf{M Oberguggenberger}, \emph{Multiplication of Distributions and
  Applications to Partial Differential Equations}, volume 259 of \emph{Pitman
  Research Notes in Mathematics Series}, Longman Scientific \& Technical,
  Harlow (1992) \xox{MR}{1187755}

\bibitem{MO13}
\textbf{M Oberguggenberger}, \emph{On the algebraic dual of
  $\mathcal{D}(\Omega)$} (2013) \xox{arXiv}{1304.2512}

\bibitem{OberRosinger94}
\textbf{M Oberguggenberger}, \textbf{E Rosinger}, \emph{Solution of Continuous
  Nonlinear PDEs through Order Completion}, volume 181 of \emph{North-Holland
  Mathematics Studies}, North-Holland Publishing Company, Amsterdam (1994)
  \xox{MR}{1286940}

\bibitem{MO&TDT}
\textbf{M Oberguggenberger}, \textbf{T\,D Todorov}, \emph{Solving linear
  partial differential equations in $\mathcal D^*$} (1996)~unpublished
  manuscript

\bibitem{aRob66}
\textbf{A Robinson}, \emph{Non-standard Analysis}, North-Holland Publishing
  Company, Amsterdam (1966) \xox{MR}{0205854}

\bibitem{eRosinger90}
\textbf{E\,E Rosinger}, \emph{Non-linear Partial Differential Equations. An
  Algebraic View of Generalized Solutions}, volume 164 of \emph{North-Holland
  Mathematics Studies}, North-Holland Publishing Company, Amsterdam (1990)
  \xox{MR}{1091547}

\bibitem{pSchapira67}
\textbf{P Schapira}, \href{http://dx.doi.org/10.24033/bsmf.1679} {\emph{Une
  {\'e}quation aux d{\'e}riv{\'e}es partielles sans solutions dans l'espace des
  hyperfonctions}}, Comptes Rendus de l'Acad\'emie des Sciences --- Series I
  --- Mathematics 265 (1967) 665--667 \xox{MR}{0221060}

\bibitem{bStreet18}
\textbf{B Street}, \emph{What else about ... hypoellipticity?}, Notices of the
  American Mathematical Society 65 (2018) 424--425 \xox{MR}{3752391}

\bibitem{tdTod95}
\textbf{T Todorov}, \href{http://dx.doi.org/10.1007/978-94-015-8451-7_9}
  {\emph{An Existence Result for a Class of Partial Differential Equations with
  Smooth Coefficients}}, from: ``Advances in Analysis, Probability and
  Mathematical Physics; Contributions to Nonstandard Analysis'', (S Albeverio,
  W\,A\,J Luxemburg, M\,P\,H Wolff, editors), Mathematics and Its Applications
  314, Kluwer Academic Publishers Group, Dordrecht (1995)  107--121
  \xox{MR}{1344701}

\bibitem{fTreves70}
\textbf{F Tr\'eves}, \href{http://dx.doi.org/10.1090/S0002-9904-1970-12443-0}
  {\emph{On Local Solvability of Linear Partial Differential Equations}},
  Bulletin of the American Mathematical Society 76 (1970) 552--571
  \xox{MR}{0257550}

\bibitem{vVladimirov}
\textbf{V\,S Vladimirov}, \emph{Generalized Functions in Mathematical Physics},
  Mir, Moscow (1979) \xox{MR}{0564116}

\bibitem{rsWolf}
\textbf{R\,S Wolf}, \emph{Proof, Logic, and Conjecture: The Mathematician's
  Toolbox}, W.H. Freeman and Company, New York (1998)

\end{thebibliography}

\end{document}